\newcommand{\finalVersion}{}  
  \newcommand{\marker}[1]{\fbox{\rule{0pt}{0.1ex}\textbf{#1}}}
  \newcommand{\markerO}{\marker{Olaf}}
  \newcommand{\markerJ}{\marker{Jan}}
  \newcounter{lookcounter}
  \newcommand{\look}[2][$\diamond$]
             {
               \stepcounter{lookcounter} 
               \marker{#1}
               \footnote[\arabic{lookcounter}]{\marker{$\diamond$} #2}
             }
  \newcommand{\lookO}[1]
             {
               \stepcounter{lookcounter} 
               \markerO
               \footnote[\arabic{lookcounter}]{\markerO #1}
             }
  \newcommand{\lookJ}[1]
             {
               \stepcounter{lookcounter} 
               \markerJ
               \footnote[\arabic{lookcounter}]{\markerJ #1}
             }
  \newcommand{\look}[1]{}%
  \newcommand{\lookO}[1]{}%
  \newcommand{\lookJ}[1]{}%
\newcommand{\comment}[1]{}   
\numberwithin{equation}{section}
\newcounter{myenumi}
\newcommand{\myfont}{\sffamily}
\newcommand{\myparagraph}[1]{\noindent\textbf{\myfont{#1}}}
\newtheoremstyle{mythmstyle}
  {\topsep}
  {\topsep}
  {\itshape}
  {}
  {\bfseries \myfont}
  {.}
  {.5em}
  {}
\newtheoremstyle{mydefstyle}
  {\topsep}
  {\topsep}
  {\normalfont}
  {}
  {\bfseries \myfont}
  {.}
  {.5em}
  {}
\theoremstyle{mythmstyle}       
\newtheorem{theorem}{Theorem}[section]
\newtheorem{proposition}[theorem]{Proposition}
\newtheorem{lemma}[theorem]{Lemma}
\newtheorem{corollary}[theorem]{Corollary}
\newcounter{intro}
\theoremstyle{mydefstyle}        
\newtheorem{definition}[theorem]{Definition}
\newtheorem{remark}[theorem]{Remark}
\newtheorem*{remark*}{Remark}
\let\expandafter\oldproof\csname\string\proof\endcsname
\let\oldendproof\endproof
\renewenvironment{proof}[1][\bfseries\myfont\proofname]{%
  \oldproof[\bfseries \myfont #1]%
}{\oldendproof}
\renewcommand\section{\@startsection{section}{1}%
  \z@{.7\linespacing\@plus\linespacing}{.5\linespacing}%
  {\Large\myfont\bfseries}}
\renewcommand\subsection{\@startsection{subsection}{2}%
  \z@{-.5\linespacing\@plus-.7\linespacing}{.5\linespacing}%
  {\large\myfont\bfseries}}
\renewcommand\subsubsection{\@startsection{subsubsection}{3}%
  \z@{.5\linespacing\@plus.7\linespacing}{-.5em}%
  {\myfont\bfseries}}
\renewenvironment{abstract}{%
  \ifx\maketitle\relax
    \ClassWarning{\@classname}{Abstract should precede
      \protect\maketitle\space in AMS document classes; reported}%
  \fi
  \global\setbox\abstractbox=\vtop \bgroup
    \normalfont\Small
    \list{}{\labelwidth\z@
      \leftmargin3pc \rightmargin\leftmargin
      \listparindent\normalparindent \itemindent\z@
      \parsep\z@ \@plus\p@
      
    }%
    \item[\hskip\labelsep
      \myfont\bfseries
    \abstractname.]%
}{%
  \endlist\egroup
  \ifx\@setabstract\relax \@setabstracta \fi
}
\renewcommand\contentsnamefont{\myfont\bfseries}
\renewcommand\@starttoc[2]{\begingroup
  \setTrue{#1}%
  \par\removelastskip\vskip\z@skip
  \@startsection{}\@M\z@{\linespacing\@plus\linespacing}%
    {.5\linespacing}{
      \contentsnamefont}{#2}%
  \ifx\contentsname#2%
  \else \addcontentsline{toc}{section}{#2}\fi
  \makeatletter
  \@input{\jobname.#1}%
  \if@filesw
    \@xp\newwrite\csname tf@#1\endcsname
    \immediate\@xp\openout\csname tf@#1\endcsname \jobname.#1\relax
  \fi
  \global\@nobreakfalse \endgroup
  \addvspace{32\p@\@plus14\p@}%
  \let\tableofcontents\relax
}
\renewcommand\@settitle{\begin{center}%
  \baselineskip14\p@\relax
    \LARGE
    \bfseries
    \myfont
  \@title
  \end{center}%
}
\renewcommand\@setauthors{%
  \begingroup
  \def\thanks{\protect\thanks@warning}%
  \trivlist
  \centering\footnotesize \@topsep30\p@\relax
  \advance\@topsep by -\baselineskip
  \item\relax
  \author@andify\authors
  \def\\{\protect\linebreak}%
  \large
  \myfont\bfseries\authors
  \ifx\@empty\contribs
  \else
    ,\penalty-3 \space \@setcontribs
    \@closetoccontribs
  \fi
  \endtrivlist
  \normalfont\myfont\@setaddresses
  \endgroup
}
\renewcommand\@setaddresses{\par
  \nobreak \begingroup
\footnotesize
  \def\author##1{\nobreak\addvspace\bigskipamount}%
  \def\\{\unskip, \ignorespaces}%
  \interlinepenalty\@M
  \def\address##1##2{\begingroup
    \par\addvspace\bigskipamount\indent
    \@ifnotempty{##1}{(\ignorespaces##1\unskip) }%
    {
      \ignorespaces##2}\par\endgroup}%
  \def\curraddr##1##2{\begingroup
    \@ifnotempty{##2}{\nobreak\indent\curraddrname
      \@ifnotempty{##1}{, \ignorespaces##1\unskip}\/:\space
      ##2\par}\endgroup}%
  \def\email##1##2{\begingroup
    \@ifnotempty{##2}{\nobreak\indent\emailaddrname
      \@ifnotempty{##1}{, \ignorespaces##1\unskip}\/:\space
      \ttfamily##2\par}\endgroup}%
  \def\urladdr##1##2{\begingroup
    \def~{\char`\~}%
    \@ifnotempty{##2}{\nobreak\indent\urladdrname
      \@ifnotempty{##1}{, \ignorespaces##1\unskip}\/:\space
      \ttfamily##2\par}\endgroup}%
  \addresses
  \endgroup
}
\renewcommand\enddoc@text{\ifx\@empty\@translators \else\@settranslators\fi
}
\renewcommand\@secnumfont{\myfont\bfseries} 
\renewcommand\maketitle{\par
  \@topnum\z@ 
  \@setcopyright
  \thispagestyle{firstpage}
  \ifx\@empty\shortauthors \let\shortauthors\shorttitle
  \else \andify\shortauthors
  \fi
  \@maketitle@hook
  \begingroup
  \@maketitle
  \toks@\@xp{\shortauthors}\@temptokena\@xp{\shorttitle}%
  \toks4{\def\\{ \ignorespaces}}
  \edef\@tempa{%
    \@nx\markboth{\the\toks4
      \@nx\MakeUppercase{\the\toks@}}{\the\@temptokena}}%
  \@tempa
  \endgroup
  \c@footnote\z@
  \@cleartopmattertags
}
\newcommand{\Sec}[1]{Section~\ref{sec:#1}}
\newcommand{\Subsec}[1]{Subsection~\ref{ssec:#1}}
\newcommand{\Fig}[1]{Figure~\ref{fig:#1}}
\newcommand{\Thm}[1]{Theorem~\ref{thm:#1}}
\newcommand{\Lem}[1]{Lemma~\ref{lem:#1}}
\newcommand{\Cor}[1]{Corollary~\ref{cor:#1}}
\newcommand{\CorS}[2]{Corollaries~\ref{cor:#1}--\ref{cor:#2}}
\newcommand{\Prp}[1]{Proposition~\ref{prp:#1}}
\newcommand{\Rem}[1]{Remark~\ref{rem:#1}}
\newcommand{\Def}[1]{Definition~\ref{def:#1}}
\newcommand{\Defs}[2]{Definitions~\ref{def:#1} and~\ref{def:#2}}
\newcommand{\abs}[2][{}]{\lvert{#2}\rvert_{{#1}}}    
\newcommand{\abssqr}[2][{}]{\lvert{#2}\rvert^2_{#1}} 
\newcommand{\bigabs}[2][{}]{\bigl\lvert{#2}\bigr\rvert_{#1}}     
\newcommand{\bigabssqr}[2][{}]{\bigl\lvert{#2}\bigr\rvert^2_{#1}}
\newcommand{\Bigabssqr}[2][{}]{\Bigl\lvert{#2}\Bigr\rvert^2_{#1}}
\newcommand{\normsymb}{\|}
\newcommand{\bignormsymb}[1]{#1\|}
\newcommand{\norm}[2][{}]{\normsymb{#2}\normsymb_{{#1}}}    
\newcommand{\normsqr}[2][{}]{\normsymb{#2}\normsymb^2_{#1}} 
\newcommand{\bignorm}[2][{}]{\bignormsymb{\bigl}{#2}\bignormsymb{\bigr}_{#1}}
\newcommand{\Bignorm}[2][{}]{\bignormsymb{\Bigl}{#2}\bignormsymb{\Bigr}_{#1}}
\newcommand{\iprod}[3][{}]{\langle{#2},{#3}\rangle_{#1}}  
\newcommand{\bigiprod}[3][{}]{\bigl\langle{#2},{#3}\bigr\rangle_{#1}}
\newcommand{\set}[2]{\{ \, #1 \, | \, #2 \, \} }      
\newcommand{\bigset}[2]{\bigl\{ \, #1 \, \bigl|\bigr. \, #2 \, \bigr\} }
\newcommand{\map}[3]{ #1 \colon #2 \longrightarrow #3}    
\newcommand{\bd}  {\partial}          
\newcommand{\restr}[1]{{\restriction}_{#1}} 
\def\XXint#1#2#3{{\setbox0=\hbox{$#1{#2#3}{\int}$}
     \vcenter{\hbox{$#2#3$}}\kern-.5\wd0}}
\def\XXsum#1#2#3{{\setbox0=\hbox{$#1{#2#3}{\int}$}
     \vcenter{\hbox{$#2#3$}}\kern-.60\wd0}}
\newcommand{\card}[1]{\lvert#1\rvert}   
\newcommand{\dd}    {\, \mathrm d}    
\DeclareMathOperator{\dom}    {dom}
\DeclareMathOperator{\ran}    {ran}
\DeclareMathOperator{\id}     {id}   
\DeclareMathOperator{\supp}   {supp}
\newcommand{\specsymb} {\sigma} 
\newcommand{\spec}[2][{}]   {\specsymb_{\mathrm{#1}}(#2)}
\newcommand{\eps}{\varepsilon} 
\renewcommand{\phi}{\varphi}   
\renewcommand{\rho}{\varrho}   
\newcommand{\conj}[1]{\overline {#1}} 
\newcommand{\R}{\mathbb{R}} 
\newcommand{\C}{\mathbb{C}} 
\newcommand{\N}{\mathbb{N}} 
\newcommand{\1}{\mathbbm 1}                    
\newcommand{\e}{\mathrm e}  
\newcommand{\wt}{\widetilde}           
\newcommand{\leCS}{\stackrel{\mathrm{CS}}\le}
\newcommand{\leCY}{\stackrel{\mathrm{CY}}\le}
\newcommand{\geCY}{\stackrel{\mathrm{CY}}\ge}
\newcommand{\HS}{\mathscr H}           
\newcommand{\Contsymb} {\mathsf C}     
\newcommand{\Lsymb}    {\mathsf L}     
\newcommand{\lsymb}    {\ell}          
\newcommand{\Contspace}[1][{}]{\Contsymb^{#1}}     
\newcommand{\Lpspace}[1][p]    {\Lsymb_{#1}}     
\newcommand{\lpspace}[1][p]    {\lsymb_{#1}}     
\newcommand{\Lsqrspace}    {\Lpspace[2]}     
\newcommand{\lsqrspace}    {\lpspace[2]}          
\newcommand{\Cont}[2][{}]{\Contspace[#1]({#2})}
\newcommand{\Lsqr}[2][{}]{\Lsqrspace^{#1}({#2})} 
\newcommand{\lsqr}[2][{}]{\lsqrspace^{#1}({#2})}   
\newcommand{\Dir}{{\mathrm D}}              
\newcommand{\Err}{\mathrm O}
\newcommand{\quadtext}[1]{\quad\text{#1}\quad}
\newcommand{\qquadtext}[1]{\qquad\text{#1}\qquad}
\newcommand{\Sierpinski}{Sierpi\'nski\xspace}
\newcommand{\pcf}{pcf\xspace}
\newcommand{\energy}{\mathcal E}
\newcommand{\eucl}{\mathrm{eucl}}
\newcommand{\conductance}{c}  
\newcommand{\deltaA}{\delta_{\mathrm a}}
\newcommand{\deltaC}{\delta_{\mathrm c}}
\newcommand{\resistMet}{\rho}
\newcommand{\lspace}[1]{\lsymb({#1})}   
\begin{document}

\title[Approximation of fractals by graphs] {Approximation of fractals
  by discrete graphs: norm resolvent and spectral convergence}

\author{Olaf Post}
\address{Fachbereich 4 -- Mathematik,
  Universit\"at Trier,
  54286 Trier, Germany}
\email{olaf.post@uni-trier.de}

\author{Jan Simmer}%
\address{Fachbereich 4 -- Mathematik,
  Universit\"at Trier,
  54286 Trier, Germany}
\email{simmer@uni-trier.de}

\date{\today, \thistime,  \emph{File:} \texttt{\jobname.tex}}

\begin{abstract}
  We show a norm convergence result for the Laplacian on a class of
  \pcf self-similar fractals with arbitrary Borel regular probability
  measure which can be approximated by a sequence of
  finite-dimensional weighted graph Laplacians.

  As a consequence other functions of the Laplacians (heat operator,
  spectral projections etc.) converge as well in operator norm.  One
  also deduces convergence of the spectrum and the eigenfunctions in
  energy norm.
\end{abstract}


\maketitle



%
\section{Introduction}
\label{sec:intro}
%
The aim of this article is to apply the abstract concept of
convergence of energy forms defined on different Hilbert spaces,
developed by the first author in~\cite{post:12,post:06} to the case of
certain self-similar fractals.  More precisely, this concept of
convergence of energy forms is based on \emph{quasi-unitary
  equivalence}, a (quantitative) generalisation of unitary equivalence
and norm resolvent convergence in the sense of \Rem{quasi-uni} (see
\Sec{norm.convergence} for further details).  We also give some
consequences of quasi-unitary equivalence in \CorS{main1}{main2}
below.  Details on self-similar fractals and precise definitions can
be found in~\cite{kigami:01,strichartz:06} going back
to~\cite{kigami:93a,kigami:93b}; see also \Sec{fractals} where we give
an informal presentation.

A \emph{self-similar set} $K \subset \R^d$ is generated by contractive
similarities $\map{F_1,\dots, F_N} {\R^d}{\R^d}$ such that
\begin{equation}
  \label{eq:similarities}
  K = \bigcup_{j=1,\dots,N} F_j(K).
\end{equation}

Our analysis works for a class of \pcf self-similar sets (see
\Def{pcf}) which we call here \emph{fractals approximable by finite
  weighted graphs} (see \Def{renorm}).  For such a fractal $K$ there
is a sequence $(G_m)_{m \in \N_0}$ of nested graphs $G_m=(V_m,E_m)$
(i.e., $V_m \subset V_{m+1} \subset K$) and conductances (i.e., edge
weights) $\conductance_{e,m}>0$ of the edges $e \in E_m$, such that
$V_*=\bigcup_m V_m$ is dense in $K$, (see e.g.\ \Fig{pentagasket5} for
the pentagasket with all five fixed points as boundary vertices)
\begin{figure}[h] 
  \centering
  \includegraphics[width=0.8\textwidth]{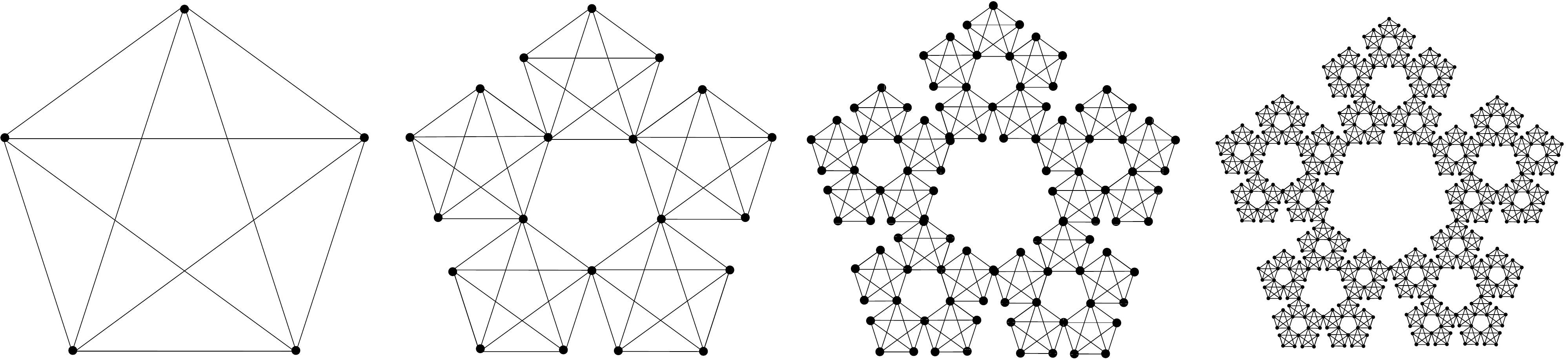}
\caption{The pentagasket approximation graphs $G_m$ starting with all
  five fixed points as boundary $V_0$ for the generations
  $m=0,\dots,3$ with $(\card {V_m})_m=(5, 5\cdot 5-5=20, 5\cdot
  20-5=95, 95 \cdot 5-5 =470, \dots)$.}
  \label{fig:pentagasket5}
\end{figure}
together with a \emph{compatible} and \emph{self-similar} sequence of
graph energies $(\energy_m)_m$ (see
\Defs{compatible}{forms.self-similar}) given by
\begin{equation}
  \label{eq:graph.energy}
  \energy_m(f)
  := \sum_{ e=\{x,y\} \in E_m} \conductance_{e,m} \abssqr{f(x)-f(y)}
\end{equation}
for $\map f {V_m} \C$.

The \emph{compatibility} roughly means that $\energy_m(\phi)$ agrees
with the energy of $\energy_{m+1}(h)$ where $\map h {V_{m+1}} \C$ is
the harmonic extension of $\map \phi {V_m} \C$.  The harmonic
extension $h$ has the property that it minimises the energy
$\energy_m(u)$ among all extensions $\map u {V_{m+1}} \C$ with $u
\restr{V_m}=\phi$, see~\eqref{eq:compatible}.

\emph{Self-similarity} means that $\energy_{m+1}$ can be constructed
from $\energy_m$ together with the renormalisation factors and contractive 
similarities defining the fractal $K$,
see~\eqref{eq:self-similar1}.

The existence of a compatible and self-similar sequence
$(\energy_m)_m$ (also referred to as the \emph{renormalisation
  problem}, see \Rem{renorm.prob}) is not guaranteed for a general
\pcf fractal, but the class of such fractals is sufficiently large to
give an interesting theory.  In particular, the \Sierpinski gasket,
its higher-dimensional analogs, the pentagasket and the hexagasket
(resp.\ any $n$-gasket if $n$ is not divisible by $4$) all belong to
the class of fractals approximable by finite weighted graphs.

The compatibility of $(\energy_m)_m$ guarantees that the limit
\begin{equation*}
  \wt \energy(u) 
  := \lim_{m \to \infty} \energy_m(u \restr {V_m})
\end{equation*}
exists for continuous functions $\map u K \C$ (and may be $\infty$).
The domain of $\wt \energy$ consists of those continuous functions for
which the limit is finite.  We call the non-negative closed quadratic
form $\wt \energy$ the \emph{energy form} of the fractal $K$ (and the
sequence of approximating graphs $(G_m)_m$).

We fix a Borel regular probability measure $\mu$ on $K$ having full
support (e.g.\ the homogeneous and self-similar measure associating
$\mu(F_j(K))=1/N$ to each self-similar image of $K$ or the Kusuoka
energy measure, see \Subsec{ex.measure}).  On each approximating graph
$G_m$, we define a discrete probability measure $\mu_m$, i.e., a
family $(\mu_m(x))_{x \in V_m}$ as the integral of the $m$-harmonic
function with boundary value $1$ at $x \in V_m$ and $0$ at $V_m
\setminus \{x\}$ (see~\eqref{eq:def.mu.m}; for a self-similar measure,
$\mu_m(x)$ decays exponentially in $m$).  We call the family
$(\mu_m)_m$ of probability measures on $G_m$ the \emph{approximating
  measures} corresponding to $(K,\mu)$.

\begin{remark*}
  We would like to stress that the definitions of energy forms and
  harmonic functions on $K$ and $G_m$ do not refer to a measure.  But
  once we are interested in the corresponding operators, eigenvalues or
  convergence results (as explained below), we need to fix a measure
  $\mu$ and choose for $(\mu_m)_m$ the approximating measures
  corresponding to $(K,\mu)$ as above.
\end{remark*}

Let $\wt \HS=\Lsqr{K,\mu}$ and $\HS_m=\lsqr{V_m,\mu_m}$ be the
corresponding Hilbert spaces, respectively.  Denote by $\wt \Delta$
resp.\ $\Delta_m$ the operators on the fractal resp.\ the discrete
graph associated with the energy forms $\wt \energy$ resp.\
$\energy_m$ in $\wt \HS$ resp.\ $\HS_m$.  The graph operator can
actually be represented as a matrix of increasing size, as the graphs
$G_m$ are finite.

For a fractal approximable by finite weighted graphs, the spectrum of
$\wt\Delta$ is purely discrete and hence consists of a sequence of
eigenvalues $(\lambda_k(\wt \Delta))_{k \in \N}$ in increasing order
and repeated according to the multiplicity.

\subsection{Main results}
Our first main result is the statement that the energy forms on
$(G_m,\mu_m)$ and $(K,\mu)$ are $\delta_m$-quasi-unitarily equivalent
with $\delta_m \to 0$ as $m \to \infty$.  The $\delta_m$-quasi-unitary
equivalence is formulated for quadratic forms acting in different
Hilbert spaces and it implies a variant of norm resolvent convergence
for the associated Laplacians.  Our method does not rely on the
monotonicity of the quadratic forms: It is well-known that an
increasing sequence of quadratic forms on a given Hilbert space $\HS$
converges to a limit form (see e.g.~\cite[Thm.~S.14]{reed-simon-1}),
and that the corresponding operators converge in strong resolvent
sense.  Here, we face a problem as the quadratic forms $\energy_m$ act
in $\HS_m$, and they are different for each $m \in \N$.  Moreover,
also the limit space $\wt \HS$ is different, and it is a priori not
clear how to relate these spaces.  We comment on other approaches like
Mosco convergence of forms in \Subsec{prev.works}.

The proof of the following first main result will be given in
\Subsec{approx.fractals.graphs}:
\begin{theorem}
  \label{thm:main}
  Let $(K,\mu)$ be a fractal approximable by finite weighted graphs
  $(G_m,\mu_m)$ with Borel regular probability measure $\mu$ of full
  support $K$.  Then the fractal energy form $\energy_K$ and the
  approximating graph energy forms $\energy_m$ are
  $\delta_m$-quasi-unitarily equivalent on $\wt\HS=\Lsqr{K,\mu}$ and
  $\HS_m=\lsqr{G_m,\mu_m}$, where $\delta_m \to 0$ as $m \to \infty$
  is given precisely in~\eqref{eq:main.delta}.  In particular,
  $\delta_m$ decays exponentially fast.
\end{theorem}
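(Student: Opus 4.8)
The plan is to exhibit explicit identification operators between the discrete spaces $\HS_m=\lsqr{V_m,\mu_m}$ and the fractal space $\wt\HS=\Lsqr{K,\mu}$ and to verify the quantitative estimates entering the definition of $\delta_m$-quasi-unitary equivalence. The decisive map goes from the discrete side to the fractal and is the $m$-harmonic extension: for $\map f{V_m}\C$ set $J_m f\defeq\sum_{x\in V_m} f(x)\,\psi^{(m)}_x$, where $\psi^{(m)}_x$ is the $m$-harmonic hat function equal to $1$ at $x$ and $0$ on $V_m\setminus\{x\}$. A pleasant feature is that the reverse operator is \emph{exactly} the adjoint: since $\mu_m(x)=\int_K\psi^{(m)}_x\dd\mu$ by the very definition of the approximating measure, one computes $\iprod[\wt\HS]{J_m f}{u}=\sum_x f(x)\,\overline{\int_K u\,\psi^{(m)}_x\dd\mu}=\iprod[\HS_m]{f}{J_m^*u}$ with $(J_m^*u)(x)=\mu_m(x)^{-1}\int_K u\,\psi^{(m)}_x\dd\mu$. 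Thus I take $J'_m\defeq J_m^*$, and the quasi-adjointness condition holds with equality. Moreover the hat functions are non-negative with $\sum_x\psi^{(m)}_x\equiv 1$, so Jensen's inequality gives $\abssqr{(J_m f)(x)}\le\sum_y\abssqr{f(y)}\psi^{(m)}_y(x)$ pointwise, and integrating against $\mu$ yields $\norm[\wt\HS]{J_m f}\le\norm[\HS_m]{f}$; this settles the boundedness condition with constant $1$.

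The energetic conditions are almost free. By compatibility and self-similarity of $(\energy_m)_m$ the harmonic extension is energy preserving, $\energy_K(J_m f)=\energy_m(f)$, so $J_m$ may be used as the form-level identification operator from $\HS_m$ into $\dom\energy_K$ with no loss. In the other direction I would take restriction $u\mapsto u\restr{V_m}$ as the form-level map; the energy-minimising (Dirichlet) property of the harmonic extension gives $\energy_m(u\restr{V_m})\le\energy_K(u)$, and combined with the identity above this produces the required form-closeness estimate. Writing $\norm[\wt\HS^1]{u}^2\defeq\energy_K(u)+\norm[\wt\HS]{u}^2$ for the form norm (and analogously $\norm[\HS_m^1]{\cdot}$), it then remains to control the two ``near-identity'' operators $\id-J_m^*J_m$ on $\HS_m^1$ and $\id-J_mJ_m^*$ on $\wt\HS^1$.

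The fractal-side estimate $\norm[\wt\HS]{u-J_mJ_m^*u}\le\delta_m\,\norm[\wt\HS^1]{u}$ is the main obstacle and the source of the exponential rate. Here $J_mJ_m^*u=\sum_x\mu_m(x)^{-1}\bigl(\int_K u\,\psi^{(m)}_x\dd\mu\bigr)\psi^{(m)}_x$ is a projection-like operator onto the space of $m$-harmonic functions, so $u-J_mJ_m^*u$ measures the failure of $u$ to be harmonic at scale $m$. The key input I expect to prove is a cellwise Poincaré-type bound: on each $m$-cell $F_w(K)$, with $w$ a word of length $m$, $\norm[\Lsqr{F_w(K),\mu}]{u-J_mJ_m^*u}^2\lesssim\Lambda_w\,\energy_{F_w}(u)$, where $\energy_{F_w}$ is the energy restricted to the cell and $\Lambda_w$ combines the resistance renormalisation factor with the local $\mu$-mass of the cell. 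The self-similar structure forces $\Lambda_w$ to contract geometrically in the generation $m$, uniformly in $w$, because both the resistance scaling factor and the measure weights shrink under refinement; summing over the cells and using $\sum_w\energy_{F_w}(u)=\energy_K(u)\le\norm[\wt\HS^1]{u}^2$ then gives an exponentially small $\delta_m$. The discrete-side estimate for $\id-J_m^*J_m$ is easier, since that operator is a stochastic averaging (its rows sum to $1$) and the defect is dominated by the oscillation $\energy_m(f)$. Making the contraction of $\Lambda_w$ quantitative — tracking the interplay of the renormalisation factor with the relation between $\mu$ and $\mu_m$ on each cell — is the technical heart; once it is in place, collecting all the bounds into the definition of $\delta_m$-quasi-unitary equivalence with the explicit constant of~\eqref{eq:main.delta} is routine.
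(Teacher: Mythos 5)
Your construction coincides with the paper's in every choice that matters: $J_m$ is harmonic interpolation, $J'_m=J_m^*$ is the $\psi_{x,m}$-weighted average, $J^1=J_m$, and $J^{\prime1}$ is evaluation at the vertices; the boundedness via Jensen, the exact form-closeness via the harmonic-extension identity, and the cellwise resistance (H\"older) estimates are the paper's ingredients as well. But there is a genuine gap in your enumeration of what must be proved. You assert that after the adjointness and energy identities ``it remains to control the two near-identity operators $\id-J_m^*J_m$ and $\id-J_mJ_m^*$''. That is not all that \Def{quasi-uni} requires: condition~\eqref{eq:quasi-uni.c} additionally demands that the form-level operators be $\delta$-compatible with the Hilbert-space-level ones, which for your choice means the estimate $\norm[\lsqr{V_m,\mu_m}]{J'u-J^{\prime1}u}\le\delta\norm[1]{u}$, i.e.\ that the evaluation $u(x)$ is close to the weighted average $\mu_m(x)^{-1}\iprod{u}{\psi_{x,m}}$. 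This is a nontrivial estimate --- in the paper it takes its own argument over the cells $W_{x,m}$ meeting at $x$ and is precisely the source of the factor $\sqrt{N_0}$ in~\eqref{eq:main.delta} --- so without it you neither establish quasi-unitary equivalence as defined nor can you arrive at the claimed constant.

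A secondary, smaller flaw: your proposed cellwise Poincar\'e bound $\normsqr[\Lsqr{K_w}]{u-J_mJ_m^*u}\lesssim\Lambda_w\,\energy_{K_w}(u\restr{K_w})$ cannot hold with only the cell's own energy on the right, because $(J_m^*u)(x)$ at a junction point $x$ averages $u$ over \emph{all} cells containing $x$, so $u-J_mJ_m^*u$ restricted to $K_w$ depends on values of $u$ in neighbouring cells. This is repairable: either allow the energies of adjacent cells on the right-hand side (each cell is then counted a bounded number of times in the sum over $w\in W_m$, so the exponential rate survives up to a combinatorial constant), or do what the paper does --- first estimate $u-J_mJ^{\prime1}u$, the $m$-harmonic interpolant of $u$, which is a purely local (cellwise) object, and then pass from $J^{\prime1}$ to $J'=J_m^*$ via \Lem{quasi-uni.b}. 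Note, however, that this second route again runs through exactly the compatibility estimate~\eqref{eq:quasi-uni.c} that your plan omits, so that estimate is unavoidable either way.
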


The definition of quasi-unitary equivalence needs an
\emph{identification operator}
\begin{equation*}
  \map {J=J_m} {\HS_m= \lsqr{V_m,\mu_m}} {\wt \HS=\Lsqr{K,\mu}}
\end{equation*}
such that the resolvent weighted operator norms fulfil
\begin{subequations}
  \begin{equation}
    \label{eq:que.op}
    \norm{(\id_{\HS_m}-J^*J)(\Delta_m+1)^{-1/2}} \le \delta_m
    \qquadtext{and}
    \norm{(\id_{\wt \HS} - J J^*)(\wt \Delta+1)^{-1/2}} \le \delta_m,
  \end{equation}
  hence tend to $0$.  We usually suppress the dependence of $J$ on $m$
  in the notation; in particular, the second operator norm also
  depends on $m$.  The definition of quasi-unitary equivalence
  (\Def{quasi-uni}) is more elaborated, as it uses also identification
  operators on the form domain level and is expressed entirely in
  terms of the energy forms.  On the operator level,
  $\delta_m$-quasi-unitary equivalence implies that
  \begin{equation}
    \label{eq:que.op'}
    \norm{(\wt \Delta+1)^{-1}J-J(\Delta_m+1)^{-1}} 
    \le \delta_m,
  \end{equation}
\end{subequations}
see~\cite[Prp.~4.4.15]{post:12}.  Once quasi-unitary equivalence and
hence~\eqref{eq:que.op}--\eqref{eq:que.op'} are fulfilled, we can
mimic the proofs of results following from norm resolvent convergence
(which actually is the convergence of the norm~\eqref{eq:que.op'} to
$0$ for $\wt \HS=\HS_m$ and $J=\id$).  These consequences are treated
systematically in~\cite[Ch.~4]{post:12}; we just cite two of them here
(see~\cite[Thm.~4.2.14 and Thm.~4.3.5]{post:12}):

The first consequence states in our setting that a function of the
operator on the fractal is close to the corresponding operator on the
graph sandwiched by $J^*$ and $J$ and vice versa:
\begin{corollary}
  \label{cor:main1}
  Let $\map \eta {[0,\infty)} \C$ be a function continuous in a
  neighbourhood $U$ of $\spec {\wt \Delta}$ such that $\lim_{\lambda
    \to \infty} (\lambda+1)^{1/2} \eta(\lambda)$ exists.  Then there
  is a function $\delta \to \eps(\delta)$ such that $\eps(\delta)\to
  0$ as $\delta\to 0$, depending only on $\eta$ and $U$, and
  \begin{equation*}
    \norm{\eta(\wt \Delta) - J \eta(\Delta_m) J^*}
    \le \eps(\delta_m)
    \quadtext{and}
    \norm{\eta(\Delta_m) - J^* \eta(\wt \Delta) J} 
    \le \eps(\delta_m).
  \end{equation*}
\end{corollary}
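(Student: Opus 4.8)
The plan is to reduce everything to the resolvent estimates \eqref{eq:que.op}--\eqref{eq:que.op'} supplied by quasi-unitary equivalence, by passing to the resolvent through a functional-calculus change of variables and then invoking a Stone--Weierstrass approximation. Write $R_m=(\Delta_m+1)^{-1}$ and $\wt R=(\wt\Delta+1)^{-1}$; both are self-adjoint with spectra in $(0,1]$, so that $\norm{R_m^p}\le 1$ and $\norm{\wt R^p}\le 1$ for every $p\ge 0$. Setting $\wt\eta(s):=\eta(s^{-1}-1)$ one has $\eta(\wt\Delta)=\wt\eta(\wt R)$ and $\eta(\Delta_m)=\wt\eta(R_m)$, and the decay hypothesis that $\lim_{\lambda\to\infty}(\lambda+1)^{1/2}\eta(\lambda)$ exists translates, under $s=(\lambda+1)^{-1}$, into $s^{-1/2}\wt\eta(s)$ having a limit as $s\to 0^+$; hence $\wt\eta$ extends to a continuous function on $[0,1]$ with $\wt\eta(0)=0$ (after fixing any continuous extension of $\eta$ to $[0,\infty)$ compatible with $U$, through which the final bound acquires its dependence on $U$).

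The core is a polynomial building block: for each $k\ge 1$ I claim $\norm{\wt R^k-JR_m^kJ^*}\le Ck\,\delta_m$, where $C$ bounds $\norm J=\norm{J^*}$ uniformly in $m$ (part of \Def{quasi-uni}). This follows by telescoping using two consequences of \eqref{eq:que.op}--\eqref{eq:que.op'}. First, $\norm{\wt R J-JR_m}\le\delta_m$ by \eqref{eq:que.op'} lets me commute each factor $R_m$ through $J$ at a cost $\le C\delta_m$ (using $\norm{R_m^{k-1}J^*}\le C$ and $\norm{\wt R^j}\le 1$), converting $JR_m^kJ^*$ into $\wt R^kJJ^*$ up to a total error $\le kC\delta_m$. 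Second, taking the adjoint of the second estimate in \eqref{eq:que.op} gives $\norm{\wt R^{1/2}(\id-JJ^*)}\le\delta_m$, whence $\norm{\wt R^k(\id-JJ^*)}=\norm{\wt R^{k-1/2}\,\wt R^{1/2}(\id-JJ^*)}\le\delta_m$; combining the two yields the claim. The symmetric estimate $\norm{R_m^k-J^*\wt R^kJ}\le Ck\,\delta_m$ is obtained identically after taking adjoints and using the first estimate in \eqref{eq:que.op}, namely $\norm{R_m^{1/2}(\id-J^*J)}\le\delta_m$ together with $\norm{J^*\wt R-R_mJ^*}\le\delta_m$.

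To conclude, I would argue by approximation. Fix $\kappa>0$ and, by Stone--Weierstrass on $[0,1]$ applied to $\wt\eta$ with $\wt\eta(0)=0$, choose a polynomial $p(s)=\sum_{k=1}^n a_k s^k$ (no constant term) with $\norm[\infty]{\wt\eta-p}<\kappa$. Since $\spec{\wt R},\spec{R_m}\subset[0,1]$, the spectral theorem gives $\norm{\wt\eta(\wt R)-p(\wt R)}\le\kappa$ and $\norm{\wt\eta(R_m)-p(R_m)}\le\kappa$, while summing the building block yields $\norm{p(\wt R)-Jp(R_m)J^*}\le\bigl(C\sum_{k}k\,\card{a_k}\bigr)\delta_m=:C(p)\,\delta_m$. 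A triangle inequality then gives
\[
  \norm{\eta(\wt\Delta)-J\eta(\Delta_m)J^*}\le(1+C^2)\kappa+C(p)\,\delta_m,
\]
and defining $\eps(\delta):=\inf_{\kappa>0}\bigl[(1+C^2)\kappa+C(p_\kappa)\,\delta\bigr]$ one gets $\eps(\delta)\to 0$ as $\delta\to 0$ (for fixed $\kappa$ the right-hand side tends to $(1+C^2)\kappa$, and $\kappa$ is arbitrary), with $\eps$ depending only on $\eta$ and $U$; the second asserted inequality follows the same way from the symmetric building block. The main obstacle is exactly this interplay: \eqref{eq:que.op} controls $\id-J^*J$ and $\id-JJ^*$ only after a smoothing factor $(\Delta+1)^{-1/2}$, which is why the decay hypothesis—forcing $\wt\eta(0)=0$ and hence polynomials vanishing at the origin—is indispensable, and why the limits must be ordered so that the degree-dependent constant $C(p_\kappa)$ is frozen before $\delta_m\to 0$.
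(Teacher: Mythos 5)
Your core mechanism --- passing to the resolvents, telescoping powers of them via \eqref{eq:que.op}--\eqref{eq:que.op'}, and then Stone--Weierstrass with polynomials vanishing at the origin --- is sound, and it is essentially the skeleton of the argument in the reference the paper cites for this corollary \cite[Ch.~4]{post:12}. As written, however, your proof establishes the statement only for $\eta$ continuous on \emph{all} of $[0,\infty)$; the passage to the stated generality contains a genuine gap.

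The corollary assumes $\eta$ continuous merely in a neighbourhood $U$ of $\spec{\wt\Delta}$, and this generality is the whole point: it is what makes \Cor{main1a} (with $\eta=\1_I$, discontinuous at $\bd I$) a special case of \Cor{main1}. Your reduction replaces $\eta$ by a continuous extension $\eta_{\mathrm{ext}}$ agreeing with $\eta$ on $U$ and then asserts $\eta(\Delta_m)=\wt\eta(R_m)$ with $\wt\eta$ built from $\eta_{\mathrm{ext}}$. That identity is false whenever $\Delta_m$ has spectrum in the region where $\eta\ne\eta_{\mathrm{ext}}$, i.e.\ in $[0,\infty)\setminus U$ below the high-energy regime, and nothing in your argument excludes this: the operator $\eta(\Delta_m)$ is computed from the actual values of $\eta$ at the eigenvalues of $\Delta_m$. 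Concretely, take $\eta(\lambda)=(\lambda+1)^{-1}$ for $\lambda\notin U$ and $\eta(\lambda)=0$ for $\lambda\in U$. This $\eta$ is continuous at every point of the open set $U$ and satisfies the decay hypothesis; moreover $\eta(\wt\Delta)=0$ and one may take $\eta_{\mathrm{ext}}=0$, so your proof returns the vacuous bound $0\le\eps(\delta_m)$, whereas the corollary asserts the nontrivial bounds $\norm{J\eta(\Delta_m)J^*}\le\eps(\delta_m)$ and --- unsandwiched --- $\norm{\eta(\Delta_m)}\le\eps(\delta_m)$. The latter says precisely that $\Delta_m$ has no low-lying eigenvalues outside $U$ once $m$ is large, a spectral-inclusion statement that your proof never touches (and which, in the paper's logic, is a \emph{consequence} of this corollary, so it cannot simply be presupposed).

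The missing ingredient is spectral localisation, and it can be supplied on top of what you proved. For the sandwiched estimate: given a compact set $B\subset[0,\Lambda]\setminus U$, choose a continuous, compactly supported $\chi$ with $\1_B\le\chi\le 1$ and $\chi=0$ on $\spec{\wt\Delta}$ (Urysohn; $B$ and $\spec{\wt\Delta}$ are disjoint closed sets). Your result for globally continuous functions applied to $\chi$ gives $\norm{J\chi(\Delta_m)J^*}=\norm{\chi(\wt\Delta)-J\chi(\Delta_m)J^*}\le\eps_\chi(\delta_m)$, and then $0\le J\1_B(\Delta_m)J^*\le J\chi(\Delta_m)J^*$ together with the $C^*$-identity $\normsqr{J\1_B(\Delta_m)}=\norm{J\1_B(\Delta_m)J^*}$ yields $\norm{J\1_B(\Delta_m)}\le\eps_\chi(\delta_m)^{1/2}$, which controls $\norm{J(\eta-\eta_{\mathrm{ext}})(\Delta_m)J^*}$ on $[0,\Lambda]$. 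For the unsandwiched estimate you need the genuine inclusion $\spec{\Delta_m}\cap[0,\Lambda]\subset U$ for $m$ large: if $\Delta_m f=\mu f$ with $\norm f=1$ and $\mu\le\Lambda$, then \eqref{eq:quasi-uni.b} bounds $\norm{Jf}$ from below (since $\norm[1]f=(1+\mu)^{1/2}\le(1+\Lambda)^{1/2}$), while \eqref{eq:que.op'} makes $Jf$ an approximate eigenvector of $(\wt\Delta+1)^{-1}$, forcing the distance of $\mu$ to $\spec{\wt\Delta}$ to be at most $C_\Lambda\delta_m$. Combining these two localisation facts with the decay hypothesis --- which holds on all of $[0,\infty)$ and hence controls $\eta$ at high energies both inside and outside $U$ --- closes both inequalities; without them, your argument covers only globally continuous $\eta$ and in particular cannot feed \Cor{main1a}.
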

For a certain class of functions $\eta$, we can actually show that
$\eps$ is Lipschitz, i.e., that $\eps(\delta)=C_{\eta,U} \delta$.  For
example, $\eta(\lambda)=\e^{-t\lambda}$ or $\eta=\1_I$ with $\bd I
\cap \spec {\wt\Delta} = \emptyset$ belong to this class.  In particular,
\Cor{main1} then states the norm convergence of the approximating heat
operators on the discrete graphs to the heat operators on the fractal.
If $\eta=\1_I$ then we have the following statement about the spectral
projections:
\begin{corollary}
  \label{cor:main1a}
  Let $I$ be an interval in $\R$ such that $\bd I \cap \spec {\wt
    \Delta} = \emptyset$.  Then the spectral projections converge in
  operator norm, i.e.,
  \begin{equation*}
    \norm{\1_I(\wt \Delta) - J \1_I(\Delta_m) J^*} \le C_I \delta_m
  \end{equation*}
  where the constant $C_I$ depends only on $I$ and the distance of
  $\bd I$ to $\spec{\wt \Delta}$.
\end{corollary}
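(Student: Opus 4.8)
The plan is to deduce the statement from \Cor{main1} by showing that, for the particular choice $\eta=\1_I$, the abstract modulus $\eps$ can be taken linear, i.e.\ $\eps(\delta)=C_I\delta$. Since $\bd I\cap\spec{\wt\Delta}=\emptyset$ and $\spec{\wt\Delta}$ is discrete, there is $d>0$ with $\operatorname{dist}(\bd I,\spec{\wt\Delta})\ge 2d$; hence $\1_I$ is locally constant, in particular continuous, on the neighbourhood $U=\set{\lambda}{\operatorname{dist}(\lambda,\spec{\wt\Delta})<d}$ of the spectrum. As $\spec{\wt\Delta}\subset[0,\infty)$, we may assume $I$ is bounded above, so that $\1_I$ vanishes for large $\lambda$ and $(\lambda+1)^{1/2}\1_I(\lambda)\to 0$; the hypotheses of \Cor{main1} are then met and already give $\norm{\1_I(\wt\Delta)-J\1_I(\Delta_m)J^*}\le\eps(\delta_m)$. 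It remains to upgrade $\eps(\delta_m)$ to $C_I\delta_m$, and this is where the argument has content.

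First I would represent both spectral projections as Riesz (contour) integrals of the resolvents. Let $\gamma\subset\C$ be a positively oriented contour, for instance the boundary of a rectangle, that separates $\spec{\wt\Delta}\cap I$ from the remainder of $\spec{\wt\Delta}$ and keeps distance at least $d$ from $\spec{\wt\Delta}$; since $I$ is bounded above the enclosed part of the spectrum is finite and $\gamma$ has finite length. Because quasi-unitary equivalence implies convergence of the spectra (a consequence of \eqref{eq:que.op}--\eqref{eq:que.op'} treated systematically in~\cite[Ch.~4]{post:12}), for all sufficiently large $m$ no eigenvalue of $\Delta_m$ lies within distance $d/2$ of $\bd I$, so the same contour also satisfies $\operatorname{dist}(\gamma,\spec{\Delta_m})\ge d/2$ and separates $\spec{\Delta_m}$ in the same way. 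Consequently
\begin{equation*}
  \1_I(\wt\Delta)-J\1_I(\Delta_m)J^*
  =\frac{1}{2\pi\im}\oint_\gamma
     \bigl[(\wt\Delta-z)^{-1}-J(\Delta_m-z)^{-1}J^*\bigr]\dd z .
\end{equation*}

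The core estimate is therefore a bound on the integrand,
\begin{equation*}
  \norm{(\wt\Delta-z)^{-1}-J(\Delta_m-z)^{-1}J^*}\le C(z)\,\delta_m ,
\end{equation*}
uniform for $z\in\gamma$. I would derive it from the resolvent estimate \eqref{eq:que.op'} at the reference point $-1$ via the first resolvent identity $(\wt\Delta-z)^{-1}=(\wt\Delta+1)^{-1}+(1+z)(\wt\Delta+1)^{-1}(\wt\Delta-z)^{-1}$, and likewise for $\Delta_m$, which reduces the difference to $(\wt\Delta+1)^{-1}J-J(\Delta_m+1)^{-1}$ multiplied by the bounded factors $(\wt\Delta+1)(\wt\Delta-z)^{-1}$ and $(\Delta_m+1)(\Delta_m-z)^{-1}$, together with correction terms $(\id_{\wt\HS}-JJ^*)(\wt\Delta+1)^{-1/2}$ and $(\id_{\HS_m}-J^*J)(\Delta_m+1)^{-1/2}$ arising from the conjugation by the non-unitary $J$. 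The first factor is bounded by $\delta_m$ through \eqref{eq:que.op'}, the two correction terms by $\delta_m$ through \eqref{eq:que.op}, and the algebraic factors have norms controlled by $\operatorname{dist}(z,\spec{\wt\Delta})^{-1}$ and $\operatorname{dist}(z,\spec{\Delta_m})^{-1}$. Since $z$ ranges over the fixed compact contour $\gamma$ at distance $\ge d/2$ from both spectra, $C(z)$ is bounded by a constant depending only on $d$ and the position of $I$, and integrating over the finite-length $\gamma$ yields $\norm{\1_I(\wt\Delta)-J\1_I(\Delta_m)J^*}\le C_I\delta_m$ with $C_I$ depending only on $I$ and $\operatorname{dist}(\bd I,\spec{\wt\Delta})$.

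The main obstacle I anticipate is precisely the passage from the single resolvent estimate at $-1$ to the uniform estimate along $\gamma$ in the presence of the non-unitary $J$: one must track the correction terms produced by $\id_{\wt\HS}-JJ^*$ and $\id_{\HS_m}-J^*J$ and verify that the factors $(\wt\Delta+1)(\wt\Delta-z)^{-1}$ and $(\Delta_m+1)(\Delta_m-z)^{-1}$ stay uniformly bounded on $\gamma$ — which they do exactly because $\gamma$ keeps a fixed positive distance from both spectra, the bound for $\spec{\Delta_m}$ being available only for large $m$ via spectral convergence. For the finitely many small $m$ the asserted inequality holds trivially after enlarging $C_I$, since the left-hand side never exceeds a fixed bound while $\delta_m$ is bounded below on any finite range of $m$.
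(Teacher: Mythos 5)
Your proposal is correct in substance, and it is worth recording how it relates to the paper: the paper contains no written proof of \Cor{main1a} at all --- the corollary is quoted as an instance of the abstract theory of~\cite[Ch.~4]{post:12} (see \Prp{que-conseq}, where $\eta=\1_D$ with $\bd D\cap\spec{\wt\Delta}=\emptyset$ is listed among the functions admitting a linear modulus $\eps(\delta)=C\delta$). What you wrote is essentially a reconstruction of the argument behind that citation: Riesz contour representation of both spectral projections, the splitting of $(\wt\Delta-z)^{-1}-J(\Delta_m-z)^{-1}J^*$ into the correction term $(\wt\Delta-z)^{-1}(\id_{\wt\HS}-JJ^*)$ (controlled by \eqref{eq:que.op} after inserting $(\wt\Delta+1)^{1/2}(\wt\Delta-\bar z)^{-1}$, which is bounded uniformly on the contour) and the commutator-type term, which factorises exactly as
\begin{equation*}
  (\wt\Delta-z)^{-1}J-J(\Delta_m-z)^{-1}
  =\bigl[(\wt\Delta+1)(\wt\Delta-z)^{-1}\bigr]
   \bigl[(\wt\Delta+1)^{-1}J-J(\Delta_m+1)^{-1}\bigr]
   \bigl[(\Delta_m+1)(\Delta_m-z)^{-1}\bigr],
\end{equation*}
so that \eqref{eq:que.op'} gives the factor $\delta_m$ and the outer factors are bounded by $1+\abs{1+z}/\operatorname{dist}(z,\specsymb)$. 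One caution on your appeal to ``convergence of the spectra'' for the contour separation: to avoid circularity (in~\cite{post:12} spectral convergence and projection convergence are intertwined), justify the absence of eigenvalues of $\Delta_m$ near $\bd I$ by the min-max eigenvalue convergence of \Cor{main2} or \Thm{ew.direct}, which is proved independently of spectral projections; alternatively, derive the exclusion directly from \eqref{eq:que.op}--\eqref{eq:que.op'} applied to a normalised eigenvector of $\Delta_m$ with eigenvalue near the contour.

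Two precision issues remain. First, ``we may assume $I$ is bounded above'' is not a harmless normalisation: for $I=[a,\infty)$ the asserted inequality is actually \emph{false}, since $\1_I(\wt\Delta)$ is then a projection of infinite rank (the spectrum of $\wt\Delta$ is discrete and unbounded) while $J\1_I(\Delta_m)J^*$ has rank at most $\dim\HS_m<\infty$, so the norm of the difference is at least $1$ and cannot be $\le C_I\delta_m$ once $\delta_m$ is small. Non-negativity of the spectrum only allows truncating $I$ from below; boundedness above is an implicit hypothesis of the corollary, forced by the condition in \Cor{main1} that $\lim_{\lambda\to\infty}(\lambda+1)^{1/2}\eta(\lambda)$ exist. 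Second, disposing of the finitely many small $m$ by enlarging $C_I$ makes the constant depend on the particular sequence $(\delta_m)_m$, whereas the statement claims dependence only on $I$ and $\operatorname{dist}(\bd I,\spec{\wt\Delta})$; the cleaner dichotomy is on the size of $\delta_m$ relative to that distance (contour argument when $\delta_m$ is small, trivial norm bound --- available since \eqref{eq:quasi-uni.a} holds here with $\delta=0$, so $\norm J\le 1$ --- when it is not), which keeps $C_I$ of the asserted form.
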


As in the case of usual operator norm convergence, the operator norm
convergence of spectral projections as in \Cor{main1a} also implies
the convergence of eigenvalues (we give a direct proof in
\Thm{ew.direct} using the min-max characterisation of eigenvalues in
\Prp{q-u-e.ew}, under slightly stronger assumptions).  Such an
eigenvalue convergence result is known as folklore, at least for
fractals with self-similar measure, and where the spectral decimation
method is available (see e.g.~\cite{fukushima-shima:92}), but seems to
be new in other cases or if the measure is not self-similar.
\begin{corollary}
  \label{cor:main2}
  Let $\lambda_k(\Delta_m)$ resp.\ $\lambda_k(\wt \Delta)$ be the
  $k$-th eigenvalue of $\Delta_m$ resp.\ $\wt \Delta$ (in increasing
  order, repeated according to multiplicity).  Then
  \begin{equation*}
    \bigabs{\lambda_k(\Delta_m)-\lambda_k(\wt \Delta)}
    \le C_k \delta_m
  \end{equation*}
  for all $m \in \N$ such that $\dim \HS_m \ge k$, where $C_k$ depends
  only on $\lambda_k(\wt \Delta)$ and where $\delta_m \to 0$ is the
  same as in \Thm{main}, i.e., exponentially decaying.
\end{corollary}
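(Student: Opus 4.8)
The plan is to reduce the statement to an eigenvalue estimate for the \emph{bounded} resolvents and to exploit the operator-norm bound~\eqref{eq:que.op'} together with the two quasi-unitarity defects in~\eqref{eq:que.op}. Write $R=(\wt\Delta+1)^{-1}$ and $R_m=(\Delta_m+1)^{-1}$. Since the spectrum of $\wt\Delta$ is purely discrete, $R$ is a compact, non-negative, self-adjoint operator of norm $\le1$ whose positive eigenvalues, listed in decreasing order and repeated by multiplicity, are $\mu_k=(\lambda_k(\wt\Delta)+1)^{-1}$; likewise $R_m$ is a finite non-negative matrix with eigenvalues $\mu_k^{(m)}=(\lambda_k(\Delta_m)+1)^{-1}$. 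As $\lambda\mapsto(\lambda+1)^{-1}$ is a decreasing bijection, the ordering is preserved, and it suffices to prove $\lvert\mu_k-\mu_k^{(m)}\rvert\le C\delta_m$ with $C$ depending only on $\lambda_k(\wt\Delta)$. For these bounded operators we invoke the Courant--Fischer max--min principle for the top eigenvalues,
\[
  \mu_k=\max_{\dim D=k}\ \min_{0\neq u\in D}\frac{\langle Ru,u\rangle}{\norm{u}^2},
\]
and the analogous formula for $\mu_k^{(m)}$ (this is the content of \Prp{q-u-e.ew}).

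Next I would transport the optimal subspaces across $J$. Let $D\subset\wt\HS$ be the span of the first $k$ eigenfunctions of $\wt\Delta$, so that $D\subset\dom\wt\Delta$ and $\norm{(\wt\Delta+1)^{1/2}u}\le(\lambda_k(\wt\Delta)+1)^{1/2}\norm{u}$ for $u\in D$. Writing $u=(\wt\Delta+1)^{-1/2}v$ and applying the second bound in~\eqref{eq:que.op} gives $\norm{(\id_{\wt\HS}-JJ^*)u}\le(\lambda_k(\wt\Delta)+1)^{1/2}\delta_m\norm{u}$, so $\norm{J^*u}^2=\norm{u}^2\bigl(1+O(\delta_m)\bigr)$ uniformly on $D$; in particular $J^*$ is injective on $D$ for $m$ large and $\dim J^*D=k$. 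To compare Rayleigh quotients I take adjoints in~\eqref{eq:que.op'} to get $\norm{R_mJ^*-J^*R}\le\delta_m$, so $R_mJ^*u=J^*Ru+O(\delta_m\norm{u})$, whence
\[
  \langle R_mJ^*u,J^*u\rangle
  =\langle JJ^*Ru,u\rangle+O(\delta_m\norm{u}^2)
  =\langle Ru,u\rangle+O(\delta_m\norm{u}^2),
\]
the last step again from $(\id_{\wt\HS}-JJ^*)Ru=(\id_{\wt\HS}-JJ^*)(\wt\Delta+1)^{-1/2}(\wt\Delta+1)^{-1/2}u$ and~\eqref{eq:que.op}. Dividing by $\norm{J^*u}^2=\norm{u}^2(1+O(\delta_m))$ and minimising over $u\in D$, the max--min principle yields $\mu_k^{(m)}\ge\mu_k-C\delta_m$. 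The reverse inequality $\mu_k\ge\mu_k^{(m)}-C\delta_m$ is obtained symmetrically, transporting the top-$k$ eigenspace of $R_m$ by $J$ and using the first bound in~\eqref{eq:que.op}; the inequality just proved already guarantees $\mu_k^{(m)}\ge\mu_k/2$ for $m$ large, so the $\Delta_m$-energies involved stay bounded in terms of $\lambda_k(\wt\Delta)$ and all constants depend only on $\lambda_k(\wt\Delta)$.

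Finally I would translate back: from $\lvert\mu_k-\mu_k^{(m)}\rvert\le C\delta_m$ and $\lambda_k(\wt\Delta)=\mu_k^{-1}-1$, $\lambda_k(\Delta_m)=(\mu_k^{(m)})^{-1}-1$,
\[
  \bigabs{\lambda_k(\Delta_m)-\lambda_k(\wt\Delta)}
  =\frac{\lvert\mu_k-\mu_k^{(m)}\rvert}{\mu_k\,\mu_k^{(m)}}
  \le\frac{C\delta_m}{\mu_k\,\mu_k^{(m)}}
  \le C\,(\lambda_k(\wt\Delta)+1)^2\,\delta_m,
\]
using $\mu_k^{(m)}\ge\mu_k/2$ for $m$ large; the finitely many remaining $m$ with $\dim\HS_m\ge k$ are absorbed into the constant. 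This is precisely the claimed bound with $C_k=C\,(\lambda_k(\wt\Delta)+1)^2$ depending only on $\lambda_k(\wt\Delta)$.

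The main obstacle is the bookkeeping of dimensions and constants rather than any single hard estimate: one must ensure that $J^*$ (resp.\ $J$) does not collapse the optimal $k$-dimensional subspace, and that every error term is controlled by the \emph{energy} of the test vectors, which on the relevant eigenspaces is comparable to $\lambda_k(\wt\Delta)$. Both points hinge on the fact that the quasi-unitarity defects in~\eqref{eq:que.op} are measured against $(\,\cdot\,+1)^{-1/2}$, i.e.\ that $J$ is nearly isometric precisely on low-energy states. Alternatively, one can bypass the max--min computation and deduce eigenvalue convergence from the norm convergence of spectral projections in \Cor{main1a}: comparing $\1_{(-\infty,\lambda]}(\wt\Delta)$ with $J\1_{(-\infty,\lambda]}(\Delta_m)J^*$ for $\lambda\notin\spec{\wt\Delta}$ forces the eigenvalue counting functions to agree for $m$ large, and a short interval argument around $\lambda_k(\wt\Delta)$ upgrades this to the quantitative rate $C_k\delta_m$.
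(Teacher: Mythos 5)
Your argument is correct in its essentials, but it takes a genuinely different route from the paper. The paper never proves \Cor{main2} by a bare-hands computation: it quotes the abstract theory of quasi-unitary equivalence (\cite[Thm.~4.3.5]{post:12}), remarking that operator-norm convergence of spectral projections (\Cor{main1a}) forces eigenvalue convergence --- which is exactly the alternative you sketch in your closing paragraph --- and it supplies a sharper \emph{direct} proof in \Thm{ew.direct}. That direct proof is also min-max based, but at the level of the \emph{forms}: via \Prps{ew.comp}{q-u-e.ew} it uses the concrete identification operators $J^1$ (harmonic extension) and $J^{\prime 1}$ (restriction to $V_m$), for which $J^{\prime 1}J^1=\id$ holds exactly and $\wt\energy(J^1 f)=\energy_m(f)$ by energy minimality of harmonic extension; this structure produces multiplicative bounds of the form $1+\Err(\delta_m)$ in~\eqref{eq:ew.direct1}. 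You instead work at the resolvent level --- Courant--Fischer for $R=(\wt\Delta+1)^{-1}$ and $R_m=(\Delta_m+1)^{-1}$, transporting the extremal $k$-dimensional spectral subspaces by $J^*$ resp.\ $J$ --- and use only the black-box estimates \eqref{eq:que.op} and \eqref{eq:que.op'}. What your route buys is self-containedness and generality: it applies verbatim to any pair of quasi-unitarily equivalent forms with purely discrete spectra, with no fractal-specific input; what it loses is sharpness, giving additive errors of order $(\lambda_k(\wt\Delta)+1)^2\delta_m$ rather than the paper's relative errors. (Your parenthetical attribution of the max--min formula to \Prp{q-u-e.ew} is off --- that proposition is the eigenvalue comparison result, not Courant--Fischer --- but this is harmless.)

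Two caveats, largely shared by the paper's own \Thm{ew.direct}, which is stated only for $m\ge m_0$: your argument requires $\delta_m$ small relative to $(\lambda_k(\wt\Delta)+1)^{-1}$ so that $J^*$ does not collapse the test subspace, and absorbing the finitely many remaining $m$ into the constant, as you propose, makes $C_k$ depend also on the graph eigenvalues $\lambda_k(\Delta_m)$ at those levels, not only on $\lambda_k(\wt\Delta)$ as the statement demands.
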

In the case of purely discrete spectrum or the case of isolated
eigenvalues, we can approximate an eigenfunction also in \emph{energy
  norm} by a sequence of finite dimensional eigenvectors: namely, for
an isolated eigenvalue $\wt \lambda$ with normalised eigenfunction
$\wt \Phi$, there is a sequence $(\Phi_m)_m$ of normalised functions
consisting of a linear combination of eigenfunctions with eigenvalues
close to $\wt \lambda$ such that
\begin{equation}
  \label{eq:ef.conv}
  \norm[\dom \wt \energy]{J \Phi_m - \wt \Phi} \le C_{\wt \lambda} \delta_m
\end{equation}
where $C_{\wt \lambda}$ depends only on $\wt \lambda$.  Here,
$\normsqr[\dom \wt \energy] u = \normsqr[\Lsqr{K,\mu}]u +\wt
\energy(u)$ denotes the energy norm.  It follows that the convergence
also holds with respect to the original Hilbert space norm of $\wt
\HS$.  We assume here tacitly that the range of $J$ lies in $\dom \wt
\energy$; the more general case is treated in \Prp{conv.ef}.

\begin{remark}
  \label{rem:fractafold}
  We would like to stress that a result similar to \Thm{main} holds
  also for non-compact spaces such as non-compact \emph{fractafolds}.
  A \emph{fractafold} is a space which is locally homeomorphic to a
  given \pcf self-similar fractal $K$ (see e.g.~\cite{strichartz:03}).
  A non-compact example is given by $X=\bigcup_{m \in \N} F^{-m}(K)$
  for some \pcf self-similar fractal $K$ with IFS $F$, where
  $F^{-m}(K)=\bigcup_{w \in W_m} F_w^{-1}(K)$ for $m \ge 0$ (with the
  convention that $F_w^{-1}$ is the inverse of $F_w$).  We also let
  $G_m$ be the associated $m$-level graph approximation (here, the
  vertices of $G_m$ are included in the vertices of $G_{m+1}$, and
  each graph $G_m$ is infinite).  In particular, it follows from the
  abstract theory of~\cite[Sec.~4.3]{post:12} that the entire spectrum
  and the essential spectrum converge.  Moreover, we conclude that
  isolated eigenvalues and their eigenfunctions converge in the sense
  of~\eqref{eq:ef.conv} (see again \Prp{conv.ef} for the precise
  statement).
\end{remark}

\subsection{Previous works and further developments}
\label{ssec:prev.works}
In~\cite{fukushima-shima:92} the method of \emph{spectral decimation}
is described for the first time.  Roughly speaking, one can calculate
with this method the eigenvalues of generation $m+1$ from generation
$m$ by the preimage of some rational function.  From the spectral
decimation method, one can also conclude the convergence of
eigenvalues, cf.~\cite{fukushima-shima:92}
or~\cite[Sec.~3.1]{shima:96}.  This method is restricted to certain
fractals, and works only for the self-similar measure.  Our method
works for \emph{any} Borel regular probability measure of full
support.

Mosco~\cite{mosco:94} developed a notion of convergence of energy
forms, nowadays called ``Mosco convergence'' which is equivalent to
strong resolvent convergence.  Kuwae and Shioya~\cite[Sec.~2,
esp.~Sec.~2.5]{kuwae-shioya:03} extended the notion of Mosco
convergence to the case of varying Hilbert spaces.  Our concept
corresponds to a generalisation of \emph{norm} resolvent convergence,
see \Rem{homo} for the relation with homogenisation theory.
In~\cite{hinz-teplyaev:16}, there is a quite general approach how
(separable) Dirichlet forms can be approximated by Dirichlet forms on
finite spaces such that the corresponding forms converge in the sense
of Mosco.  We will relate Mosco convergence and our generalised norm
resolvent convergence in a subsequent publication.

In~\cite{iprrs:10} the resolvent for the Neumann resp.\ Dirichlet
Laplacian is calculated for \pcf self-similar fractals $K$.  The
Neumann Laplacian is the operator associated with $\energy_K$, while
the Dirichlet Laplacian is the operator associated with
$\energy_K^\Dir := \energy_K \restr {\set{u \in \dom \energy_K}{u
    \restr {V_0}=0}}$.

Our approach only needs the existence of a sequence of graphs and
energy forms converging in a suitable sense to the limit space and
limit energy form.  In particular, the self-similar structure of our
fractal is not essential for the proof of \Thm{main}, and one can
extend our results to certain finitely ramified fractals as introduced
in~\cite{teplyaev:08} (see also \Rem{fin.ram.frac}).

For numerical calculations of spectra, an adaption of the finite
element method (FEM) has been used also for fractals, e.g.\
in~\cite{grs:01,asst:01}.  We comment on this approach in \Rem{fem}.
Moreover, some of the conditions we have to check for the
quasi-unitary equivalence appeared already in the literature
in the context of the FEM.  For example, let $P_mu$ be the piecewise
harmonic interpolation spline (using the terminology of Strichartz et
al~\cite{strichartz-usher:00,grs:01}), i.e., the function with values
$u(x)$ for $x \in V_m$ and being $m$-harmonic on $K$.  Then
$P_mu=JJ^{\prime 1}u$ in our terminology of
\Subsec{approx.fractals.graphs}.  Moreover, the estimate
\begin{equation*}
  \normsqr[\Lsqr{K,\mu}]{u- P_m u}
  \le \frac{r_0}N \energy_K(u)
\end{equation*}
for $u \in \dom \energy_K$ for symmetric fractals (with common energy
renormalisation factor $r_0 \in (0,1)$) appearing in the proof of
\Thm{main} in \Subsec{approx.fractals.graphs} is already shown
in~~\cite[Thm.~3.4]{grs:01}.

We will apply the concept of quasi-unitary equivalence also to the
case of ``graph-like'' spaces such as metric graphs and graph-like
manifolds in a second publication~\cite{post-simmer:pre18}. We will
show that under suitable assumptions, a fractal energy form can be
approximated by a suitably renormalised energy form on a family of
metric graphs or graph-like spaces.  This will complement some
numerical results of ``outer'' approximations resp.\ approximations of
fractals by open domains with their natural Neumann Laplacians as
discussed e.g.~in \cite[pp~50--51,
Fig.14-15]{bsu:08} 
and~\cite{bhs:09}. 

We are confident that our method also works for approximations of
\emph{magnetic} Laplacians on fractals by discrete magnetic Laplacians
as described
in~\cite{hinz-teplyaev:13,hinz-rogers:16,hkmrs:17,bchlr:pre17}, hence
allowing to show quasi-unitary equivalence for more general fractals
(and not only those where the spectral decimation method is valid).

\subsection{Structure of the article}
In \Sec{norm.convergence}, we give a brief review of the abstract
convergence result for energy forms in different Hilbert spaces.
\Sec{fractals} contains a brief introduction to \pcf fractals in the
way we need it.  \Sec{approx.fractals.graphs} --- the core of the
paper --- contains the proofs of our main results.  In \Sec{examples}
we present some examples.

\subsection{Acknowledgements}
We would like to thank Michael Hinz for helpful comments on the
manuscript and literature.  We would also like to thank the anonymous
referee for carefully reading the manuscript and helpful comments.

%
\section{An abstract norm resolvent convergence result}
\label{sec:norm.convergence}
%

\subsection{Quasi-unitary equivalence}
In this section, we define a sort of ``distance'' between two
operators $\Delta$ and $\wt \Delta$ defined via energy forms $\energy$
and $\wt \energy$, which act in different Hilbert spaces $\HS$ and
$\wt \HS$, respectively.  The general theory is developed in great
detail in~\cite[Ch.~4]{post:12}.  The concept of quasi-unitary
equivalence assures a \emph{generalised} norm resolvent convergence
for operators $\Delta=\Delta_m$ converging to $\wt \Delta$ as $m \to
\infty$.  Each operator $\Delta_m$ acts in a Hilbert space $\HS_m$ for
$m \in \N$; and the Hilbert spaces are allowed to depend on $m$.

In our application, the Hilbert spaces $\HS=\HS_m=\lsqr{V_m,\mu_m}$ are
functions on the vertices $V_m$ of a weighted graph $(G_m,\mu_m)$.
Moreover, the ``limit'' metric measure space is $(X,\mu)$ with Hilbert
space $\wt \HS=\Lsqr{X,\mu}$.\footnote{It is a question of
  interpretation, which object is simpler and which one wants to
  approximate.  In the first application of the concept of
  quasi-unitary equivalence, the space $\HS$ was the limit space, the
  $\Lsqrspace$-space over a metric graph, while $\wt \HS$ was based on
  a family of shrinking (``graph-like'') manifolds.}

We start now with the general concept:

Let $\HS$ and $\wt \HS$ be two separable Hilbert spaces.  We say that
$\energy$ is an \emph{energy form in $\HS$} if $\energy$ is a closed,
non-negative quadratic form in $\HS$, i.e., if $\energy(f)\coloneqq
\energy(f,f)$ for some sesquilinear form $\map {\energy}{\HS^1 \times
  \HS^1} \C$, denoted by the same symbol, if $\energy(f)\ge 0$ and if
$\HS^1:=\dom \energy$, endowed with the norm defined by
\begin{equation}
  \label{eq:qf.norm}
  \normsqr[1] f
  \coloneqq \normsqr[\HS] f + \energy(f),
\end{equation}
is itself a Hilbert space and dense (as a set) in $\HS$.  We call the
corresponding non-negative, self-adjoint operator $\Delta$ (see
e.g.~\cite[Sec.~VI.2]{kato:66}) the \emph{energy operator} associated
with $\energy$.  Similarly, let $\wt \energy$ be an energy form in
$\wt \HS$ with energy operator $\wt \Delta$.

Associated with an energy operator $\Delta$, we introduce a natural
\emph{scale of Hilbert spaces} $\HS^k$ defined via the \emph{abstract
  Sobolev norms}
\begin{equation}
  \label{eq:def.abstr.sob.norm}
  \norm[k] f 
  \coloneqq \norm{(\Delta+1)^{k/2}f}.
\end{equation}
Then $\HS^k=\dom \Delta^{k/2}$ if $k \ge 0$ and $\HS^k$ is the
completion of $\HS=\HS^0$ with respect to the norm $\norm[k] \cdot$
for $k<0$.  Obviously, the scale of Hilbert spaces for $k=1$ and its
associated norm agrees with $\HS^1$ and $\norm[1]\cdot$ defined above
(see~\cite[Sec.~3.2]{post:12} for details).  Similarly, we denote by
$\wt \HS^k$ the scale of Hilbert spaces associated with $\wt \Delta$.

We now need pairs of so-called \emph{identification operators} acting
on the Hilbert spaces and later also pairs of identification operators
acting on the form domains.
\begin{definition}
  \label{def:quasi-uni}
  \begin{subequations}
    \label{eq:quasi-uni}
    Let $\delta \ge 0$, and let $\map J \HS {\wt \HS}$ and $\map {J'}
    {\wt \HS}\HS$, resp.\ $\map {J^1} {\HS^1} {\wt \HS^1}$ and $\map
    {J^{\prime1}} {\wt \HS^1}{\HS^1}$ be bounded linear operators on
    the Hilbert spaces and energy form domains.
    \begin{enumerate}
    \item We say that $J$ is \emph{$\delta$-quasi-unitary} with
      \emph{$\delta$-quasi-adjoint} $J'$ if
      \begin{align}
        \label{eq:quasi-uni.a}
        &\norm{Jf}\le (1+\delta) \norm f,
        &\bigabs{\iprod {J f} u - \iprod f {J' u}}
        &\le \delta \norm f \norm u
        &(f \in \HS, u \in \wt \HS),\\
        \label{eq:quasi-uni.b}
        &\norm{f - J'Jf}
        \le \delta \norm[1] f,
        &\norm{u - JJ'u}
        &\le \delta \norm[1] u
        &(f \in \HS^1, u \in \wt \HS^1).
      \end{align}
      
    \item We say that $J^1$ and $J^{\prime1}$ are
      \emph{$\delta$-compatible} with the identification operators $J$
      and $J'$ if
      \begin{equation}
        \label{eq:quasi-uni.c}
        \norm{J^1f - Jf}\le \delta \norm[1]f, \quad
        \norm{J^{\prime1}u - J'u} \le \delta \norm[1] u
        \qquad (f \in \HS^1, u \in \wt \HS^1).
      \end{equation}
      
    \item We say that the energy forms $\energy$ and $\wt \energy$ are
      \emph{$\delta$-close} if
      \begin{equation}
        \label{eq:quasi-uni.d}
        \bigabs{\wt \energy(J^1f, u) - \energy(f, J^{\prime1}u)} 
        \le \delta \norm[1] f \norm[1] u
        \qquad (f \in \HS^1, u \in \wt \HS^1).
      \end{equation}
      
    \item We say that $\energy$ and $\wt \energy$ are
      \emph{$\delta$-quasi-unitarily equivalent} (on $\HS$ and $\wt
      \HS$), if~\eqref{eq:quasi-uni.a}--\eqref{eq:quasi-uni.d} are
      fulfilled, i.e.,
      \begin{itemize}
      \item if there exists identification operators $J$ and $J'$ such
        that $J$ is $\delta$-quasi-unitary with $\delta$-adjoint $J'$
        (i.e., \eqref{eq:quasi-uni.a}--\eqref{eq:quasi-uni.b} hold);
      \item if there exists identification operators $J^1$ and
        $J^{\prime1}$ which are $\delta$-compatible with $J$ and $J'$
        (i.e., \eqref{eq:quasi-uni.c} holds);
      \item and if $\energy$ and $\wt \energy$ are $\delta$-close (i.e.,
        \eqref{eq:quasi-uni.d} holds).
      \end{itemize}
    \end{enumerate}
  \end{subequations}
\end{definition}

At first sight the previous definition looks a bit technical, but we
will see in \Sec{approx.fractals.graphs} that it fits perfectly for
our approximation of energy forms on fractals by energy forms on
graphs and is easy to check.  Note that~\eqref{eq:quasi-uni.b} is
equivalent with~\eqref{eq:que.op} in operator norm notation.

\begin{remark}
  \label{rem:quasi-uni}
  Let us state some trivial consequences, explaining the notation in
  two extreme cases:
  \begin{enumerate}
  \item \emph{``$\delta$-quasi-unitary equivalence'' is a quantitative
    generalisation of ``unitary equivalence'':}

  Note that if $\delta=0$, $J$ is $0$-quasi-unitary if and only if $J$
  is unitary with $J^*=J'$.  Moreover, $\energy$ and $\wt \energy$ are
  $0$-quasi-unitarily equivalent if and only if $\Delta$ and $\wt
  \Delta$ are unitarily equivalent (in the sense that $JR=\wt RJ$).

\item \emph{``$\delta_m$-quasi-unitary equivalence'' with $\delta_m
    \to 0$ is a generalisation of ``norm resolvent convergence'':} If
  $\HS=\wt \HS$, $J=J'=\id_\HS$, then the first two
  conditions~\eqref{eq:quasi-uni.a}--\eqref{eq:quasi-uni.b} are
  trivially fulfilled with $\delta=0$.  Moreover, if $\Delta=\Delta_m$
  and $\delta=\delta_m \to 0$ as $m \to \infty$, then $\wt \energy$
  and $\energy_m$ are $\delta_m$-quasi-unitarily equivalent if and
  only if $\norm{R_m^{-1/2}(R_m - \wt R)\wt R^{-1/2}} \to 0$ as $m \to
  \infty$, and hence it follows that $\norm{R_m - \wt R} \to 0$, i.e.,
  $\Delta_m$ \emph{converges} to $\wt \Delta$ \emph{in norm resolvent
    sense} as $m \to \infty$.
  \end{enumerate}
\end{remark}

\begin{remark}
  \label{rem:homo}
  If $\energy_m$ and $\wt \energy$ are $\delta_m$-quasi-unitarily
  equivalent, then the corresponding operators also converge in the
  sense of Kuwae and Shioya~\cite[Sec.~2.5]{kuwae-shioya:03}.  Our
  concept is a generalisation of \emph{norm} resolvent convergence
  while Kuwae and Shioya provide a generalisation of \emph{strong}
  resolvent convergence, i.e., convergence of the resolvents in the
  pointwise operator topology to the case of varying Hilbert spaces.
  Their convergence is equivalent to a version of Mosco convergence of
  forms in varying spaces, see~\cite[Thm.~2.4]{kuwae-shioya:03} .  We
  discuss such questions in a subsequent paper.

  Similarly, in homogenisation problems, usually strong resolvent
  convergence is shown, and hence a typical assumption is compactness
  of the resolvent (i.e., compactness of the domain).
  In~\cite{khrabustovskyi-post:18} the first author and Khrabustovskyi
  show that in the situation of the Dirichlet Laplacian on a domain
  with many small periodic obstacles removed one has
  $\delta_\eps$-quasi-unitary equivalence with a homogenised operator,
  where $\eps$ is the order of periodicity.  Apart from~\cite{dcr:18}
  this is the first time (to the best of our knowledge) where
  \emph{norm} resolvent convergence is shown directly for
  homogenisation problems.

  Moreover, although the concept of quasi-unitary equivalence is
  stronger than e.g.~Mosco convergence, it seems to be not very
  difficult to show the conditions in \Def{quasi-uni} in applications
  such at graphs approximating self-similar fractals or in the above
  homogenisation setting of~\cite{khrabustovskyi-post:18}.
\end{remark}

Note that the conditions in Definition~\ref{def:quasi-uni} are not
written in stone.  It will be convenient in our case to use the
following modification:
\begin{lemma}
  \label{lem:quasi-uni.b}
  Assume that~\eqref{eq:quasi-uni.a} is fulfilled with
  $\deltaA>0$ and~\eqref{eq:quasi-uni.c} with
  $\deltaC>0$. If
  \begin{equation}
    \label{eq:quasi-uni.b''}
    \norm{u - JJ^{\prime1}u}
    \le \delta' \norm[1] u \qquad (u \in \wt \HS^1)
  \end{equation}
  holds, then the second inequality in~\eqref{eq:quasi-uni.b} is
  fulfilled with $\delta=\delta'+(1+\deltaA) \deltaC$.
  
  In particular, if all conditions~\eqref{eq:quasi-uni} are fulfilled
  for some $\delta>0$, except for the second one
  in~\eqref{eq:quasi-uni.b} which is replaced
  by~\eqref{eq:quasi-uni.b''}, then $\energy$ and $\wt \energy$ are
  $\wt \delta$-quasi-unitarily equivalent with $\wt \delta=\delta' +
  (1+\delta)\delta$.
\end{lemma}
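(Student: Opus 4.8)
The plan is a short telescoping estimate: I would insert $JJ^{\prime1}$ as an intermediate term between $u$ and $JJ'u$, and then read off the two resulting pieces from the hypotheses~\eqref{eq:quasi-uni.b''} and~\eqref{eq:quasi-uni.c} together with the operator-norm bound on $J$ coming from~\eqref{eq:quasi-uni.a}. The ``in particular'' clause will then follow by applying the first part and noting that enlarging the constant only weakens the remaining conditions.

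First I would fix $u \in \wt\HS^1$ and write the telescoping identity
\begin{equation*}
  u - JJ'u = (u - JJ^{\prime1}u) + J(J^{\prime1}u - J'u),
\end{equation*}
so that the triangle inequality gives
\begin{equation*}
  \norm{u - JJ'u} \le \norm{u - JJ^{\prime1}u} + \norm J \, \norm{J^{\prime1}u - J'u}.
\end{equation*}
By hypothesis~\eqref{eq:quasi-uni.b''} the first summand is at most $\delta' \norm[1] u$. For the second summand, the first inequality in~\eqref{eq:quasi-uni.a} with parameter $\deltaA$ yields $\norm J \le 1+\deltaA$, while the second estimate in~\eqref{eq:quasi-uni.c} with parameter $\deltaC$ gives $\norm{J^{\prime1}u - J'u} \le \deltaC \norm[1] u$. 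Combining these produces
\begin{equation*}
  \norm{u - JJ'u} \le \bigl(\delta' + (1+\deltaA)\deltaC\bigr) \norm[1] u,
\end{equation*}
which is exactly the second inequality in~\eqref{eq:quasi-uni.b} with $\delta = \delta' + (1+\deltaA)\deltaC$.

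For the second assertion I would apply the statement just proven with $\deltaA = \deltaC = \delta$, so that the second inequality in~\eqref{eq:quasi-uni.b} holds with constant $\wt\delta \defeq \delta' + (1+\delta)\delta$. Since $\delta' \ge 0$ and $(1+\delta)\delta \ge \delta$, we have $\wt\delta \ge \delta$; because every one of the conditions~\eqref{eq:quasi-uni.a}--\eqref{eq:quasi-uni.d} is an inequality whose right-hand side is monotone increasing in its constant, the conditions~\eqref{eq:quasi-uni.a}, the first inequality in~\eqref{eq:quasi-uni.b}, \eqref{eq:quasi-uni.c} and~\eqref{eq:quasi-uni.d}, which are assumed with $\delta$, a fortiori hold with the larger constant $\wt\delta$. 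Hence all four conditions~\eqref{eq:quasi-uni.a}--\eqref{eq:quasi-uni.d} are satisfied with the single constant $\wt\delta$, i.e.\ $\energy$ and $\wt\energy$ are $\wt\delta$-quasi-unitarily equivalent.

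There is no genuinely hard step here — the argument is a one-line telescoping estimate. The only point requiring care is the bookkeeping in the ``in particular'' clause: one must verify $\wt\delta \ge \delta$ before collecting all of~\eqref{eq:quasi-uni.a}--\eqref{eq:quasi-uni.d} under a common constant, and this rests on the (elementary) monotonicity of each defining inequality in its constant.
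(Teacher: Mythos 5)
Your proof is correct and follows essentially the same route as the paper: the identical telescoping decomposition $u - JJ'u = (u - JJ^{\prime1}u) + J(J^{\prime1}-J')u$, bounded term by term via~\eqref{eq:quasi-uni.b''}, the operator bound $\norm J \le 1+\deltaA$ from~\eqref{eq:quasi-uni.a}, and~\eqref{eq:quasi-uni.c}. Your handling of the ``in particular'' clause (checking $\wt\delta \ge \delta$ so that the remaining conditions hold a fortiori with the larger constant) is in fact spelled out more carefully than the paper's rather terse closing remark.
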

\begin{proof}
  We have
  \begin{align*}
    \norm{u - JJ'u}
    &\le \norm{u - J J^{\prime1} u}
    + \norm{J(J^{\prime1}-J')u}\\
    &\le \norm{u - JJ^{\prime1}u}
    + \norm J \norm{(J^{\prime1}-J')u}
    \le \bigl(\delta'+(1+\deltaA)\deltaC\bigr) \norm[1] u
  \end{align*}
  and if $\delta',\deltaA,\deltaC \le \delta$,
  then the error estimate is greater or equal to $2\delta+\delta^2$,
  as claimed.
\end{proof}

\subsection{Consequences of quasi-unitary equivalence}

Let us mention some consequences of the above-mentioned quasi-unitary
equivalence we need in the proof of \Prp{conv.ef}.

We first provide the notation $\norm[-1\to1]A=\norm{(\wt
  \Delta+1)^{1/2}A(\Delta+1)^{1/2}}$ for $\map A{\HS^{-1}}{\wt\HS^1}$.
Moreover, $\map{(J^{\prime 1})^*}{\HS^{-1}}{\wt \HS^{-1}}$ denotes the
dual map of $\map{J^{\prime 1}}{\wt \HS^1}{\HS^1}$ with respect to the
dual pairing $\HS^1 \times \HS^{-1}$ induced by the inner product on
$\HS$ and similarly on $\wt \HS$.
\begin{proposition}[{see~\cite[Ch.~4]{post:12}}]
  \label{prp:que-conseq}
  Let $\map \eta {[0,\infty)} \C$ be a function continuous in a
  neighbourhood $U$ of $\spec {\wt \Delta}$ such that $\lim_{\lambda
    \to \infty} (\lambda+1)^{1/2} \eta(\lambda)$ exists.  Then there
  is a function $\delta \mapsto \eps(\delta)$ such that
  $\eps(\delta)\to 0$ if $\delta \to 0$ and
  \begin{subequations}
    \begin{align}
      \label{eq:que-j'}
      \norm {J' \eta(\wt \Delta) - \eta(\Delta) J'}
      &\le \eps(\delta),\\
      \label{eq:que-jnorm}
      \bigabs{\norm{J'u}-\norm u}
      &\le 3\delta \norm[1] u \\
      \label{eq:que-j''}
      \norm[-1\to 1]{J^1\eta(\Delta) - \eta(\wt \Delta) (J^{\prime 1})^*}
      &\le \eps(\delta)
    \end{align}
  \end{subequations}
  for any pair of $\delta$-quasi-unitarily equivalent energy forms
  $\energy$ and $\wt\energy$ with associated operators $\Delta$ and
  $\wt \Delta$, where $\eps=\eps_\eta$ depends only on $\eta$ and $U$.
  If $\eta(\lambda)=\e^{-t \lambda}$ (for $t >0$) or $\eta=\1_D$ (if
  $\bd D \cap \spec {\wt \Delta}=\emptyset$ for an open set $D \subset
  \C$ with smooth boundary) we can choose $\eps(\delta)=C \delta$ for
  some constant $C>0$.
\end{proposition}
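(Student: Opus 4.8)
The plan is to reduce the whole proposition to a single quantitative \emph{resolvent} estimate and then propagate it through the functional calculus. Throughout write $R=(\Delta+1)^{-1}$ and $\wt R=(\wt \Delta+1)^{-1}$, and recall that $\HS^1=\dom\energy$ carries $\normsqr[1]f=\normsqr f+\energy(f)$. Before touching the functional calculus I would dispatch the elementary estimate~\eqref{eq:que-jnorm}. For $u\in\wt\HS^1$ the almost-adjoint relation~\eqref{eq:quasi-uni.a} applied with $f=J'u$ gives $\bigabs{\iprod{JJ'u}{u}-\normsqr{J'u}}\le\delta\norm{J'u}\norm u$, while the second estimate in~\eqref{eq:quasi-uni.b} gives $\bigabs{\iprod{JJ'u}{u}-\normsqr u}\le\delta\norm[1]u\norm u$. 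Subtracting these, first using the quadratic term to bound $\norm{J'u}\le(1+\Err(\delta))\norm[1]u$ and then dividing the resulting difference of squares by $\norm{J'u}+\norm u$, a short computation yields the claimed $\bigabs{\norm{J'u}-\norm u}\le 3\delta\norm[1]u$.

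The heart of the argument is the resolvent case $\eta(\lambda)=(\lambda+1)^{-1}$ of~\eqref{eq:que-j'}. Fix $u\in\wt\HS$ and set $\tilde g=\wt R u\in\dom\wt\Delta$, so that the weak equation $\wt\energy(\tilde g,w)+\iprod{\tilde g}{w}=\iprod u w$ holds for all $w\in\wt\HS^1$, with $\norm[1]{\tilde g}\le\norm u$. I would test the element $h:=J^{\prime1}\tilde g-RJ'u\in\HS^1$ against an arbitrary $\phi\in\HS^1$ in the form $\energy(\cdot,\phi)+\iprod\cdot\phi$. In the first summand I invoke closeness~\eqref{eq:quasi-uni.d} to replace $\energy(J^{\prime1}\tilde g,\phi)$ by $\wt\energy(\tilde g,J^1\phi)$ up to $\delta\norm[1]{\tilde g}\norm[1]\phi$, and then eliminate $\wt\energy(\tilde g,J^1\phi)$ via the weak equation with $w=J^1\phi$; in the inner-product summand I pass from $J^{\prime1}$ to $J'$ by compatibility~\eqref{eq:quasi-uni.c}, from $J'$ to $J$ by the quasi-adjoint relation~\eqref{eq:quasi-uni.a}, and use~\eqref{eq:quasi-uni.c} once more to trade $J^1\phi$ for $J\phi$. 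All error terms are controlled by $\delta\norm[1]\phi(\norm[1]{\tilde g}+\norm u)\le 2\delta\norm[1]\phi\norm u$, and what remains collapses to $\iprod{J'u}{\phi}$, which is precisely the weak equation defining $RJ'u$. Hence $\bigabs{\energy(h,\phi)+\iprod h\phi}\le C\delta\norm[1]\phi\norm u$ for all $\phi$; choosing $\phi=h$ gives $\norm[1]h\le C\delta\norm u$, and a last application of~\eqref{eq:quasi-uni.c} turns $J^{\prime1}\tilde g$ into $J'\wt R u$, so that $\norm{J'\wt R-RJ'}\le C\delta$.

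It then remains to upgrade this to arbitrary admissible $\eta$. I would show that the $\eta$ for which $\norm{J'\eta(\wt\Delta)-\eta(\Delta)J'}\le\eps(\delta)$ with $\eps(\delta)\to0$ form a closed subalgebra of continuous functions of the resolvent: writing $s=(\lambda+1)^{-1}\in(0,1]$, the hypothesis that $(\lambda+1)^{1/2}\eta(\lambda)$ converges as $\lambda\to\infty$ says exactly that $\eta=s^{1/2}\psi$ with $\psi$ extending continuously to $s=0$, i.e.\ $\eta$ lies in the uniform closure of $(\Delta+1)^{-1/2}\cdot\{\text{polynomials in }R\}$. Since products telescope and the half-power $(\Delta+1)^{-1/2}$ obeys an estimate of the same shape (the computation above, read in the scale $\HS^{-1}\to\HS^1$), a quantitative Stone--Weierstrass approximation of $\psi$ by polynomials produces $\eps(\delta)\to0$. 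For the two distinguished choices the closure step is bypassed: for $\eta(\lambda)=\e^{-t\lambda}$ one uses analyticity (a Laplace/contour representation through the resolvent), and for $\eta=\1_D$ the Riesz projection
\[
  \1_D(\wt\Delta)=\frac{1}{2\pi\im}\oint_{\bd D}(\zeta-\wt\Delta)^{-1}\dd\zeta
\]
reduces everything to the resolvent estimate at the $\zeta\in\bd D$, uniformly bounded since $\bd D\cap\spec{\wt\Delta}=\emptyset$; both give the linear bound $\eps(\delta)=C\delta$. Finally~\eqref{eq:que-j''} follows by duality: $\norm[-1\to1]{\cdot}$ is the norm after conjugating with $(\wt\Delta+1)^{1/2}$ and $(\Delta+1)^{1/2}$, and $(J^{\prime1})^*$ is paired with $J^1$ precisely so that~\eqref{eq:que-j''} is the transpose in the Hilbert scale of~\eqref{eq:que-j'} (with $\Delta$ and $\wt\Delta$ interchanged), using self-adjointness of $\eta(\Delta),\eta(\wt\Delta)$ for real $\eta$. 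I expect the main obstacle to be the bookkeeping in the resolvent step --- threading all four operators $J,J',J^1,J^{\prime1}$ through the two weak formulations so that every error is genuinely $\Err(\delta)$ --- together with keeping the Stone--Weierstrass transfer quantitative enough that $\eps(\delta)\to0$ while recognising the decay hypothesis on $\eta$ as membership in the resolvent algebra.
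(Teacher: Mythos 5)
The paper itself gives no proof of this proposition: it is quoted from \cite[Ch.~4]{post:12}, so your attempt has to be measured against the argument there, which your plan largely reconstructs. Your proof of~\eqref{eq:que-jnorm} is correct (the computation in fact gives the better constant $2\delta$), and your resolvent step is sound and is precisely the core of the cited proof: testing $h=J^{\prime1}(\wt\Delta+1)^{-1}u-(\Delta+1)^{-1}J'u$ against $\phi\in\HS^1$, eliminating the energy term via~\eqref{eq:quasi-uni.d} and the weak equation, and threading the inner products through~\eqref{eq:quasi-uni.c}, \eqref{eq:quasi-uni.a}, \eqref{eq:quasi-uni.c} does give $\norm[1]{h}\le C\delta\norm{u}$, hence the resolvent case of~\eqref{eq:que-j'}; the Stone--Weierstrass upgrade in the variable $s=(\lambda+1)^{-1}$ is also the standard (and the cited) route.

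There are, however, two genuine gaps. First, \eqref{eq:que-j''} does \emph{not} follow ``by duality'' from~\eqref{eq:que-j'}. For real $\eta$, the adjoint of $J^1\eta(\Delta)-\eta(\wt\Delta)(J^{\prime1})^*$ is $\eta(\Delta)(J^1)^*-J^{\prime1}\eta(\wt\Delta)$, i.e.\ an estimate of exactly the same type, still in the $\norm[-1\to 1]{\cdot}$ norm and still involving the form-level operators, merely with the roles of the two spaces swapped --- it is not~\eqref{eq:que-j'}, which is a plain $0\to0$ estimate for $J'$. Nor can you trade $J^{\prime1}$ for $J'$, or $(J^1)^*$ for $(J')^*\approx J$, inside the $-1\to1$ norm: the compatibility bounds~\eqref{eq:quasi-uni.c} control $(J^1-J)f$ only in the $\wt\HS^0$ norm for $f\in\HS^1$, and $Jf$ need not even lie in $\wt\HS^1$, so $(\wt\Delta+1)^{1/2}(J^1-J)$ admits no $\Err(\delta)$ bound (it is not even defined); conjugation by the unbounded operators destroys all the $\delta$-closeness relations. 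The estimate~\eqref{eq:que-j''} is a genuinely stronger smoothing statement and needs its own proof. The fix is available from your own machinery: re-run the testing argument in the scale, i.e.\ for $v\in\wt\HS^{-1}$ set $\tilde g=(\wt\Delta+1)^{-1}v$ (so $\norm[1]{\tilde g}=\norm[-1]{v}$) and $h=J^{\prime1}\tilde g-(\Delta+1)^{-1}(J^1)^*v$; the identical chain of estimates yields $\norm[1]{h}\le C\delta\norm[-1]{v}$, which is the resolvent case of (the adjoint of)~\eqref{eq:que-j''}, and the functional-calculus upgrade must then also be carried out in the $-1\to1$ norm.

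Second, both your Riesz-projection step and your reduction of general $\eta$ to continuous functions of $s$ need control of $\spec{\Delta}$, which you never establish. To write $\1_D(\Delta)$ as a contour integral you need $\bd D\cap\spec{\Delta}=\emptyset$ together with a uniform bound on $(\zeta-\Delta)^{-1}$ for $\zeta\in\bd D$; your justification (``uniformly bounded since $\bd D\cap\spec{\wt\Delta}=\emptyset$'') only controls the $\wt\Delta$-resolvent. Likewise, identifying $\eta$ with $s^{1/2}\psi(s)$, $\psi$ continuous on $[0,1]$, silently assumes $\eta$ continuous on all of $[0,\infty)$, whereas the hypothesis gives continuity only on a neighbourhood $U$ of $\spec{\wt\Delta}$; the values of $\eta$ on $\spec{\Delta}\setminus U$ enter $\eta(\Delta)$ and are unconstrained unless one first shows $\spec{\Delta}\subset U$ for $\delta$ small. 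Both points require a spectral stability statement --- quasi-unitary equivalence confines $\spec{\Delta}$ to a small neighbourhood of $\spec{\wt\Delta}$, which can be derived from your resolvent estimate or cited from \cite[Sec.~4.3]{post:12} --- and this is exactly where the dependence of $\eps$ on $U$ (and on the distance of $\bd D$ to $\spec{\wt\Delta}$) enters the statement. Without it, the two distinguished cases $\eta=\e^{-t\lambda}$ aside, the upgrade step as written would fail.
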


An inequality similar to~\eqref{eq:que-jnorm} is proven
in~\eqref{eq:que-jnorm'} in a more specific situation.

\begin{proposition}
  \label{prp:conv.ef}
  Let $\energy$ and $\wt {\energy}$ be two $\delta$-quasi-unitarily
  equivalent energy forms with associated operators $\Delta$ and $\wt
  \Delta$.  Assume that $\wt \Phi$ is a normalised eigenvector of $\wt
  \Delta$, such that its eigenvalue $\wt \lambda$ is discrete in
  $\spec {\wt \Delta}$, i.e., there is an open disc $D$ in $\C$ such
  that $\spec {\wt \Delta} \cap D=\{\wt \lambda\}$.  Then there exists
  a normalised eigenvector $\Phi$ of $\Delta$ with $\Phi \in \ran
  \1_D(\Delta)$ and a universal constant $C$ depending only on $\wt
  \lambda$ (and the radius of $D$) such that
  \begin{equation*}
    \norm[1]{J^1 \Phi - \wt \Phi} \le C\delta
  \end{equation*}
  for $\delta>0$ small enough.
\end{proposition}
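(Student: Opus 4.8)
The plan is to use the spectral projections $P := \1_D(\Delta)$ and $\wt P := \1_D(\wt \Delta)$ together with the three consequences of quasi-unitary equivalence collected in \Prp{que-conseq}, applied to the Lipschitz case $\eta = \1_D$ (legitimate since $\bd D \cap \spec {\wt \Delta} = \emptyset$). Throughout I would write $\Lambda := \norm[1]{\wt \Phi} = (1+\wt\lambda)^{1/2}$, and let $\Lambda_D$ denote the supremum of $(1+\lambda)^{1/2}$ over $\lambda$ in the bounded disc $D$. The point of $\Lambda_D$ is that $\wt P$ \emph{smooths}, i.e.\ $\norm[1]{\wt P g} \le \Lambda_D \norm{g}$ for $g \in \wt\HS$, so that closeness of vectors in $\ran \wt P$ measured in $\wt\HS$ upgrades to closeness in the energy norm at the cost of the fixed factor $\Lambda_D$. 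Since $\wt\lambda$ is isolated, $\wt P \wt\Phi = \wt \Phi$.

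The key estimate is obtained by feeding a normalised $\Phi \in \ran P$ into~\eqref{eq:que-j''}. Writing $\Phi = (\Delta+1)^{1/2}h$ with $h := (\Delta+1)^{-1/2}\Phi$ (so $\norm h \le 1$) and using $P\Phi = \Phi$, the definition of $\norm[-1\to1]{\cdot}$ gives
\begin{equation*}
  \norm[1]{J^1 \Phi - \wt P (J^{\prime 1})^* \Phi} \le \eps(\delta) = C\delta
  \qquad (\Phi \in \ran P, \ \norm\Phi = 1).
\end{equation*}
Thus it remains to choose $\Phi$ so that the vector $\wt P (J^{\prime 1})^* \Phi \in \ran \wt P$ is close to $\wt \Phi$ in the energy norm.

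For the candidate I would take $\Phi := \Phi_0 / \norm{\Phi_0}$ with $\Phi_0 := P J' \wt\Phi \in \ran P$; this is the required normalised linear combination of eigenfunctions of $\Delta$ with eigenvalues in $D$. That $\norm{\Phi_0}$ is close to $1$ follows from $\norm{\Phi_0 - J'\wt\Phi} \le C\delta$ (apply~\eqref{eq:que-j'} to $\wt\Phi$ and use $\wt P \wt\Phi = \wt\Phi$) combined with $\bigabs{\norm{J'\wt\Phi} - 1} \le 3\delta \Lambda$ from~\eqref{eq:que-jnorm}; hence $\bigabs{\norm{\Phi_0} - 1} \le C\delta$ and $\norm{\Phi_0} \ge 1/2$ for $\delta$ small. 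To see that $\wt P (J^{\prime 1})^* \Phi_0 \approx \wt\Phi$ I would run the chain
\begin{equation*}
  \wt P (J^{\prime 1})^* \Phi_0 \approx \wt P J \Phi_0 \approx \wt P J J' \wt\Phi \approx \wt P \wt\Phi = \wt\Phi,
\end{equation*}
each step being $O(\delta)$ in the energy norm: the first step replaces $(J^{\prime 1})^*$ by $J$ by testing against $w \in \ran \wt P$ and using the quasi-adjoint property~\eqref{eq:quasi-uni.a} and the compatibility~\eqref{eq:quasi-uni.c} (with $\norm[1]{w} \le \Lambda_D \norm w$), the second uses $\norm{\Phi_0 - J'\wt\Phi} \le C\delta$ and the boundedness of $J$, and the third uses $\norm{\wt\Phi - J J' \wt\Phi} \le \delta \Lambda$ from~\eqref{eq:quasi-uni.b}; in each case the smoothing property of $\wt P$ turns the $\wt\HS$-estimate into the desired $\norm[1]$-estimate. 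Dividing by $\norm{\Phi_0}$ and using $\bigabs{\norm{\Phi_0} - 1} \le C\delta$ gives $\norm[1]{\wt P (J^{\prime 1})^* \Phi - \wt\Phi} \le C\delta$, and combining with the key estimate yields $\norm[1]{J^1 \Phi - \wt\Phi} \le C\delta$ by the triangle inequality.

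The main obstacle is that the conclusion is stated in the \emph{energy} norm $\norm[1]$ rather than the Hilbert space norm, so that the soft level-$0$ facts (quasi-isometry, intertwining of spectral projections) must be upgraded to level $1$; this is exactly why the scale-of-spaces estimate~\eqref{eq:que-j''} and the smoothing property of $\wt P$ on the bounded spectral window $D$ are indispensable. The remaining difficulty is purely bookkeeping: keeping track of the three identifications $J$, $J'$, $J^{\prime 1}$ and the dual $(J^{\prime 1})^*$ and passing between them via~\eqref{eq:quasi-uni.a} and~\eqref{eq:quasi-uni.c}, which only costs further $O(\delta)$ terms. No simplicity assumption on $\wt\lambda$ is needed, since the chain terminates at $\wt P \wt\Phi = \wt\Phi$, automatically aligning the approximation with the prescribed eigenfunction $\wt\Phi$ rather than merely with its eigenspace.
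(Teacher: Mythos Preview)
Your proof is correct and follows essentially the same route as the paper: the same candidate $\Phi = PJ'\wt\Phi/\norm{PJ'\wt\Phi}$, the same use of~\eqref{eq:que-j'} and~\eqref{eq:que-jnorm} to control $\norm{\Phi_0}$, and the same crucial application of the scale-of-spaces estimate~\eqref{eq:que-j''} together with the smoothing of $\wt P$ to obtain the $\norm[1]$-bound. The only cosmetic difference is in the bookkeeping of the remainder: the paper applies~\eqref{eq:que-j''} directly to $J'\wt\Phi$ and then bounds $\norm[0\to-1]{(J^{\prime 1})^*J'-\id}$ by duality, whereas you apply~\eqref{eq:que-j''} to $\Phi$ and handle the residual via the explicit chain $\wt P(J^{\prime1})^*\Phi_0 \to \wt PJ\Phi_0 \to \wt PJJ'\wt\Phi \to \wt\Phi$, which amounts to the same telescoping unpacked one level further.
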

Note that the eigenvalue $\wt \lambda$ does not necessarily need to
have finite multiplicity.
\begin{proof}
  Set $P:=\1_D(\Delta)$ and $\wt P:=\1_D(\wt\Delta)$.  Note that $\ran
  P$ may consist of the linear combination of several eigenvectors if
  $\wt \lambda$ is not a simple eigenvalue.  We have
  \begin{equation*}
    \norm{PJ'\wt \Phi}
    \ge \norm{J'\wt P\wt\Phi} - \norm{(PJ'-J'\wt P)\wt\Phi}
    \ge \norm{J'\wt\Phi} - C_\eta \delta \norm {\wt \Phi}
    \ge 1-\underbrace{\bigl(3(\wt \lambda+1)^{1/2}+C_\eta\bigr)}_{=:C_1}\delta
  \end{equation*}
  by~\eqref{eq:que-j'} with $\eta=\1_D$ and~\eqref{eq:que-jnorm} and similarly,
  \begin{equation*}
    \norm{PJ'\wt \Phi}
    \le \norm{J'\wt P\wt\Phi} + \norm{(PJ'-J'\wt P)\wt\Phi}
    \le \norm{J'\wt\Phi} + C_\eta \delta \norm {\wt \Phi}
    \le 1+C_1 \delta,
  \end{equation*}
  and therefore
  \begin{equation}
    \label{eq:conv.ef}
    \bigabs{\norm{PJ'\wt \Phi}-1} \le C_1 \delta.
  \end{equation}
  In
  particular, $\norm{PJ'\wt\Phi} >0$ for $\delta$ small enough.
  Let
  \begin{equation*}
    \Phi := \frac 1 {\norm {PJ'\wt\Phi}} P J' \wt \Phi,
  \end{equation*}
  then $\Phi \in \HS^1$ and for $\delta<1/C_1$ we have
  \begin{align*}
    \norm[1]{J^1 \Phi - \wt \Phi}
    &= 
    \Bignorm[1]{\frac 1 {\norm {PJ'\wt\Phi}} J^1P J' \wt \Phi - \wt \Phi}\\
    &\le\frac 1 {\norm {PJ'\wt\Phi}} 
    \Bigl( 
      \bignorm[1]{(J^1P - \wt P (J^{\prime 1})^*)J'\wt\Phi}
      + \bignorm[1]{\wt P((J^{\prime 1})^*J'\wt \Phi - \wt \Phi)}
      \\
    &\hspace*{0.515\textwidth}+ \bigabs{1-\norm{PJ'\wt \Phi}} \norm[1]{\wt \Phi}
    \Bigr)\\
    &\le\frac 1 {1-C_1\delta} 
    \Bigl( 
      C_\eta \delta \norm[-1]{\wt \Phi}
      + (\wt\lambda+1) \bignorm[0\to-1]{(J^{\prime 1})^*J'-\id} \norm[1]{\wt \Phi}
      + C_1\delta \norm[1]{\wt \Phi}
    \Bigr)\\
    &\le\frac 1 {1-C_1\delta} 
    \bigl(C_\eta + (\wt\lambda+1)^{3/2} C'' + C_1 (\wt \lambda+1)^{1/2}
    \bigr)\delta
  \end{align*}
  using~\eqref{eq:que-j''}, $\norm[-1 \to 1]{\wt P}=1+\wt \lambda$
  and~\eqref{eq:conv.ef} for the second inequality, and
  \begin{align*}
    \bignorm[0\to-1]{(J^{\prime 1})^*J'-\id}
    =\bignorm[1\to 0]{(J')^*J^{\prime 1}-\id}
    &\le \norm{(J')^*-J}\norm[1 \to 0]{J^{\prime 1}}
      + \norm[1\to 0]{J J^{\prime 1}-\id}\\
      &\le \delta (1+3\delta) + \delta' 
      \le (1+3/C_1 + C') \delta
      =: C'' \delta
  \end{align*}
  using also
  \begin{equation*}
    \norm[1 \to 0]{J^{\prime 1}} 
    \le \norm[1 \to 0]{J^{\prime 1}-J'} + \norm[0 \to 0]{J'-J^*} 
    + \norm[0 \to 0]{J^*}
    \le 1+3\delta
    \le 1+3/C_1
  \end{equation*}
  as $\norm[1 \to 0] A \le \norm[0 \to 0] A$ and $\delta<1/C_1$, and
  using $\delta'\le C'\delta$ for some constant $C'>0$; the latter
  estimate can be seen similarly as in \Lem{quasi-uni.b}.
\end{proof}

\subsection{Some eigenvalue estimates}
For problems with purely discrete spectrum, we also cite some results
dealing directly with eigenvalue estimates, using the min-max
principle (see~\cite[Prp.~4.4.18]{post:12} for the most general
version): Here, $\lambda_k$ denotes the $k$-th eigenvalue of the
operator associated with $\energy$, in increasing order and repeated
according to their multiplicity.
\begin{proposition}
  \label{prp:ew.comp}
  Assume that we have a first order identification operator
  \begin{equation*}
    \map {J^1} {\HS^1} {\wt \HS^1}
  \end{equation*}
  such that there exist $\delta_0 \ge 0$ and $\delta_1 \ge 0$ with
  \begin{equation}
    \label{eq:ew.comp}
    \normsqr{J^1 f} 
    \ge \normsqr f 
    - \delta_0\normsqr f - \delta_1 \energy(f)
    \quadtext{and}
    \wt {\energy} (J^1 f)
    \le \energy(f) 
  \end{equation}
  for all $f \in \HS^1$. If $\delta_0+\delta_1 \lambda_k < 1$ then
  \begin{equation}
    \label{eq:ew.est}
    \wt \lambda_k 
    \le \frac 1{1-\delta_0-\delta_1 \lambda_k} 
    \cdot \lambda_k
    \quadtext{or}
    \frac {1-\delta_0}{1+\delta_1 \wt \lambda_k} \cdot \wt \lambda_k
    \le \lambda_k
  \end{equation}
  for all $k \in \N$.
\end{proposition}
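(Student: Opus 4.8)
The plan is to apply the min-max principle to the limit form $\wt\energy$, using as trial subspace the push-forward through $J^1$ of an optimal subspace for $\energy$. Recall that for an operator whose $k$-th eigenvalue $\lambda_k$ lies below the essential spectrum (in particular in the purely discrete case treated here), the form min-max characterisation reads
\begin{equation*}
  \wt\lambda_k
  = \min_{\substack{\wt L \subset \wt\HS^1 \\ \dim \wt L = k}} \;
    \max_{\substack{g \in \wt L \\ g \ne 0}} \frac{\wt\energy(g)}{\normsqr g},
\end{equation*}
and similarly for $\lambda_k$ with $\energy$ and $\HS^1$; moreover the minimum defining $\lambda_k$ is attained on $L := \operatorname{span}(\psi_1,\dots,\psi_k)$, the span of the first $k$ eigenvectors of $\Delta$, on which every $f$ satisfies $\energy(f) \le \lambda_k \normsqr f$.

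First I would check that $\wt L := J^1(L)$ is again $k$-dimensional, i.e.\ that $J^1$ is injective on $L$. For nonzero $f \in L$, combining the first hypothesis of~\eqref{eq:ew.comp} with $\energy(f) \le \lambda_k \normsqr f$ gives
\begin{equation*}
  \normsqr{J^1 f}
  \ge \normsqr f - \delta_0 \normsqr f - \delta_1 \energy(f)
  \ge (1 - \delta_0 - \delta_1 \lambda_k) \normsqr f,
\end{equation*}
and the right-hand side is strictly positive precisely because of the standing assumption $\delta_0 + \delta_1 \lambda_k < 1$. Hence $J^1 f \ne 0$, so $J^1\restr{L}$ is injective and $\dim \wt L = k$, making $\wt L$ an admissible competitor in the min-max for $\wt\lambda_k$.

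Next I would bound the Rayleigh quotient of $\wt\energy$ over $\wt L$. For $g = J^1 f$ with $f \in L$ nonzero, the second hypothesis $\wt\energy(J^1 f) \le \energy(f)$ together with the lower bound on $\normsqr{J^1 f}$ just obtained yields
\begin{equation*}
  \frac{\wt\energy(g)}{\normsqr g}
  = \frac{\wt\energy(J^1 f)}{\normsqr{J^1 f}}
  \le \frac{\energy(f)}{(1-\delta_0-\delta_1\lambda_k)\normsqr f}
  \le \frac{\lambda_k}{1-\delta_0-\delta_1\lambda_k}.
\end{equation*}
Inserting $\wt L$ into the min-max principle for $\wt\lambda_k$ then gives the first inequality in~\eqref{eq:ew.est}. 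The second inequality is a purely algebraic rearrangement: multiplying $\wt\lambda_k(1-\delta_0-\delta_1\lambda_k)\le\lambda_k$ out and collecting the mixed term $\delta_1\lambda_k\wt\lambda_k$ on one side produces $(1-\delta_0)\wt\lambda_k \le (1+\delta_1\wt\lambda_k)\lambda_k$.

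I expect the only point needing genuine care to be the dimension count in the second paragraph: the entire argument collapses if $\dim \wt L < k$, and this is exactly where the hypothesis $\delta_0 + \delta_1\lambda_k < 1$ is consumed. Everything else is the standard monotonicity-of-min-max argument applied to the two quantitative bounds in~\eqref{eq:ew.comp}, with no further input required.
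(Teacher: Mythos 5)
Your proof is correct, and it follows exactly the route the paper points to: the paper does not prove Proposition~\ref{prp:ew.comp} itself but cites it from~\cite[Prp.~4.4.18]{post:12} as a min-max argument, and your proof (trial space $J^1(L)$ with $L$ the span of the first $k$ eigenvectors, injectivity of $J^1\restr L$ from the hypothesis $\delta_0+\delta_1\lambda_k<1$, then the Rayleigh-quotient bound and algebraic rearrangement) is precisely that standard argument, carried out completely and with the one delicate point --- the dimension count --- correctly identified and handled.
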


If we have shown quasi-unitary equivalence and one additional
assumption, then we can conclude from the last proposition (again
slightly adopted to our application in \Sec{approx.fractals.graphs} in
order to get an optimal estimate):
\begin{proposition}
  \label{prp:q-u-e.ew}
  Assume that $\energy$ and $\wt \energy$ are $\delta$-quasi-unitarily
  equivalent (with~\eqref{eq:quasi-uni.a} and~\eqref{eq:quasi-uni.d}
  fulfilled with $\delta=0$ and~\eqref{eq:quasi-uni.b}
  and~\eqref{eq:quasi-uni.c} with $\norm[1] f$ resp.\ $\norm[1] u$
  replaced by $\energy(f)^{1/2}$ resp.\ $\wt \energy(u)^{1/2}$) and
  that
  \begin{subequations}
    \begin{equation}
      \label{eq:q-u-e.ew}
      \energy(f,J^{\prime 1} J^1 f-f)\le 0
    \end{equation}
    for all $f \in \HS^1$, then~\eqref{eq:ew.est} is fulfilled with
    $\delta_0=\delta_1=\delta$.  Moreover, if
    \begin{equation}
      \label{eq:q-u-e.ew'}
      \wt \energy(u,J^1J^{\prime 1} u-u)\le 0
    \end{equation}
    for all $u \in \wt \HS^1$, then
    \begin{equation}
      \label{eq:q-u-e.ew.result}
      \lambda_k 
      \le \frac 1{1-\delta(1+\wt \lambda_k)} 
      \cdot \wt \lambda_k
      \quadtext{or}
      \frac {1-\delta}{1+\delta \lambda_k} \cdot \lambda_k
      \le \wt \lambda_k.
    \end{equation}
  \end{subequations}
\end{proposition}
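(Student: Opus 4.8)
The plan is to deduce everything from the eigenvalue-comparison \Prp{ew.comp}, so the whole task reduces to checking its hypothesis~\eqref{eq:ew.comp} for the first-order identification operator $J^1$ with $\delta_0=\delta_1=\delta$, and then to run the same argument once more with the roles of $\energy$ and $\wt\energy$ (and of $J^1$ and $J^{\prime 1}$) interchanged.

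First I would verify the energy-domination $\wt\energy(J^1 f)\le\energy(f)$. Setting $u=J^1 f$ in the closeness relation~\eqref{eq:quasi-uni.d}, which holds here with $\delta=0$, gives the identity $\wt\energy(J^1 f)=\wt\energy(J^1 f,J^1 f)=\energy(f,J^{\prime 1}J^1 f)$. Splitting $\energy(f,J^{\prime 1}J^1 f)=\energy(f)+\energy(f,J^{\prime 1}J^1 f-f)$ and invoking the extra hypothesis~\eqref{eq:q-u-e.ew} then yields $\wt\energy(J^1 f)\le\energy(f)$ at once. This is precisely the step into which~\eqref{eq:q-u-e.ew} enters, and it is the clean half of the verification.

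Next comes the quantitative norm bound $\normsqr{J^1 f}\ge\normsqr f-\delta_0\normsqr f-\delta_1\energy(f)$. Here I would use that~\eqref{eq:quasi-uni.a} holds with $\delta=0$, so that $J'=J^*$ and $\norm{Jf}\le\norm f$, together with the two modified estimates available in this setting: compatibility $\norm{J^1 f-Jf}\le\delta\energy(f)^{1/2}$ from~\eqref{eq:quasi-uni.c} and the near-inverse property $\norm{f-J'Jf}\le\delta\energy(f)^{1/2}$ from~\eqref{eq:quasi-uni.b}. Decomposing $J^1 f=Jf+(J^1 f-Jf)$ and using $\normsqr{Jf}=\iprod{J'Jf}{f}=\normsqr f-\iprod{f-J'Jf}{f}$, triangle and Cauchy--Schwarz estimates control $\normsqr f-\normsqr{J^1 f}$ by a multiple of $\norm f\,\energy(f)^{1/2}$, and a Young inequality splits this cross term into the required $\delta_0\normsqr f+\delta_1\energy(f)$. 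I expect this to be the main obstacle: the cross term $\norm f\,\energy(f)^{1/2}$ does not split symmetrically, so keeping both constants down to $\delta_0=\delta_1=\delta$ (rather than a larger multiple of $\delta$) demands careful bookkeeping of the several error contributions and the right weight in Young's inequality; one must also confirm the smallness condition $\delta_0+\delta_1\lambda_k<1$, which here reads $\delta(1+\lambda_k)<1$.

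With~\eqref{eq:ew.comp} in hand, \Prp{ew.comp} delivers the first alternative in~\eqref{eq:ew.est} with $\delta_0=\delta_1=\delta$, namely $\wt\lambda_k\le\lambda_k/\bigl(1-\delta(1+\lambda_k)\bigr)$. Finally, to obtain~\eqref{eq:q-u-e.ew.result} I would repeat the whole argument after swapping the two Hilbert spaces: the conditions~\eqref{eq:quasi-uni.a}--\eqref{eq:quasi-uni.c} are symmetric under $J\leftrightarrow J'$ and $J^1\leftrightarrow J^{\prime 1}$ (in particular $J'=J^*$ forces $\norm{J'u}\le\norm u$ as well), while~\eqref{eq:q-u-e.ew'} plays for $J^{\prime 1}$ exactly the role that~\eqref{eq:q-u-e.ew} played for $J^1$, now giving $\energy(J^{\prime 1}u)\le\wt\energy(u)$ via the conjugate-symmetric form of the closeness identity. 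Applying \Prp{ew.comp} with $\energy$ and $\wt\energy$ interchanged then produces the reversed estimate~\eqref{eq:q-u-e.ew.result}.
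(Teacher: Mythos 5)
Your proposal is correct and takes essentially the same route as the paper: the paper likewise reduces the claim to \Prp{ew.comp}, proves $\wt\energy(J^1f)\le\energy(f)$ exactly as you do (closeness with $\delta=0$ plus~\eqref{eq:q-u-e.ew}), proves the norm lower bound using $J'=J^*$, the estimate $\bigabs{\norm{Jf}-\norm f}\le\delta\energy(f)^{1/2}$, the triangle inequality with~\eqref{eq:quasi-uni.c}, and Cauchy--Young, and obtains~\eqref{eq:q-u-e.ew.result} by swapping the roles of $\energy$ and $\wt\energy$ together with~\eqref{eq:q-u-e.ew'}. The bookkeeping you flag as the main obstacle is settled in the paper by factoring $\normsqr{J^1f}-\normsqr f=\bigl(\norm{J^1f}-\norm f\bigr)\bigl(\norm{J^1f}+\norm f\bigr)\ge -2\delta\,\energy(f)^{1/2}\norm f$ and then applying Young's inequality with equal weights, which yields $\delta_0=\delta_1=\delta$.
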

\begin{proof}
  We first estimate
  \begin{equation}
    \label{eq:que-jnorm'}
    \bigabs{\norm{Jf}-\norm f}
    =\frac{\bigabs{\normsqr{Jf}-\normsqr f}}{\norm{Jf}+ \norm f}
    =\frac{\bigabs{\iprod{J^*Jf-f} f}} {\norm{Jf}+ \norm f}
    \leCS \frac{\delta \energy(f)^{1/2}\norm f} {\norm f}
    = \delta \energy(f)^{1/2}
  \end{equation}
  for $f \in \HS^1$ using~\eqref{eq:quasi-uni.b}; this implies
  \begin{equation*}
    \norm{J^1f}-\norm f
    \ge \bigl(\norm{Jf}- \norm f\bigr) - \norm{(J-J^1)f}
    \ge -2\delta \energy(f)^{1/2}
  \end{equation*}
  using the previous estimate and~\eqref{eq:quasi-uni.c}.  We conclude
  that
  \begin{equation*}
    \normsqr{J^1f}-\normsqr f
    = \bigl(\norm{J^1f}-\norm f\bigr)\bigl(\norm{J^1f}+\norm f\bigr)
    \ge -2\delta \energy(f)^{1/2} \norm f
    \geCY -\delta \bigl(\normsqr f + \energy(f)\bigr).
  \end{equation*}
  For the energy forms, we have
  \begin{equation*}
    \wt \energy(J^1f)-\energy(f)
    =\bigl(\wt \energy(J^1f,J^1f)-\energy(f,J^{\prime 1} J^1f)\bigr)
    +\energy(f,J^{\prime 1} J^1f-f)
    \le 0
  \end{equation*}
  using~\eqref{eq:quasi-uni.d} (with $\delta=0$)
  and~\eqref{eq:q-u-e.ew}.  The result follows from \Prp{ew.comp}.

  The second estimate can be seen similarly by swapping $\energy$ and
  $\wt \energy$, using the identification operators in the opposite
  direction and~\eqref{eq:q-u-e.ew'}
\end{proof}

%
\section{
  Post-critically finite fractals and self-similar
  energy forms}
\label{sec:fractals}
%
In this section, we briefly review some facts on fractals and
self-similar energy forms, see~\cite{kigami:01}
or~\cite[Ch.~4]{strichartz:06}.  Let $(X,d)$ be a complete metric
space.  Typically, we work with $X=\R^d$ with the Euclidean metric
(see \Sec{examples}), but our abstract convergence results also hold
in the general case.

\subsection{Post-critically finite self-similar fractals and their
  approximating graphs}
We start with a finite family $F=(F_j)_{j=1,\dots,N}$ of contractive
similarities $\map{F_j}X X$, i.e.,
\begin{equation*}
  d(F_j(x),F_j(y))=\theta_jd(x,y)
\end{equation*}
for all $x,y \in X$, where $\theta_j \in (0,1)$ denotes the
\emph{contraction ratio}.  By the Banach fixed point theorem, such a
similarity has a unique \emph{fixed point} $q_j \in X$.  The family
$F$ is called an \emph{iterated function system (IFS)}.  For any IFS,
there exists a unique (non-empty) compact subset $K \subset X$, called
\emph{self-similar set} or \emph{self-similar fractal}, such that $K$
is self-similar with respect to $F$, i.e., such
that~\eqref{eq:similarities} holds.  The existence is guaranteed by
Banach's fixed point theorem on the space of compact subsets of $X$
with Hausdorff distance (see~\cite[Thm.~1.1.7]{kigami:01}).

For a word $w=w_1\dots w_m$ of length $|w|=m$ over the alphabet
$\{1,\dots,N\}$ we define the map $F_w$ by $F_w=F_{w_1}\circ\dots\circ
F_{w_m}$. We denote by $W_m=\{1,\dots,N\}^m$ the set of all words of
length $m$.  Then there is a natural cell structure on the
self-similar set $K$ described by the map $W_m\ni w\mapsto F_w(K)$ and
we call $K_w:=F_w(K)$ an \emph{$m$-cell}.

\begin{definition}
  \label{def:pcf}
  We say that a self-similar set $K$ is \emph{post-critically finite}
  (or \emph \pcf for short) if $K$ is connected, and if there exists
  a finite set $V_0 \subset K$, called the \emph{boundary} of $K$,
  such that
  \begin{equation*}
    F_w(K) \cap F_{w'}(K) 
    \subset F_w(V_0) \cap F_{w'}(V_0)
  \end{equation*}
  for all words $w, w'$ ($w \ne w'$) of the same length.  Moreover, we
  assume that each boundary point is a fixed point of the IFS, i.e.,
  that $V_0 \subset \{q_1,\dots,q_N\}$.
\end{definition}
In other words, on a \pcf self-similar fractal, its $m$-cells
intersect only in the corresponding $m$-cell boundary points, hence in
a finite set of at most $N$ elements.

For $m=0$, we let $G_0=(V_0,E_0)$ be the complete graph.  For $m \in
\N$, we denote by
\begin{equation*}
  V_m
  := \bigcup_{w \in W_m} F_w(V_0)
\end{equation*}
the union of all \emph{$m$-cell boundary points} and
$V_0$. 
We consider $V_m$ as the vertex set of a graph $G_m$ with edges
\begin{equation*}
  E_m := 
  \bigset{\{x,y\} \subset V_m}
      {\text{$x \ne y$ and there is $w \in W_m$ such that $x,y \in F_w(V_0)$}}.
\end{equation*}
We also write $x \sim_m y$ if $\{x,y\} \in E_m$.  We call $x \in V_m$
a \emph{junction point} if $x$ lies in more than one $m$-cell.  It is
clear that once, the IFS and the graph $G_0=(V_0,E_0)$ at level $m=0$
are given, the sequence of graphs $G_m=(V_m,E_m)$ is recursively
defined.  We call $(G_m)_{m \in \N_0}$ the \emph{approximating graph
  sequence} associated with the self-similar set $K$ and its boundary
$V_0$.

\subsection{Energy forms on the approximating graphs}
Associated with the graph $G_m$ is its \emph{energy}, given by the
quadratic form
\begin{equation}
  \label{eq:graph.energy'}
  \energy_{G_m}(f)
  = \sum_{ \{x,y\} \in E_m} \conductance_{\{x,y\},m} \abssqr{f(x)-f(y)}
\end{equation}
for functions $f \in \lspace{V_m}:=\{\map f {V_m} \C\}$; we specify
the \emph{conductances} $\conductance_{\{x,y\},m}>0$ whenever $x \sim_m y$
later on. 

\begin{definition}[{\cite[Def.~2.2.1]{kigami:01}}]
  \label{def:compatible}
  We say that a sequence $(\energy_{G_m})_{m \in \N_0}$ of energy forms on
  a sequence of graphs $(G_m)_{m \in \N_0}$ is \emph{compatible}, if
  the vertex sets of $G_m$ are nested (i.e., $V_m \subset V_{m+1}$)
  and if
  \begin{equation}
    \label{eq:compatible}
    \energy_{G_m}(\phi)=
    \min\bigset{\energy_{G_{m+1}}(f)}
    {f \in \lspace{V_{m+1}}, \; f \restr {V_m}=\phi}
  \end{equation}
  for all $\phi \in \lspace{V_m}$.  The (unique) minimiser $h_{m+1}
  \in \lspace{V_{m+1}}$ of~\eqref{eq:compatible} is called
  \emph{harmonic extension} of $\phi \in \lspace{V_m}$.
\end{definition}
\begin{remark*}
  The right hand side of~\eqref{eq:compatible} can be seen as a
  Dirichlet-to-Neumann form of the graph $V_{m+1}$ with boundary
  $V_m$; compatibility then means that the $(m+1)$-th
  Dirichlet-to-Neumann form actually agrees with the $m$-th graph
  energy $\energy_{G_m}$ (see e.g.~\cite[Sec.~2.1]{kigami:01}
  or~\cite[Sec.~6.7]{post:16} for this interpretation).
\end{remark*}
For a compatible sequence $(\energy_{G_m})_m$ we can define a limit
form (see~\cite[Sec.~2.2]{kigami:01}).  Let $V_*:=\bigcup_{m \in \N_0}
V_m$ and $u \in \lspace{V_*}$.  As $u \restr{V_{m+1}}$ is an extension
of $u \restr {V_m}$, we have
\begin{equation*}
  \energy_{G_m}(u \restr {V_m})
  \le \energy_{G_{m+1}}(u \restr {V_{m+1}})
\end{equation*}
by~\eqref{eq:compatible}, hence
\begin{equation}
  \label{eq:limit.form}
  \energy_K(u) := \lim_{m \to \infty} \energy_{G_m}(u \restr {V_m})
\end{equation}
exists (and may be $\infty$).

\begin{definition}
  \label{def:forms.self-similar}
  Let $(G_m)_{m \in \N_0 }$ be the approximating sequence of graphs
  associated with the IFS $(F_j)_{j=1,\dots,N}$, the corresponding
  self-similar set $K$ and its boundary $V_0$.  We call a sequence
  $(\energy_{G_m})_{m \in \N_0}$ of energy forms on $(G_m)_m$
  \emph{self-similar} if there exist so-called \emph{renormalisation
    factors} $r_j \in (0,1)$ for $j=1,\dots,N$ such that
  \begin{equation}
    \label{eq:self-similar1}
    \energy_{G_{m+1}}(u)
    =\sum_{j=1}^N r_j^{-1} \energy_{G_m}(u\circ F_j)
  \end{equation}
  for $u \in \lspace{V_{m+1}}$.
\end{definition}
Clearly, a self-similar sequence $(\energy_{G_m})_m$ can be defined
recursively, given $\energy_{G_0}$ on $\lspace{V_0}$
by~\eqref{eq:self-similar1}.  As a result, we then obtain
\begin{equation}
  \label{eq:self-similar2}
  \energy_{G_m}(u)
  = \sum_{w \in W_m} r_w^{-1} \energy_{G_0}(u \circ F_w)
\end{equation}
where $r_w:=r_{w_1}\cdot \ldots \cdot r_{w_m}$.  In particular, the
conductances are given by
\begin{equation}
  \label{eq:tau.est}
  \conductance_{e,m} = \frac {\conductance_{e_0,0}}{r_w} 
  \in \bigl[\conductance_{-,m},\conductance_{+,m} \bigr]
  \quadtext{with}
  \conductance_{\pm,m}
  :=\frac{\conductance_{\pm,0}}{(r_\mp)^m},
\end{equation}
where $w \in W_m$ is given by $e=F_w(e_0)$.  Moreover, $r_\pm$ is the
maximal resp.\ minimal value of the renormalisation factors
$r_1,\dots,r_N$, and similarly $\conductance_{\pm,0}$ is the maximal
resp.\ minimal value of the conductances $\conductance_{e_0,0}$ of
$\energy_{G_0}$.

We now make our main assumption on the fractal:
\begin{definition}
  \label{def:renorm}
  We say that a fractal $K$ is \emph{approximable by finite weighted
    graphs}, if $K$ is a \pcf self-similar set given by an IFS, if
  there is an approximating sequence of finite weighted graphs
  $(G_m)_{m \in \N_0}$, and if there is a \emph{compatible and
    self-similar} sequence $(\energy_{G_m})_m$ of energy forms
  $\energy_{G_m}$ on $G_m$.  We call $V_0$ the \emph{boundary of $K$}.
\end{definition}
On a fractal $K$ approximable by finite weighted graphs, there exists
a natural energy form $\energy_K$ defined by~\eqref{eq:limit.form}.
Note that such an energy form is uniquely determined by the data
$((F_j)_{j=1,\dots,N},(r_j)_{j=1,\dots,N},\energy_{G_0})$.

The \pcf property guarantees the existence of the approximating
sequence of graphs.  Moreover, given an energy form $\energy_{G_0}$
and renormalisation factors $r_1,\dots,r_N$, one can define a
self-similar sequence of energy forms $(\energy_{G_m})_m$
by~\eqref{eq:self-similar2}.

\begin{remark}
  \label{rem:renorm.prob}
  The difficult problem is to find $\energy_{G_0}$ (i.e., conductances
  $(\conductance_{e_0,0})_{e_0 \in E_0}$) and $r_1,\dots,r_N$ such
  that $(\energy_{G_m})_m$ (defined via~\eqref{eq:self-similar2} and
  hence self-similar) is at the same time \emph{compatible}.  This
  problem is called the \emph{renormalisation problem}, and can be
  rephrased as a fixed point problem or a non-linear eigenvalue
  problem.  It can be shown (see e.g.~\cite[Prop.~3.1.3]{kigami:01})
  that $(\energy_{G_m})_m$ defined via~\eqref{eq:self-similar2} is
  compatible if and only if it is compatible at level $m=0$
  (i.e.,~\eqref{eq:compatible} holds for $m=0$).  The renormalisation
  problem is not (yet) solved for general \pcf fractals (see
  e.g.~\cite[Sec.4.2, p.~98f]{strichartz:06}), but there are many
  examples (see \Sec{examples}).
\end{remark}

\subsection{The energy form on the fractal}

Given now a fractal $K$ approximable by finite weighted graphs, we can
define a limit form $\energy_K$ as in~\eqref{eq:limit.form} and the
self-similarity~\eqref{eq:self-similar1} survives the limit:
$\energy_K$ is \emph{self-similar}, i.e.,
\begin{equation}
  \label{eq:self-similar3}
  \energy_K(u)
  =\sum_{j=1}^N r_j^{-1} \energy_K(u\circ F_j)
\end{equation}
holds for all $u \in \lspace{V_*}$.  Let now $\energy_K$
be the form defined in~\eqref{eq:limit.form} on
\begin{equation*}
  \dom \energy_K
  := \set{u \in \Cont K}
  {\energy_K(u):=\lim_{m \to \infty} \energy_{G_m}(u \restr {V_m}) < \infty}.
\end{equation*}
As $V_*=\bigcup_m V_m$ is dense in $K$ and as a continuous function
on the compact set $K$ is uniformly continuous, $u$ is indeed determined
by its values on $V_*$.

The compatibility of the sequence $(\energy_{G_m})_m$ now passes over to
the limit in the following sense (see~\cite[Lem.~2.2.2]{kigami:01}):
for any ``boundary value'' $\phi \in \lspace{V_m}$ there exists a
unique continuous function $h \in \dom \energy_K$ on $K$ such
that $h \restr{V_m}=\phi$ and
\begin{equation}
  \label{eq:harm.ext}
  \energy_{G_m}(\phi)=
  \energy_K(h)=
  \min \bigset {\energy_K(u)} {u \in \dom \energy_K, u \restr{V_m}=\phi}.
\end{equation}
These functions are called \emph{$m$-harmonic functions}. In the
special case where $\phi$ is the characteristic function $\1_x$ of the
set $\{x\}$ for $x\in V_m$ we denote the $m$-harmonic function with
boundary value $\1_x$ by $\psi_{x,m}$.  By polarisation and a simple
argument, it follows for the corresponding sesquilinear forms that
\begin{equation}
  \label{eq:harmonic}
  \energy_K(h, u)
  = \energy_{G_m}(h \restr{V_m}, u \restr{V_m})
\end{equation}
for all $u \in \dom \energy_K$ and $h \in \dom \energy_K$ an $m$-harmonic
function.

From~\eqref{eq:self-similar3} we derive the following ``localisation''
of the energy form $\energy_K$.  For any $w\in W_m$ we define
$\energy_{K_w}$ with $\dom \energy_{K_w}=\set{u \restr{K_w}}{u \in
  \dom \energy_K}$ by
\begin{equation}
  \label{eq:def.local.energy}
  \energy_{K_w}(u \restr{K_w})
  :=r_w^{-1} \energy_K(u \restr{K_w} \circ F_w)
\end{equation}
for all $u \in \dom \energy_K$.
Then we have the following useful ``localisation'' formula
\begin{equation}
  \label{eq:local.energy}
  \energy_K(u) 
  = \sum_{w \in W_m} \energy_{K_w}(u \restr{K_w})
\end{equation}
for all $u \in \dom \energy_K$.

As each cell $K_w$ described by the IFS $\{F_{wj}\}_{j=1,\dots,N}$ is
itself a pcf self-similar set, we conclude from the definition of the
resistance metric (see~\cite[Secs.~2.3]{kigami:01} or~\cite[Sec.~4.4
and~1.6]{strichartz:06}):
\begin{proposition}
  \label{prp:hoelder}
  Let $K$ be a fractal approximable by finite weighted graphs with
  self-similar energy form $\energy_K$ and let $x,y \in K_w$ be in a
  cell $K_w$ of generation $m$.  Then
  \begin{equation}
    \label{eq:hoelder-ineq}
    \bigabssqr{u(x)-u(y)}
    \le \frac {r_w}{\conductance_{-,0}} \energy_{K_w} (u \restr{K_w})
    \le \frac 1 {\conductance_{-,m}} \energy_{K_w} (u \restr{K_w})
  \end{equation}
  for all $u \in \dom \energy_K$.
\end{proposition}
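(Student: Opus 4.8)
The goal is the two-sided estimate
\begin{equation*}
  \bigabssqr{u(x)-u(y)}
  \le \frac{r_w}{\conductance_{-,0}} \energy_{K_w}(u\restr{K_w})
  \le \frac{1}{\conductance_{-,m}} \energy_{K_w}(u\restr{K_w})
\end{equation*}
for $x,y$ in a common $m$-cell $K_w$. The second inequality is immediate from the definition $\conductance_{-,m} = \conductance_{-,0}/(r_+)^m$ together with $r_w \le (r_+)^m$ (since each factor $r_{w_i}\le r_+$), so the real content is the first inequality. My plan is to reduce everything to the generation-zero case by exploiting the self-similar scaling~\eqref{eq:def.local.energy}, and then to prove the base case $\bigabssqr{v(p)-v(q)} \le \conductance_{-,0}^{-1}\energy_K(v)$ for two boundary points $p,q\in V_0$ using the elementary single-edge bound coming from the graph energy~\eqref{eq:graph.energy'} at level $m=0$.

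First I would treat the base case. For $v\in\dom\energy_K$ and any two points $p,q\in V_0$, recall $G_0=(V_0,E_0)$ is the complete graph, so $\{p,q\}\in E_0$ is a genuine edge with conductance $\conductance_{\{p,q\},0}\ge\conductance_{-,0}$. From the monotone limit~\eqref{eq:limit.form} we have $\energy_{G_0}(v\restr{V_0})\le\energy_K(v)$, and the single term of~\eqref{eq:graph.energy'} corresponding to the edge $\{p,q\}$ gives
\begin{equation*}
  \conductance_{-,0}\,\bigabssqr{v(p)-v(q)}
  \le \conductance_{\{p,q\},0}\,\bigabssqr{v(p)-v(q)}
  \le \energy_{G_0}(v\restr{V_0})
  \le \energy_K(v).
\end{equation*}
This handles $x,y\in V_0$. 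To pass to \emph{arbitrary} $x,y\in K$ rather than just boundary points, I would use that $K_{w'}$ for $w'\in W_n$ is itself a pcf self-similar set whose boundary is $F_{w'}(V_0)$, and that $V_*$ is dense; writing any two points as limits of $n$-cell boundary points and iterating the scaling argument below lets the Hölder-type estimate propagate to all of $K$. This density/limit step, invoking the definition of the resistance metric as cited from~\cite{kigami:01,strichartz:06}, is where I expect the genuine subtlety to lie, since one must argue that the appropriate infimum over paths of cells converges and that continuity of $u\in\dom\energy_K$ is enough to control $u(x)-u(y)$ by the cell energies.

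Next I would scale down. Given $x,y\in K_w$ with $|w|=m$, set $\phi := F_w^{-1}$, so that $F_w^{-1}(x),F_w^{-1}(y)\in K$ and the pulled-back function is $u\restr{K_w}\circ F_w\in\dom\energy_K$. Applying the base case (in its all-of-$K$ form) to this pulled-back function yields
\begin{equation*}
  \bigabssqr{u(x)-u(y)}
  = \bigabssqr{(u\restr{K_w}\circ F_w)(F_w^{-1}x)-(u\restr{K_w}\circ F_w)(F_w^{-1}y)}
  \le \frac{1}{\conductance_{-,0}}\,\energy_K(u\restr{K_w}\circ F_w).
\end{equation*}
Now I substitute the localisation identity~\eqref{eq:def.local.energy}, which reads $\energy_{K_w}(u\restr{K_w}) = r_w^{-1}\energy_K(u\restr{K_w}\circ F_w)$, i.e.\ $\energy_K(u\restr{K_w}\circ F_w) = r_w\,\energy_{K_w}(u\restr{K_w})$. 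Plugging this in gives exactly the first claimed inequality, and combining with $r_w\le(r_+)^m$ as noted above completes the proof. The one step requiring care beyond routine bookkeeping is verifying that the base-case estimate genuinely holds for all $x,y\in K$ and not merely for $V_0$; the clean alternative, if one only proves the $V_0$ version, is to reduce $x,y\in K_w$ to a \emph{single deeper cell} in which they are both boundary points and scale from there, at the cost of replacing $r_w$ by the renormalisation factor of that deeper cell — but since we want the sharp constant $r_w$, the density argument via the resistance metric is the natural route.
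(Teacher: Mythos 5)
Your reduction is set up correctly as far as it goes: the second inequality is indeed immediate from~\eqref{eq:tau.est}, the generation-zero base case for $p,q\in V_0$ is correct (using that $G_0$ is complete and that $\energy_{G_0}(v\restr{V_0})\le\energy_K(v)$ by the monotone limit~\eqref{eq:limit.form}), and the rescaling via~\eqref{eq:def.local.energy} correctly shows that the first inequality is \emph{equivalent} to the bound $\bigabssqr{v(x')-v(y')}\le \conductance_{-,0}^{-1}\,\energy_K(v)$ for \emph{all} $x',y'\in K$. The genuine gap is exactly the step you flag and then leave open: passing from pairs in $V_0$ to arbitrary pairs in $K$. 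This is not a routine density-plus-continuity argument --- it is the entire content of the proposition, namely that the resistance diameter of $K$ is at most $1/\conductance_{-,0}$. Density of $V_*$ and continuity of $u$ only reduce the claim to pairs $x,y\in V_n$ with a constant uniform in $n$; but two points of $V_n$ lie in a common cell as boundary points of that cell only exceptionally, and otherwise one must chain through a path of cells. Chaining via the triangle inequality and Cauchy--Schwarz produces the constant $\sum_i r_{w_i}$ summed along the chain, which in general exceeds $1$: already for the \Sierpinski gasket, two points in adjacent $1$-cells give $2r_0=\nicefrac 65$, so this route cannot recover the stated constant. Your fallback --- reducing $x,y\in K_w$ to a common deeper cell in which both are boundary points --- is simply false: if $x$ and $y$ lie in the interiors of two different children of $K_w$, then $K_w$ itself is the smallest cell containing both, and neither is a boundary point of it.

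Two further remarks. First, the weaker statement you actually prove (both points in $F_w(V_0)$) would not suffice downstream: in the proof of \Thm{main} the proposition is applied with one point ranging over all of $K_w$, e.g.\ to $\max_{x,z\in K_w}\abs{u(x)-u(z)}$. Second, the paper itself does not prove the proposition; the sentence preceding it derives the estimate from the theory of the resistance metric, citing \cite[Secs.~2.3]{kigami:01} and \cite[Secs.~4.4 and~1.6]{strichartz:06}: by the definition of the resistance metric one has $\bigabssqr{u(x)-u(y)}\le \resistMet(x,y)\,\energy_K(u)$, that theory supplies the diameter bound $\resistMet\le 1/\conductance_{-,0}$ on $K\times K$, and the scaling~\eqref{eq:def.local.energy} then localises the bound to $K_w$ exactly as in your last step. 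So to complete your proposal you must either invoke that theory as a black box for arbitrary points (as the paper does), or prove the diameter bound yourself, e.g.\ by a uniform-in-$n$ estimate on the effective resistances of the graphs $G_n$; neither is routine bookkeeping.
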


\begin{remark*}
  Note that the localisation of $\energy_K$ in (\ref{eq:local.energy})
  allows us to consider only the energy on $K_w$ instead of $K$ on
  the right hand side.  This slight detail will help us to obtain the
  optimal estimate in the second estimates of~\eqref{eq:quasi-uni.b}
  and~\eqref{eq:quasi-uni.c} in the proof of \Thm{main} in
  \Subsec{approx.fractals.graphs}.
\end{remark*}

\subsection{Measures on fractals and graphs and the associated
  operators}

In order to have a \emph{closed} form $\energy_K$ and in order to
define an associated self-adjoint operator, we need a Hilbert space
structure on $K$, i.e., a measure $\mu$ on $K$.  We assume that $\mu$
is a Borel regular probability measure of full support (see e.g.
~\cite[Sec.~1.4]{kigami:01}).  Examples such as a \emph{self-similar}
measure are given in \Subsec{ex.measure}.  The Hilbert space is then
$\Lsqr{K,\mu}$ with its usual inner product.

The domain $\dom \energy_K$ of $\energy_K$ is itself a Hilbert space,
and it can be shown that the embedding $\dom \energy_K \subset
\Lsqr{K,\mu}$ is compact.  This implies that the operator
$\Delta_{(K,\mu)}$ associated with the quadratic form $\energy_K$,
i.e., the operator defined via
\begin{equation*}
  \energy_K(u,v) = \iprod[\Lsqr{K,\mu}] {\Delta_{(K,\mu)} u} v
\end{equation*}
for all $v \in \dom \energy_K$, has compact resolvent, hence purely
discrete spectrum (note that --- in contrast to some literature like
Kigami's or Strichartz's books~\cite{kigami:01,strichartz:06} --- our
operator is non-\emph{negative}, i.e, $\Delta_{(K,\mu)}\ \ge 0$).
Moreover, it can be shown that $\dom \energy_K \subset \Cont K$, and
it makes sense to evaluate $u \in \dom \energy_K$ at points of $K$.

For the Hilbert space structure on the graphs $G_m$, we also need a
measure $\mu_m$ on $V_m$.  We set
\begin{equation}
  \label{eq:def.mu.m}
  \mu_m(x)=\int_K\psi_{x,m}\dd\mu
\end{equation}
and we call the sequence $(\mu_m)_m$ \emph{approximating measures}
corresponding to $(K,\mu)$.  The measures $\mu_m$ are actually
discrete probability measures of full support because the family of
$m$-harmonic functions $\{\psi_{x,m}\}_{x \in V_m}$ forms a partition
of unity and $\mu$ is a probability measure of full support on $K$.

By $\lsqr{V_m,\mu_m}$ we denote the finite dimensional Hilbert space
$\lspace{V_m}$ with norm (and hence inner product) given by
\begin{equation*}
  \normsqr[\lsqr{V_m,\mu_m}] f
  := \sum_{x \in V_m} \mu_m(x) \abssqr{f(x)}.
\end{equation*}
A simple calculation shows that the (bounded) operator
$\Delta_{(G_m,\mu_m)} \ge 0$ associated with the energy form
$\energy_{G_m}$ acts as
\begin{equation*}
  (\Delta_{(G_m,\mu_m)} f)(x)
  = \frac 1{\mu_m(x)} \sum_{y \sim_m x} \conductance_{\{x,y\},m}
      \bigl(f(x)-f(y)\bigr).
\end{equation*}

%
\section{
  Proof of the quasi-unitary equivalence}
\label{sec:approx.fractals.graphs}
%

\subsection{Quasi-unitary equivalence of graph and fractal energy
  forms}
\label{ssec:approx.fractals.graphs}

We come now to the main part of our article, the proof of \Thm{main}:
Let $\HS_m=\lsqr{V_m,\mu_m}$ and let $\energy_m=\energy_{G_m}$ be the
graph energy form as in~\eqref{eq:graph.energy}
or~\eqref{eq:graph.energy'}.  For the limit space, we set $\wt
\HS=\Lsqr {K,\mu}$ and $\wt \HS^1 = \dom \energy_K$, where $\wt
\energy=\energy_K$ denotes the energy form on the fractal, defined as
a limit in~\eqref{eq:limit.form}.

We will now show our first main result \Thm{main}:
\begin{proof}[Proof of \Thm{main}]
  We define the identification operators $\map{J=J_m}{\HS_m} {\wt
    \HS}$ and $J'=J'_m$ by
  \begin{equation*}
    Jf
    :=\sum_{x\in V_m}f(x) \psi_{x,m} \in \wt \HS,
    \qquad 
    (J'u)(y):=\frac{1}{\mu_m(y)}\iprod {u} {\psi_{y,m}}
  \end{equation*}
  for $y \in V_m$, then it is easily seen that $J^*=J'$.  Moreover, we
  let $J^1f :=Jf$ for $f \in \HS_m$ and let
  \begin{equation*}
    \map{J^{\prime1}}{\wt \HS^1}{\HS_m^1}
    \quadtext{be given by}
    (J^{\prime 1}u)(x)=u(x),
  \end{equation*}
  the evaluation of $u$ in $x \in V_m \subset K$.

  Note that we have $x \sim_m y$ if and only if $\iprod
  {\psi_{x,m}}{\psi_{y,m}}\neq 0$ for $x,y\in V_m$ ($x \ne y$), where
  $\psi_{x,m}$ is the $m$-harmonic function with value $1$ at $x \in
  V_m$ and $0$ elsewhere.  We will make frequent use of the following
  relation,
  \begin{equation}
    \label{eq:trick}
    \mu_m(y) =\int_K\psi_{y,m}\dd\mu
    = \sum_{x \in V_m} \iprod {\psi_{x,m}}{\psi_{y,m}},
  \end{equation}
  which follows from the fact that $\sum_{y \in V_m} \psi_{y,m}=\1$.
  By the Cauchy-Young inequality and~\eqref{eq:trick}, we have
  \begin{align*}
    \normsqr[\Lsqr{K,\mu}] {Jf}
    &= \sum_{x,y \in V_m} f(x)\conj{f(y)} 
        \underbrace{\iprod[\Lsqr{K,\mu}]{\psi_{x,m}}{\psi_{y,m}}}_{\ge 0}\\
    &\leCY \frac 12 
       \sum_{x \in V_m} \abssqr{f(x)} \sum_{y \in V_m} \iprod{\psi_{x,m}}{\psi_{y,m}}
      +   \frac 12 
       \sum_{y \in V_m} \abssqr{f(y)} \sum_{x \in V_m} \iprod{\psi_{x,m}}{\psi_{y,m}}\\
    &= \sum_{x \in V_m} \abssqr{f(x)} \underbrace{\iprod{\psi_{x,m}}{\1}}_{=\mu_m(x)}
     = \normsqr[\lsqr{V_m,\mu_m}] f,
  \end{align*}
  hence we have chosen our vertex weights $\mu_m$ on $V_m$
  appropriately; in particular, the first
  condition~\eqref{eq:quasi-uni.a} of quasi-unitary equivalence is
  fulfilled as well as the second (recall that $J^*=J'$), both with
  $\delta=0$.

  Let us now check the first estimate in~\eqref{eq:quasi-uni.b}: Let
  $y\in V_m$. We apply~\eqref{eq:trick} and obtain
  \begin{align*}
    (f-J'Jf)(y)&
    =\frac{1}{\mu_m(y)}\sum_{x \in V_m}\iprod
    {\psi_{x,m}}{\psi_{y,m}}(f(y)-f(x)).
  \end{align*}
  Hence, for $f\in\HS_m$, by the Cauchy-Schwarz inequality,
  \begin{align*}
    \normsqr[\lsqr{V_m,\mu_m}]{f-J'Jf}
    &=\sum_{y\in V_m}\frac{1}{\mu_m(y)}
      \Bigabssqr{\sum_{x \in V_m} \iprod {\psi_{x,m}}{\psi_{y,m}} (f(y)-f(x))}\\
    &\leCS \sum_{y\in V_m} \frac 1{\mu_m(y)} 
       \bigg(\sum_{x \in V_m}
         \frac{\iprod {\psi_{x,m}}{\psi_{y,m}}^2}{\conductance_{\{x,y\},m}}\bigg)
           \sum_{x \sim_m y} \conductance_{\{x,y\},m} \abssqr{f(y)-f(x)}\\
    &\le \frac 1 {\conductance_{-,m}} 
    \bigg(\max_{y\in V_m}\frac 1{\mu_m(y)}\sum_{x \in V_m}
            \iprod {\psi_{x,m}}{\psi_{y,m}}^2
    \bigg) \energy_m(f)
  \end{align*}
  using~\eqref{eq:tau.est} for the last estimate.  Now
  \begin{equation*}
    \max_{y\in V_m}\frac 1{\mu_m(y)}\sum_{x \in V_m} 
                  \iprod {\psi_{x,m}}{\psi_{y,m}}^2
    \le \max_{y\in V_m}  \sum_{x \in V_m} \iprod {\psi_{x,m}}{\psi_{y,m}}
    = \max_{y \in V_m} \mu_m(y)
    \le \mu_{+,m}
  \end{equation*}
  using $\iprod{\psi_{x,m}}{\psi_{y,m}} \le \iprod{\1}{\psi_{y,m}} =
  \mu_m(y)$ for the first inequality, \eqref{eq:trick} for the
  equality and $\psi_{y,m}\le 1$ for the last inequality.  Moreover,
  we have estimated $\mu_m(y)$ by
  \begin{equation}
    \label{eq:mu.m.pm}
    \mu_{+,m} 
    := \max \bigl\{
    \max_{x \in V_m} \mu_m(x),
    \max_{w \in W_m} \mu(K_w)
    \bigr\}
  \end{equation}
  (the reason for the second term will become clear in a moment).  In
  particular, the first estimate in~\eqref{eq:quasi-uni.b} holds with
  $\delta=(\mu_{+,m}/\conductance_{-,m})^{1/2}$.

  Next we prove the second inequality of~\eqref{eq:quasi-uni.b};
  actually, we will show estimate~\eqref{eq:quasi-uni.b''}: We have
  \begin{equation*}
    u-JJ^{\prime 1}u
    =\sum_{x\in V_m} (u-u(x)) \psi_{x,m}
    =\sum_{w \in W_m} \sum_{x\in F_w(V_0)} (u-u(x)) \psi_{x,m}\restr{K_w}
  \end{equation*}
  almost everywhere for any $u\in\HS$ using the fact that
  $\{\psi_{x,m}\}_{x\in V_m}$ is a partition of unity.
  In particular, we have
  \begin{align*}
    \normsqr[\Lsqr{K,\mu}]{u-JJ^{\prime 1}u}
    &= \sum_{w \in W_m } \normsqr[\Lsqr{K_w}]{u-JJ^{\prime 1}u}\\
    &\le
     \sum_{w \in W_m} \sum_{x,y \in F_w(V_0)} 
       \iprod[\Lsqr{K_w}]{\psi_{x,m}}{\psi_{y,m}} 
              \max_{x,z \in K_w}\abs{u(x)-u(z)}
              \max_{y,z \in K_w}\abs{u(y)-u(z)}\\
    &\le \frac 1{\conductance_{-,m}}  
     \sum_{w \in W_m} \sum_{x,y \in F_w(V_0)} 
       \iprod[\Lsqr{K_w}]{\psi_{x,m}}{\psi_{y,m}} 
              \energy_{K_w}(u \restr{K_w})\\
    &= \frac 1{\conductance_{-,m}}  
     \sum_{w \in W_m} \mu(K_w) \energy_{K_w}(u \restr{K_w})
    \le \frac {\mu_{+,m}}{\conductance_{-,m}}  \wt\energy(u)
  \end{align*}
  using \Prp{hoelder} for the second inequality, $\sum_{x \in
    F_w(V_0)} \psi_{x,m} \restr{K_w}=\1_{K_w}$ for the fourth line and
  (\ref{eq:local.energy}) resp.~\eqref{eq:mu.m.pm} for the final
  inequality.  In particular, we can again choose
  $\delta'=(\mu_{+,m}/\conductance_{-,m})^{1/2}$
  in~\eqref{eq:quasi-uni.b''}.

  For the second estimate in~\eqref{eq:quasi-uni.c} (the first one is
  trivially fulfilled), let $u \in \HS$ and $x\in V_m$.  Then
  \begin{align*}
    (J'u-J^{\prime 1}u)(x)
    =\frac{1}{\mu_m(x)}\bigiprod {u-u(x)\1}{\psi_{x,m}}
    =\sum_{w \in W_{x,m}} \frac 1{\mu_m(x)}
         \bigiprod[\Lsqr{K_w}] {u-u(x)\1}{\psi_{x,m}},
  \end{align*}
  where $W_{x,m}:=\set{w \in W_m}{x \in K_w}$ are the words whose
  $m$-cells contain $x$.  Hence, we have
  \begin{align*}
    \bigabssqr{(J'u-J^{\prime 1}u)(x)}
    &\le \Bigl(
           \sum_{w \in W_{x,m}} \frac 1{\mu_m(x)}
              \int_{K_w} \abs{u-u(x)} \psi_{x,m} \dd \mu
         \Bigr)^2\\
    &\le \frac 1{\conductance_{-,m}}\max_{w \in W_{x,m}}  
            \energy_{K_w}\bigl(u \restr{K_w}\bigr) 
         \Bigl(
           \frac 1{\mu_m(x)} \sum_{w \in W_{x,m}} 
              \int_{K_w} \psi_{x,m} \dd \mu
         \Bigr)^2\\
    &=  \frac 1{\conductance_{-,m}}\max_{w \in W_{x,m}}  
            \energy_{K_w}\bigl(u \restr{K_w}\bigr)
      \le  \frac 1{\conductance_{-,m}}\sum_{w \in W_{x,m}}  
            \energy_{K_w}\bigl(u \restr{K_w}\bigr)
  \end{align*}
  using \Prp{hoelder} for the second inequality, $\sum_{w \in W_{x,m}}
  \int_{K_w} \psi_{x,m} \dd \mu=\mu_m(x)$ for the last line.  Now, we
  obtain for the norm estimate
  \begin{align*}
    \normsqr[\lsqr{V_m,\mu_m}]{J'u-J^{\prime 1}u}
     &\le \frac 1{\conductance_{-,m}} 
       \sum_{x \in V_m} \sum_{w \in W_{x,m}} 
            \energy_{K_w}\bigl(u \restr{K_w}\bigr) \mu_m(x)\\
     &=  \frac 1{\conductance_{-,m}} 
       \sum_{w \in W_m} \energy_{K_w} \bigl(u \restr{K_w}\bigr)
           \sum_{x \in F_w(V_0)}\mu_m(x)\\
     &\le \frac {N_0 \mu_{+,m}} {\conductance_{-,m}} 
       \sum_{w \in W_m} 
         \energy_{K_w}(u \restr{K_w})
     = \frac {N_0\mu_{+,m}}{\conductance_{-,m}} \wt \energy(u)
  \end{align*}
  using~\eqref{eq:mu.m.pm} resp.\ (\ref{eq:local.energy}) for the last line
  (here $N_0=\card{V_0}$ denotes the number of boundary vertices).

  Finally, we check the last condition~\eqref{eq:quasi-uni.d} of
  quasi-unitary equivalence: For any $f=\sum_{x\in V_m}f(x)\psi_{x,m}
  \restr{V_m} \in\HS_m$ and $u\in\HS^1$, we have
  \begin{equation*}
    \energy_m(f,J^{\prime 1}u) - \wt \energy(J^1f,u)
    = \sum_{x\in V_m} f(x) 
         \bigl(\energy_m(\psi_{x,m} \restr{V_m},u \restr{V_m})
              -\wt \energy(\psi_{x,m},u)\bigr)
    =0
  \end{equation*}
  using~\eqref{eq:harmonic}.  We now apply \Lem{quasi-uni.b} and
  obtain~\eqref{eq:quasi-uni.b} with $\deltaA=0$,
  $\delta'=(\mu_{+,m}/\conductance_{-,m})^{1/2}$ and
  $\deltaC=(N_0\mu_{+,m}/\conductance_{-,m})^{1/2}$.  Then
  $\delta=\delta'+\deltaC=(1+\sqrt{N_0})(\mu_{+,m}/\conductance_{-,m})^{1/2}$.
  Collecting all the individual error terms, the quasi-unitary
  equivalence constant is then
  \begin{equation}
    \label{eq:main.delta}
    \delta_m 
    = (1+\sqrt{N_0}) \cdot
        \Bigl(\frac {\mu_{+,m}}{\conductance_{-,m}}\Bigr)^{1/2}
    = \frac{1+\sqrt{N_0}}{\sqrt{\conductance_{-,0}}} \cdot
          (r_+^m \mu_{+,m})^{1/2}.
    \qedhere
  \end{equation}
\end{proof}
Note that for a general probability measure $\mu$ (not necessarily
self-similar), we have at least the estimate $\mu_{+,m}\le 1$ and
hence the following result:

\begin{corollary}
  \label{cor:main}
  The $\delta_m$-quasi-unitary equivalence of the fractal energy form
  $\energy_K$ with general Borel regular probability measure $\mu$ of
  full support and the approximating graph energy forms
  $\energy_m=\energy_{G_m}$ as in \Thm{main} holds with
  \begin{equation*}
    \delta_m 
    = \frac{1+\sqrt{N_0}}{\sqrt{\conductance_{-,0}}} 
      \cdot r_+^{m/2}.
  \end{equation*}
\end{corollary}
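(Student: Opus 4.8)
The plan is to read this off directly from \Thm{main}: the only quantity in the quasi-unitary equivalence constant \eqref{eq:main.delta} that still depends on the (possibly non-self-similar) measure is the factor $\mu_{+,m}$, and I would argue that it is bounded by $1$ in full generality. Recall from \eqref{eq:mu.m.pm} that
\begin{equation*}
  \mu_{+,m}
  = \max\bigl\{ \max_{x \in V_m} \mu_m(x),\ \max_{w \in W_m} \mu(K_w) \bigr\},
\end{equation*}
so it suffices to bound each of the two ingredients by $1$.

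For the cell masses this is immediate: since $\mu$ is a probability measure and $K_w \subset K$, one has $\mu(K_w) \le \mu(K) = 1$ for every word $w \in W_m$. For the vertex weights I would recall from \eqref{eq:def.mu.m} that $\mu_m(x) = \int_K \psi_{x,m} \dd \mu$, where $\psi_{x,m}$ is the $m$-harmonic function with boundary data $\1_x$. The maximum principle for harmonic functions --- equivalently, the fact that $\{\psi_{x,m}\}_{x \in V_m}$ is a partition of unity into nonnegative functions, a property already invoked in the proof of \Thm{main} --- gives $0 \le \psi_{x,m} \le 1$ on $K$, whence $\mu_m(x) = \int_K \psi_{x,m} \dd \mu \le \mu(K) = 1$. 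Combining the two bounds yields $\mu_{+,m} \le 1$.

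The final step will be to substitute $\mu_{+,m} \le 1$ into the second form of \eqref{eq:main.delta}, which gives
\begin{equation*}
  \delta_m
  = \frac{1+\sqrt{N_0}}{\sqrt{\conductance_{-,0}}} \cdot (r_+^m \mu_{+,m})^{1/2}
  \le \frac{1+\sqrt{N_0}}{\sqrt{\conductance_{-,0}}} \cdot r_+^{m/2},
\end{equation*}
exactly as claimed; since $r_+ \in (0,1)$ the decay remains exponential. I expect no genuine obstacle here: the substantive work is already done in \Thm{main}, and the only new observation is that the probability normalisation of $\mu$ controls \emph{both} ingredients of $\mu_{+,m}$ uniformly in $m$. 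Thus the sharper, measure-dependent refinement present in \Thm{main} collapses to this crude but fully general estimate.
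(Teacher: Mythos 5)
Your proposal is correct and is essentially identical to the paper's own argument: the paper disposes of this corollary with the single remark preceding it, namely that for a general probability measure one has $\mu_{+,m}\le 1$, and then substitutes this into~\eqref{eq:main.delta}. Your only addition is to spell out why both ingredients of $\mu_{+,m}$ are bounded by $1$ (monotonicity of $\mu$ for the cell masses, and $0\le\psi_{x,m}\le 1$ from the partition-of-unity property for the vertex weights), which matches the estimates already used inside the proof of \Thm{main}.
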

If the measure is self-similar or if the fractal, its measure and
boundary are symmetric, we can obtain better estimates, see
\Subsec{ex.symmetric}.

\begin{remark}
  \label{rem:fin.ram.frac}
  Our results also extends to certain classes of finitely ramified
  fractals as introduced in~\cite{teplyaev:08}: if there is a
  compatible sequence of energy (or more precisely, resistance) forms
  on the boundary structure $(V_m)_m$.  If the diameter of $m$-cells
  tends to $0$ in the resistance metric, and if one chooses a measure
  such that the measure of $m$-cells tends to $0$, then we obtain a
  similar result as \Thm{main}.
\end{remark}

\begin{remark}
  \label{rem:fem}
  Let us comment on the finite element method (FEM) for fractals
  developed in~\cite{grs:01,asst:01}.  In our notation, one can find
  for example an approximative eigenvalue $\lambda_m$ of an
  eigenvalue $\wt \lambda$ of $\Delta_{(K,\mu)}$ by finding a
  non-trivial solution $f \in \lsqr{V_m,\mu_m}$ (or a vector $f \in
  \C^{V_m}$) of the generalised eigenvalue problem
  \begin{equation*}
    C_m f = \lambda_m G_m f,
  \end{equation*}
  where the matrices $C_m$ and $G_m$ are given by
  \begin{equation*}
    (C_m)_{xy}=\wt \energy(\psi_{x,m},\psi_{y,m})
    =\energy_m(\delta_x,\delta_y)
    =
    \begin{cases}
      \sum_{z \sim_m x} \conductance_{\{x,z\},m}, &\text{if $x = y$}\\
      -\conductance_{\{x,y\},m}, &\text{if $x \sim_m y$}\\
      0, & \text{otherwise}
    \end{cases}
  \end{equation*}
  and
  \begin{equation*}
    (G_m)_{xy}=\iprod[\wt\HS]{\psi_{x,m}}{\psi_{y,m}}.
  \end{equation*}
  The latter matrix is also called the \emph{Gram matrix} of
  $(\psi_{x,m})_{x \in V_m}$.  As a candidate for $f$, one can choose
  e.g.\ evaluation of $u$ at the vertices $V_m$, i.e., $f=J^{\prime 1}
  u$.  Then $f$ is the coefficient vector of the harmonic
  interpolation spline $JJ^{\prime 1}u=\sum_{x \in V_m} f(x)
  \psi_{x,m}$ in our notation.  We show actually as one condition of
  quasi-unitary equivalence that the harmonic spline $Jf=JJ^{\prime
    1}u$ is close to the original $u$ for large $m \in \N$, see
  \Lem{quasi-uni.b}.
\end{remark}

\subsection{A direct eigenvalue convergence result}
\label{ssec:ev.direct}
Although an eigenvalue convergence already follows from quasi-unitary
equivalence as in \Cor{main2}, we show a more explicit convergence
using \Prp{q-u-e.ew}:
\begin{theorem}
  \label{thm:ew.direct}
  Let $(K,\mu)$ be a \pcf fractal with Borel regular probability
  measure of full support approximable by finite weighted graphs
  $(G_m,\mu_m)$ and let $\lambda_k(G_m,\mu_m)$ resp.\ $\lambda_k(K,\mu)$
  be the $k$-th eigenvalue of the Laplacian operator associated with
  $(G_m,\mu_m)$ resp.\ $(K,\mu)$ (in increasing order, repeated
  according to their multiplicity).  Then
  \begin{equation}
    \label{eq:ew.direct1}
    \frac{1-\delta_m}{1+\delta_m\lambda_{k,+}} \cdot
      \lambda_k(G_m,\mu_m)
    \le \lambda_k(K,\mu)
    \le \frac 1 {1-\delta_m(1+\lambda_{k,+})} \cdot 
      \lambda_k(G_m,\mu_m)
  \end{equation}
  provided $m \ge m_0$, where $\delta_m$ is as in \Thm{main}
  resp.~\eqref{eq:main.delta}.  Here, $m_0 \in \N$ is defined by the
  condition $\delta_{m_0} < (1+\lambda_{k,+})^{-1}$, where
  $\lambda_k(G_m,\mu_m)\le \lambda_{k,+}$ is an upper bound on the
  graph eigenvalues.

  If $\delta_m \le (1+\lambda_k(K,\mu))^{-1}/2$, then we can choose as
  upper bound
  \begin{equation}
    \label{eq:ew.direct2}
    \lambda_k(G_m,\mu_m)
    \le 2 \lambda_k(K,\mu) =: \lambda_{k,+}.
  \end{equation}
\end{theorem}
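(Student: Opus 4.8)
The plan is to apply \Prp{q-u-e.ew} with the graph energy form playing the role of $\energy$ and the fractal energy form that of $\wt\energy$, i.e.\ $\energy=\energy_m$, $\wt\energy=\energy_K$, $\lambda_k=\lambda_k(G_m,\mu_m)$, $\wt\lambda_k=\lambda_k(K,\mu)$, and $\delta=\delta_m$ as in~\eqref{eq:main.delta}. The quasi-unitary equivalence in the precise form that proposition requires --- with~\eqref{eq:quasi-uni.a} and~\eqref{eq:quasi-uni.d} holding for $\delta=0$ and with $\energy(\cdot)^{1/2}$, resp.\ $\wt\energy(\cdot)^{1/2}$, on the right of~\eqref{eq:quasi-uni.b} and~\eqref{eq:quasi-uni.c} --- is exactly what the proof of \Thm{main} delivers, for the same operators $J^1=J$ and $J^{\prime1}$ (evaluation at $V_m$). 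So it remains only to verify the two orthogonality hypotheses~\eqref{eq:q-u-e.ew} and~\eqref{eq:q-u-e.ew'} and then to unwind the abstract conclusions.

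Condition~\eqref{eq:q-u-e.ew} is immediate: since $\psi_{x,m}\restr{V_m}=\1_x$, we get $J^{\prime1}J^1f=(Jf)\restr{V_m}=f$, so $J^{\prime1}J^1f-f=0$ and the inequality holds with equality. For~\eqref{eq:q-u-e.ew'} put $h:=J^1J^{\prime1}u=\sum_{x\in V_m}u(x)\psi_{x,m}$, the $m$-harmonic interpolation of $u$, for which $(h-u)\restr{V_m}=0$. Since $h$ is $m$-harmonic, \eqref{eq:harmonic} gives $\energy_K(h,v)=\energy_{G_m}(h\restr{V_m},v\restr{V_m})$ for every $v\in\dom\energy_K$; taking $v=h-u$ yields $\energy_K(h,h-u)=0$, whence $\energy_K(u,h-u)=\energy_K(u-h,h-u)+\energy_K(h,h-u)=-\energy_K(h-u)\le 0$. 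This harmonic orthogonality is the only substantive step; the remainder is bookkeeping with the abstract estimates.

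Now \Prp{q-u-e.ew} yields~\eqref{eq:ew.est} with $\delta_0=\delta_1=\delta_m$ and~\eqref{eq:q-u-e.ew.result}. For the upper bound in~\eqref{eq:ew.direct1} I would start from the first branch of~\eqref{eq:ew.est}, $\lambda_k(K,\mu)\le\lambda_k(G_m,\mu_m)/\bigl(1-\delta_m(1+\lambda_k(G_m,\mu_m))\bigr)$, and enlarge the fraction by replacing $\lambda_k(G_m,\mu_m)$ in the denominator by the larger $\lambda_{k,+}$, using that $t\mapsto 1/(1-\delta_m(1+t))$ is increasing. For the lower bound I would start from the second branch of~\eqref{eq:q-u-e.ew.result}, $\tfrac{1-\delta_m}{1+\delta_m\lambda_k(G_m,\mu_m)}\,\lambda_k(G_m,\mu_m)\le\lambda_k(K,\mu)$, and diminish the prefactor by replacing $\lambda_k(G_m,\mu_m)$ in its denominator by $\lambda_{k,+}$, using that $t\mapsto\tfrac{1-\delta_m}{1+\delta_m t}$ is decreasing while keeping the outer factor $\lambda_k(G_m,\mu_m)$ fixed. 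The hypothesis $m\ge m_0$, i.e.\ $\delta_m\le\delta_{m_0}<(1+\lambda_{k,+})^{-1}$ (legitimate since $\delta_m$ decreases to $0$ in $m$), makes every denominator positive and supplies the admissibility bound $\delta_m(1+\lambda_k(G_m,\mu_m))<1$ needed by \Prp{ew.comp}; one also assumes tacitly $\dim\HS_m\ge k$ so that $\lambda_k(G_m,\mu_m)$ exists.

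Finally, to justify the concrete value~\eqref{eq:ew.direct2} of the a priori bound $\lambda_{k,+}$, I would use the first branch of~\eqref{eq:q-u-e.ew.result}, $\lambda_k(G_m,\mu_m)\le\lambda_k(K,\mu)/\bigl(1-\delta_m(1+\lambda_k(K,\mu))\bigr)$, which controls the graph eigenvalue through the fractal one alone and hence involves no circularity with $\lambda_{k,+}$. Under the standing assumption $\delta_m\le(1+\lambda_k(K,\mu))^{-1}/2$ the denominator is at least $\tfrac12$, so $\lambda_k(G_m,\mu_m)\le 2\lambda_k(K,\mu)$, which is~\eqref{eq:ew.direct2}. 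The main obstacle is thus not any hard estimate but the disciplined choice of the correct ``or''-branch in each of~\eqref{eq:ew.est} and~\eqref{eq:q-u-e.ew.result} together with the monotone replacement of $\lambda_k(G_m,\mu_m)$ by $\lambda_{k,+}$, so as to land precisely on~\eqref{eq:ew.direct1}.
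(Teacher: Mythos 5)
Your proposal is correct and takes essentially the same route as the paper: it applies \Prp{q-u-e.ew} with $\energy=\energy_m$ and $\wt\energy=\energy_K$, notes that \eqref{eq:q-u-e.ew} holds with equality because $J^{\prime1}J^1=\id$ on $\lsqr{V_m,\mu_m}$, verifies \eqref{eq:q-u-e.ew'} from the $m$-harmonicity of $J^1J^{\prime1}u$, and then reads off \eqref{eq:ew.direct1} and \eqref{eq:ew.direct2} from the appropriate branches of \eqref{eq:ew.est} and \eqref{eq:q-u-e.ew.result} together with the monotone replacement of $\lambda_k(G_m,\mu_m)$ by $\lambda_{k,+}$. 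The only immaterial difference is that you establish \eqref{eq:q-u-e.ew'} via the orthogonality $\energy_K(h,h-u)=0$ derived from \eqref{eq:harmonic}, whereas the paper combines \eqref{eq:harmonic} with the minimality property \eqref{eq:harm.ext} --- the same fact in two guises --- and your identification of which inequality yields the upper bound on $\lambda_k(K,\mu)$ (the first branch of \eqref{eq:ew.est}) is in fact stated more precisely than the paper's own cross-reference.
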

\begin{proof}
  We apply \Prp{q-u-e.ew} with $\HS=\lsqr{V_m,\mu_m}$ and $\wt
  \HS=\Lsqr{K,\mu}$ etc.  We note first that~\eqref{eq:q-u-e.ew} is
  trivially fulfilled (even with equality) as
  \begin{equation*}
    (J^{\prime 1}J^1 f)(y)
    =\sum_{x \in V_m}f(x)\psi_{x,m}(y)=f(y).
  \end{equation*}
  In particular, the upper estimate on $\lambda_k(K,\mu)$
  in~\eqref{eq:ew.direct1} holds (using the second inequality
  in~\eqref{eq:q-u-e.ew.result}).

  For the lower bound~\eqref{eq:ew.direct1}, we note
  that~\eqref{eq:q-u-e.ew'} is fulfilled as
  \begin{equation*}
    \wt \energy(u,J^1J^{\prime 1} u)
    = \energy_m (u \restr {V_m}, (J^1J^{\prime 1} u)\restr{V_m})
    = \energy_m (u \restr {V_m}, u \restr{V_m})
    \le \wt \energy(u,u)
  \end{equation*}
  using~\eqref{eq:harmonic} and~\eqref{eq:harm.ext}.

  For the upper bound in~\eqref{eq:ew.direct2} we use the first
  inequality in~\eqref{eq:q-u-e.ew.result}.
\end{proof}


\subsection{Eigenvalue estimates for subsequent graphs}
\label{ssec:ev.graphs}
If the measure $\mu$ is self-similar (see \Subsec{ex.measure}), then
we can also show that the $k$-th eigenvalue of the graph energy forms
$\energy_m$ and $\energy_{m+1}$ are close to each other.  Denote by
$\lambda_k(G_m,\mu_m)$ the $k$-th eigenvalue of the graph Laplacian
$\Delta_m$ associated with $\energy_m$ (in increasing order, repeated
according to the multiplicity).  Blasiak, Strichartz and U{\u
  g}urcan~\cite[text after eq.~(2.9)]{bsu:08} believe that the
sequence $(\lambda_k(G_m,\mu_m))_{m \in \N}$ is monotonely increasing.
We can at least show the following:
\begin{proposition}
  \label{prp:ew.subseq}
  Assume that the measure $\mu$ on $K$ is self-similar, then we have
  \begin{equation*}
    \lambda_k(G_m,\mu_m)
    \le
    \frac 1{1-\dfrac{\lambda_{k,+}}{\lambda_1^\Dir}\cdot (\mu_+r_-)^m}
    \cdot \lambda_k(G_{m+1},\mu_{m+1})
  \end{equation*}
  where $\lambda_1^\Dir>0$ is the smallest eigenvalue of the Dirichlet
  Laplacian (the Laplacian $\Delta_{(G_1,\mu_1)}$ restricted to $V_1\setminus
  V_0$) and where $\lambda_{k,+}$ is an upper bound on
  $\lambda_k(G_{m+1},\mu_{m+1})$.
\end{proposition}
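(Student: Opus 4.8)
The plan is to derive the bound from the min-max comparison of \Prp{ew.comp}, applied to the two graph energy forms at generations $m$ and $m+1$. I would designate generation $m$ as the ``tilde'' side, taking $\wt\HS=\lsqr{V_m,\mu_m}$ and $\wt\energy=\energy_{G_m}$ (so $\wt\lambda_k=\lambda_k(G_m,\mu_m)$), and generation $m+1$ as the untilded side, $\HS=\lsqr{V_{m+1},\mu_{m+1}}$ and $\energy=\energy_{G_{m+1}}$ (so $\lambda_k=\lambda_k(G_{m+1},\mu_{m+1})$). Since $V_m\subset V_{m+1}$, the natural first order identification operator $\map{J^1}{\HS^1}{\wt\HS^1}$ is the restriction $J^1f:=f\restr{V_m}$. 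With these choices the claimed inequality is exactly the first alternative in~\eqref{eq:ew.est} with $\delta_0=0$ and $\delta_1=(\mu_+r_-)^m/\lambda_1^\Dir$, after bounding $\lambda_k(G_{m+1},\mu_{m+1})\le\lambda_{k,+}$ in the denominator; so the task reduces to verifying the two hypotheses~\eqref{eq:ew.comp} with these constants. (Throughout I abbreviate by $\normsqr[\mu_m]{\cdot}$ the norm of $\lsqr{V_m,\mu_m}$.)

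The energy hypothesis $\wt\energy(J^1f)\le\energy(f)$, that is $\energy_{G_m}(f\restr{V_m})\le\energy_{G_{m+1}}(f)$, is immediate from compatibility~\eqref{eq:compatible}, because the left hand side is the minimum of $\energy_{G_{m+1}}$ over all extensions of $f\restr{V_m}$ and $f$ is one such extension. For the norm hypothesis I would write $f=h+g$, where $h$ is the $m$-harmonic extension of $f\restr{V_m}$ and $g:=f-h$ vanishes on $V_m$. Two structural facts then enter. First, the energy orthogonality of the harmonic extension, $\energy_{G_{m+1}}(h,g)=0$, gives $\energy_{G_{m+1}}(f)=\energy_{G_{m+1}}(h)+\energy_{G_{m+1}}(g)$ and in particular $\energy_{G_{m+1}}(g)\le\energy_{G_{m+1}}(f)$. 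Second, since the $m$-harmonic function $\psi_{x,m}$ is also $(m+1)$-harmonic, $\psi_{x,m}=\sum_{y\in V_{m+1}}\psi_{x,m}(y)\psi_{y,m+1}$, whence $\mu_m(x)=\sum_{y\in V_{m+1}}\psi_{x,m}(y)\,\mu_{m+1}(y)$ by~\eqref{eq:def.mu.m}; combined with $h(y)=\sum_{x\in V_m}\psi_{x,m}(y)h(x)$ and convexity this gives the monotonicity $\normsqr[\mu_m]{h\restr{V_m}}\ge\normsqr[\mu_{m+1}]{h}$ for the harmonic part.

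The decisive input, and the only place where self-similarity of $\mu$ is used, is a self-similar Dirichlet--Poincaré inequality for $g$. On each $m$-cell $K_w$ the function $g\circ F_w$ lives on $V_1$ and vanishes on $V_0$, so the smallest Dirichlet eigenvalue gives $\normsqr[\mu_1]{g\circ F_w}\le(\lambda_1^\Dir)^{-1}\energy_{G_1}(g\circ F_w)$. Rescaling by the energy self-similarity~\eqref{eq:self-similar2} and by $\mu_{m+1}(F_w(z))=\mu(K_w)\,\mu_1(z)$ for interior vertices $z$, and summing over $w\in W_m$, I expect
\[
  \normsqr[\mu_{m+1}]{g}
  \le\frac{\max_{w\in W_m}\mu(K_w)\,r_w}{\lambda_1^\Dir}\,\energy_{G_{m+1}}(g)
  \le\frac{\max_{w\in W_m}\mu(K_w)\,r_w}{\lambda_1^\Dir}\,\energy_{G_{m+1}}(f),
\]
the per-cell product $\mu(K_w)\,r_w$ being the geometrically small quantity recorded as $(\mu_+r_-)^m$ in the statement. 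This is the source of the factor $\delta_1=(\mu_+r_-)^m/\lambda_1^\Dir$.

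Assembling the norm difference through $f=h+g$ gives $\normsqr[\mu_{m+1}]{f}-\normsqr[\mu_m]{f\restr{V_m}}=2\Re\iprod[\mu_{m+1}]{h}{g}+\normsqr[\mu_{m+1}]{g}-\bigl(\normsqr[\mu_m]{h\restr{V_m}}-\normsqr[\mu_{m+1}]{h}\bigr)$, in which the last bracket is the nonnegative convexity gap from the monotonicity above and $\normsqr[\mu_{m+1}]{g}$ is already controlled by $\delta_1\energy_{G_{m+1}}(f)$. The hard part will be the cross term $2\Re\iprod[\mu_{m+1}]{h}{g}$: the harmonic part $h$ is only controlled in $L^2$ whereas $g$ is controlled in energy, so a crude Young inequality would introduce a spurious $\delta_0\normsqr[\mu_{m+1}]{f}$ term and spoil the clean denominator. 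I expect it must instead be absorbed cell by cell into the convexity gap of $h$, exploiting that $g$ vanishes on $\bd K_w=F_w(V_0)$; this is where the genuine effort lies. Once the norm hypothesis~\eqref{eq:ew.comp} is secured with $\delta_0=0$ and the above $\delta_1$, \Prp{ew.comp} yields $\lambda_k(G_m,\mu_m)\le(1-\delta_1\lambda_k(G_{m+1},\mu_{m+1}))^{-1}\lambda_k(G_{m+1},\mu_{m+1})$, and replacing $\lambda_k(G_{m+1},\mu_{m+1})$ by $\lambda_{k,+}$ in the denominator gives the asserted estimate.
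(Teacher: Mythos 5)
Your strategy is the same as the paper's: both arguments feed the restriction operator $Ju=u\restr{V_m}$ into \Prp{ew.comp}, both get the energy hypothesis $\energy_{G_m}(u\restr{V_m})\le\energy_{G_{m+1}}(u)$ from compatibility~\eqref{eq:compatible}, and both produce the small factor from the first Dirichlet eigenvalue of a single cell via the self-similar scaling of energy and measure (your cell-wise rescaling of the Rayleigh quotient is exactly~\eqref{eq:subs.dir1}--\eqref{eq:eq:subs.dir2}, and your per-cell quantity $\max_w \mu(K_w)r_w$ is in fact the correct form of it). The place where you stop, however, is a genuine gap, and it cannot be closed in the way you hope. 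The inequality you are aiming at --- the norm hypothesis in~\eqref{eq:ew.comp} with $\delta_0=0$ and some finite $\delta_1$, valid for \emph{all} $f$ --- is false. Take $y_0\in V_{m+1}\setminus V_m$ and $f=\1_{y_0}+c\1$ with $c>0$ large. Then $h=c\1$ and $g=\1_{y_0}$; the convexity gap $\normsqr[\lsqr{V_m,\mu_m}]{h\restr{V_m}}-\normsqr[\lsqr{V_{m+1},\mu_{m+1}}]{h}$ is exactly zero because both $\mu_m$ and $\mu_{m+1}$ are probability measures; the energy $\energy_{m+1}(f)=\sum_{y\sim_{m+1}y_0}\conductance_{\{y_0,y\},m+1}$ does not depend on $c$; but the cross term equals $2c\,\mu_{m+1}(y_0)\to\infty$. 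So there is nothing available to absorb the cross term into, cell by cell or otherwise. Any honest completion must either accept a $\delta_0$-term via Young's inequality --- which yields the proposition with denominator $\bigl(1-\sqrt{\eps_m}\bigr)^2$ instead of $1-\eps_m$, where $\eps_m=\lambda_{k,+}\max_j(r_j\mu_j)^m/\lambda_1^\Dir$ --- or verify the norm bound only on the span of the first $k$ eigenfunctions of $\Delta_{(G_{m+1},\mu_{m+1})}$, which is all the min-max proof of \Prp{ew.comp} actually uses.

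You should also know that the point where you got stuck is precisely the point where the paper's own proof is not sound. The paper avoids your decomposition: it establishes $\normsqr[\lsqr{V_m,\mu_m}]{Ju}-\normsqr[\lsqr{V_{m+1},\mu_{m+1}}]{u}\ge-\normsqr[\lsqr{V_{m+1}\setminus V_m,\mu_{m+1}}]{u}$ (fine, since $\mu_{m+1}(x)\le\mu_m(x)$ for $x\in V_m$), and then bounds $\normsqr[\lsqr{V_{m+1}\setminus V_m,\mu_{m+1}}]{u}$ by $\energy_{m+1}(u)/\lambda_1(\Delta^\Dir_{(G_{m+1},\mu_{m+1})})$ for \emph{arbitrary} $u$ on $V_{m+1}$. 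But the Rayleigh-quotient bound~\eqref{eq:subs.dir1} is valid only for functions vanishing on $V_m$ (extended by zero); for general $u$ it fails --- for $u=\1$ the left-hand side is positive while the energy vanishes, and the function $f=\1_{y_0}+c\1$ above breaks it quantitatively. In other words, the published proof tacitly discards the harmonic part of $u$ on $V_{m+1}\setminus V_m$, which is exactly the cross-term/harmonic-part issue you isolated. So your proposal is not a complete proof, but it is a more careful account of the same approach than the paper gives: your decomposition $f=h+g$, the energy orthogonality, and the Jensen-type monotonicity $\normsqr[\lsqr{V_m,\mu_m}]{h\restr{V_m}}\ge\normsqr[\lsqr{V_{m+1},\mu_{m+1}}]{h}$ are all correct, and the difficulty you flagged is real: the stated estimate can only be salvaged with a weaker constant or by restricting the argument to eigenfunction subspaces.
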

\begin{proof}
  We apply \Prp{ew.comp} with
  \begin{equation*}
    \map{J=J_{m+1,m}}{\HS_{m+1}=\lsqr{V_{m+1},\mu_{m+1}}}
                    {\HS_m=\lsqr{V_m,\mu_m}},
    \qquad
    J u := u \restr {V_m},
  \end{equation*}
  then we have
  \begin{align}
    \nonumber
    \normsqr[\lsqr{V_m,\mu_m}]{Ju} - \normsqr[\lsqr{V_{m+1},\mu_{m+1}}] u
    &= \sum_{x \in V_m} \abssqr{u(x)}(\mu_m(x)-\mu_{m+1}(x))
    - \sum_{x \in V_{m+1}\setminus V_m} \abssqr{u(x)} \mu_{m+1}(x)\\
    \nonumber
    &\ge \normsqr[\lsqr{V_m,\mu_m}] u (1-\mu_-)
    - \normsqr[\lsqr{V_{m+1}\setminus V_m,\mu_{m+1}}] u\\
    \label{eq:subs.ev.norm}
    &\ge - \normsqr[\lsqr{V_{m+1}\setminus V_m,\mu_{m+1}}] u
  \end{align}
  as $\mu_{m+1}(x) \ge \mu_- \mu_m(x)$ for $x \in V_m$
  (using~\eqref{eq:est.mu}).

  We can consider $\Delta_{(G_{m+1},\mu_{m+1})}$ restricted to
  $\lsqr{V_{m+1}\setminus V_m,\mu_{m+1}}$ as \emph{Dirichlet
    Laplacian} (denoted by $\Delta_{(G_{m+1},\mu_{m+1})}^\Dir$) with
  Dirichlet boundary condition on the ``boundary'' $V_m \subset
  V_{m+1}$ (see e.g.~\cite[Sec.~6.7]{post:16}).  We then have (using a
  variant of the min-max characterisation of eigenvalues)
  \begin{equation}
    \label{eq:subs.dir1}
    \lambda_1(\Delta_{(G_{m+1},\mu_{m+1})}^\Dir) 
    \le \frac{\energy_{m+1}(u)}
    {\normsqr[\lsqr{V_{m+1}\setminus V_m,\mu_{m+1}}] u}
  \end{equation}
  for any $\map u {V_{m+1}\setminus V_m} \C$.  Note that a Dirichlet
  eigenvalue is always positive.

  As the Dirichlet Laplacian on $V_{m+1}\setminus V_m$ is a direct sum
  of a rescaled copy of the one on  $V_1 \setminus V_0$, we can
  estimate the first eigenvalue on generation $m+1$ by
  \begin{align}
    \nonumber
    \lambda_1(\Delta_{(G_{m+1},\mu_{m+1})}^\Dir) 
    &= \inf_u \frac{\energy_{m+1}(u)}
      {\normsqr[\lsqr{V_{m+1}\setminus V_m,\mu_{m+1}}] u}\\
    \label{eq:eq:subs.dir2}
    &\ge \frac 1{(r_+\mu_-)^m}\inf_{u_1} \frac{\energy_1(u)}
      {\normsqr[\lsqr{V_1\setminus V_0,\mu_1}] u}
    = \frac 1{(r_+\mu_-)^m} \; 
    \overbrace{\lambda_1(\Delta_{(G_1,\mu_1)}^\Dir)}^{=:\lambda_1^\Dir}
  \end{align}
  using~\eqref{eq:self-similar2}, \eqref{eq:tau.est}
  and~\eqref{eq:self-similar.meas} (as $\mu_-$ is the minimum of the
  measure scaling factors $\mu_j$).  We conclude
  from~\eqref{eq:subs.ev.norm} and the Dirichlet eigenvalue
  estimates~\eqref{eq:subs.dir1}--\eqref{eq:eq:subs.dir2} that
  \begin{equation*}
    \normsqr[\lsqr{V_m,\mu_m}]{Ju} - \normsqr[\lsqr{V_{m+1},\mu_{m+1}}] u
    \ge - \frac 1 {\lambda_1(\Delta_{(G_{m+1},\mu_{m+1})}^\Dir)} \energy_{m+1}(u)
    \ge - \frac{(r_+\mu_-)^m}{\lambda_1^\Dir} \energy_{m+1}(u),
  \end{equation*}
  and we can choose $\delta_1=(r_+\mu_-)^m/\lambda_1^\Dir$ in
  \Prp{ew.comp}.  As $\energy_m(u \restr {V_m}) \le \energy_{m+1}(u)$
  by the compatibility of $(\energy_{G_m})_m$ the condition on the
  energy forms in \Prp{ew.comp} is also fulfilled.  The result now
  follows from \Prp{ew.comp}.
\end{proof}

\begin{remark}
  \label{rem:ew.subseq}
  If the fractal is symmetric with symmetric measure (see
  \Subsec{ex.symmetric}), then $(\mu_+r_-)^m=(r_0/N)^m$, and the
  factor is of the form $1+\Err((r_0/N)^m)$, depending on the a priori
  bound $\lambda_{k,+}$ on $\lambda_k(G_{m+1},\mu_{m+1})$, hence the
  sequence $(\lambda_k(G_m,\mu_m))_m$ is close to a monotonely
  increasing sequence at least for large $m$.
\end{remark}

%
\section{Examples}
\label{sec:examples}
%

In this section we present some classes of examples and some concrete
ones.  Note that the contraction ratio $\theta_j$ does not play any
role in the convergence results.
\subsection{Self-similar and other measures on \pcf fractals}
\label{ssec:ex.measure}

A probability measure $\mu$ on $K$ is called \emph{self-similar}, if
$\mu$ is a Borel regular probability measure and if there are
so-called \emph{measure scaling parameters} $\mu_1,\dots,\mu_N>0$ such
that
\begin{equation}
  \label{eq:self-similar.meas}
  \mu(A)= \sum_{j=1}^N \mu_j \mu(F_j^{-1}(A))
\end{equation}
for all Borel sets $A \subset K$.  Note that a self-similar measure
always exists for any fractal $K$ defined via a (finite) IFS.  In
particular, an $m$-cell $K_w=F_w(K)$ has measure
\begin{equation*}
  \mu(K_w) = \mu_{w_1} \cdot \ldots \cdot \mu_{w_m},
\end{equation*}
hence
\begin{equation}
  \label{eq:est.mu}
  \mu_{m,-} = (\mu_-)^m,
  \qquadtext{where}
  \mu_- := \min\{\mu_1,\dots,\mu_N\} \in (0,1).
\end{equation}

One choice of measure is given by
\begin{equation*}
  \mu_j=r_j^d,
\end{equation*}
where $d$ is defined by $\sum_{j=1}^N r_j^d=1$.  Here, $d$ is the
Hausdorff dimension of $(K,\resistMet)$,
cf.~\cite[Thm.~4.2.1]{kigami:01}, where $\resistMet$ is the
\emph{resistance metric}.  If the
embedding space $X=\R^d$ is the Euclidean space, then we can also
calculate the Hausdorff dimension $d_\eucl$ of $K$ with respect to the
\emph{Euclidean} metric: the number $d_\eucl$ is defined via $\sum_{j=1}^N
\theta_j^{d_\eucl} = 1$ for self-similar sets satisfying the so-called
\emph{open set condition} (see e.g.~\cite[Sec.~1.5]{kigami:01}). Here,
$\theta_j$ is the contraction ratio of the contractive similarity
$\map{F_j}{\R^d}{\R^d}$.

There is another natural measure on a fractal approximable by finite
weighted graphs, namely the \emph{Kusuoka energy measure}.  For a
definition, see e.g.~\cite[Def.~3.4]{teplyaev:08}.  A good reference
and a link to the unusual properties of the domain of the Laplacian on
a fractal can be found in~\cite{bst:99}.  There is even an explicit
formula (cf.~\cite[Thm.~6.1]{teplyaev:08}) for the Laplacian $\Delta_
{(K,\mu)}$, if $\mu$ is the Kusuoka energy measure resembling the
Laplacian of a Riemannian manifold (normalised to
$\mu(K)=1$). \Thm{main} applies also to this setting; one just needs
to calculate $\mu_m(x)=\int_K \psi_{x,m} \dd \mu$ with respect to this
measure.

\subsection{Symmetric \pcf fractals}
\label{ssec:ex.symmetric}

\begin{definition}
  \label{def:frac.symm}
  \indent
  \begin{enumerate}
  \item  
    \label{frac.symm.a}
    We call a self-similar measure $\mu$ \emph{symmetric} or
    \emph{homogeneous} if the measure rescaling parameters are all the
    same, namely
    \begin{equation*}
      \mu_j=\frac 1N
    \end{equation*}
    for $j=1,\dots, N$.
    
  \item 
    \label{frac.symm.b}
    We say that a fractal $K$ approximable by finite weighted graphs
    is \emph{symmetric}, if all contraction ratios $\theta_j$ of the
    IFS are the same and if all energy renormalisation parameters
    $r_j$ of the energy forms are the same, i.e., there exists
    $\theta_0,r_0 \in (0,1)$ such that
    \begin{equation*}
      \theta_0=\theta_j
      \qquadtext{and}
      r_0=r_j, \qquad
    \end{equation*}
    for all $j=1,\dots, N$.
  \item 
    \label{frac.symm.c}
    We say that $(K,\mu)$ is symmetric if $K$ is a symmetric fractal
    together with a symmetric self-similar measure $\mu$.

  \item 
    \label{frac.symm.d}
    Let $V_0 \subset K$ be the boundary of a fractal $K$ approximable
    by finite weighted graphs.  We say that $V_0$ is \emph{symmetric},
    if the weights $\mu_0(x_0)=\int_K \psi_{x_0,0} \dd \mu$ are all
    the same, namely
    \begin{equation*}
      \mu_0(x_0)=\frac 1{N_0}
    \end{equation*}
    for all $x_0 \in V_0$ (recall that $N_0:=\card {V_0} \le N$ and
    that $\sum_{x_0 \in V_0} \mu_0(x_0)=\mu(K)=1$).
  \end{enumerate}
\end{definition}

For a symmetric fractal $K$, the Hausdorff dimension of $K$ with
respect to the resistance metric is given by $d=-\log N/\log r_0>0$,
see e.g.~\cite[Thm.~4.2.1]{kigami:01}.  Moreover, the Hausdorff
dimension of $K$ with respect to the induced metric from the Euclidean
ambient space is $d_\eucl= -\log N/\log \theta_0>0$
(see~\cite[Sec.~1.5]{kigami:01}).

Recall that $W_{x,m}=\set{w \in W_m}{x \in K_w}$ denotes the set of
words $w$ such that the corresponding cells $K_w$ meet in the vertex
$x \in V_m$.  Denote by
\begin{equation*}
  N_1:= \max_{x \in V_m} \card{W_{x,m}}
\end{equation*}
the maximal number of cells meeting in one vertex (note that we have
$N_1 \le N_0 \le N$).  We can now estimate the weights $\mu_m(x)$ and
$\mu_{+,m}$:
\begin{lemma}
  \label{lem:symm.frac}
  Let $K$ be a symmetric fractal and $\mu$ be a symmetric self-similar
  measure.
  \begin{subequations}
    \begin{enumerate}
    \item
      \label{symm.frac.a}
      All $m$-cells $K_w$ have the same measure, namely
      \begin{equation}
        \label{eq:cell.m.symm}
        \mu(K_w) = \frac 1{N^m}
      \end{equation}
      for all $w \in W_m$.
      
    \item
      \label{symm.frac.b}
      The vertex measure $\mu_m(x)$ fulfils
      \begin{equation}
        \label{eq:mu.m.symm}
        \mu_m(x) \le \frac {\card{W_{x,m}}} {N^m}
        \le \frac{N_1}{N^m} \le \frac 1{N^{m-1}}.
      \end{equation}
      
    \item
      \label{symm.frac.c}
      If, in addition, the boundary $V_0$ of $K$ is symmetric, then we
      have
      \begin{equation}
        \label{eq:mu.m.symm'}
        \mu_m(x)=\frac {\card{W_{x,m}}} {N_0 N^m}
        \le \frac{N_1}{N_0 N^m} \le \frac 1{N^m}.
      \end{equation}
    \end{enumerate}
  \end{subequations}
\end{lemma}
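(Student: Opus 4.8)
The plan is to establish the three parts in turn, each relying on the preceding one. For part~\eqref{symm.frac.a} I would iterate the defining relation~\eqref{eq:self-similar.meas} of a self-similar measure to obtain $\mu(K_w)=\mu_{w_1}\cdots\mu_{w_m}$, and then specialise to the symmetric case, where $\mu_j=1/N$ for every $j$ by Definition~\ref{def:frac.symm}\itemref{frac.symm.a}. Since each of the $m$ factors then equals $1/N$, we get $\mu(K_w)=1/N^m$ for every $w\in W_m$, which is~\eqref{eq:cell.m.symm}.

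For part~\eqref{symm.frac.b} I would use that the family $\{\psi_{x,m}\}_{x\in V_m}$ is a partition of unity, so $0\le\psi_{x,m}\le 1$, and that $\psi_{x,m}$ vanishes on every cell $K_w$ not containing $x$: on such a cell all boundary values on $F_w(V_0)$ vanish, so the harmonic extension is identically zero. Starting from $\mu_m(x)=\int_K\psi_{x,m}\dd\mu$ as in~\eqref{eq:def.mu.m}, the integrand is therefore supported on $\bigcup_{w\in W_{x,m}}K_w$, and on each such cell $\int_{K_w}\psi_{x,m}\dd\mu\le\mu(K_w)=1/N^m$ by part~\eqref{symm.frac.a}. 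Summing over the $\card{W_{x,m}}$ cells meeting $x$ gives the first inequality in~\eqref{eq:mu.m.symm}, and the elementary bounds $\card{W_{x,m}}\le N_1\le N_0\le N$ yield the remaining two.

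For part~\eqref{symm.frac.c} the additional ingredient is self-similarity of the harmonic structure. For $w\in W_{x,m}$ the \pcf property forces $x\in F_w(V_0)$, and I would argue that the restriction $\psi_{x,m}\restr{K_w}$ coincides with the transported boundary spline $\psi_{x_0,0}\circ F_w^{-1}$, where $x_0:=F_w^{-1}(x)\in V_0$. I would then evaluate $\int_{K_w}\psi_{x,m}\dd\mu$ by a change of variables: iterating~\eqref{eq:self-similar.meas} in the symmetric case gives $\mu(F_w(A))=N^{-m}\mu(A)$, whence $\int_{K_w}(g\circ F_w^{-1})\dd\mu=N^{-m}\int_K g\dd\mu$ for any integrable $g$. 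Taking $g=\psi_{x_0,0}$ yields $\int_{K_w}\psi_{x,m}\dd\mu=N^{-m}\int_K\psi_{x_0,0}\dd\mu=N^{-m}\mu_0(x_0)=1/(N_0N^m)$, using symmetry of $V_0$ (Definition~\ref{def:frac.symm}\itemref{frac.symm.d}) for the last step. Summing over $w\in W_{x,m}$ produces the equality $\mu_m(x)=\card{W_{x,m}}/(N_0N^m)$ in~\eqref{eq:mu.m.symm'}, and $\card{W_{x,m}}\le N_1\le N_0$ supplies the two inequalities.

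The one genuinely delicate point is the identification $\psi_{x,m}\restr{K_w}=\psi_{x_0,0}\circ F_w^{-1}$ in part~\eqref{symm.frac.c}. It rests on the fact that $m$-harmonicity localises to ordinary harmonicity on each cell and that the local energy $\energy_{K_w}$ is, up to the positive constant $r_w^{-1}$, the transport of $\energy_K$ under $F_w$ (see~\eqref{eq:def.local.energy}), so that energy minimisers with prescribed boundary data correspond under $F_w$. Once this identification and the measure-scaling $\mu(F_w(A))=N^{-m}\mu(A)$ are in place, the remaining computation is a one-line change of variables; everything else is bookkeeping with the bounds $\card{W_{x,m}}\le N_1\le N_0\le N$.
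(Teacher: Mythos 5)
Your proof is correct and takes essentially the same route as the paper's (far terser) one: part (i) by iterating the scaling property~\eqref{eq:self-similar.meas} with $\mu_j=1/N$, part (ii) from $0\le\psi_{x,m}\le 1$ together with $\supp\psi_{x,m}=\bigcup_{w\in W_{x,m}}K_w$, and part (iii) ``again by the scaling property'', which is precisely your identification $\psi_{x,m}\restr{K_w}=\psi_{x_0,0}\circ F_w^{-1}$ combined with the change of variables $\mu(F_w(A))=N^{-m}\mu(A)$. The only point left tacit in both your argument and the paper's is that the self-similar measure does not charge the finitely many points where distinct $m$-cells meet (standard for self-similar measures on \pcf fractals, and already implicit in the paper's exact formula $\mu(K_w)=\mu_{w_1}\cdots\mu_{w_m}$), which is what makes the equalities in (i) and (iii) exact rather than mere inequalities.
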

\begin{proof}
  Eq.~\eqref{eq:cell.m.symm} follows from the scaling property
  ~\eqref{eq:self-similar.meas} $\mu$ with $\mu_j=1/N$.
  Eq.~\eqref{eq:mu.m.symm} is a direct consequence using the estimate
  $\psi_{x,m} \le 1$ and the fact that $\supp \psi_{x,m}=\bigcup_{w
    \in W_{x,m}} K_w$.  Finally, \eqref{eq:mu.m.symm'} follows again
  by the scaling property~\eqref{eq:self-similar.meas}.
\end{proof}

We immediately conclude together with \Thm{main}:
\begin{corollary}
  \label{cor:main'}
  Assume that $K$ is a symmetric fractal with symmetric self-similar
  measure $\mu$, then the $\delta_m$-quasi-unitary equivalence of the
  fractal energy form $\energy_K$ and the approximating graph energy
  forms $\energy_m=\energy_{G_m}$ as in \Thm{main} holds with
  \begin{equation*}
    \delta_m
    =\frac{(1+\sqrt {N_0})\sqrt{N_1}}{\sqrt{\conductance_{-,0}}} \cdot 
    \Bigl(\frac {r_0} N\Bigr)^{m/2}.
  \end{equation*}
  If in addition, the boundary $V_0$ of $K$ is also symmetric, then
  \Thm{main} holds with
  \begin{equation*}
    \delta_m
    =\frac{(1+\sqrt {N_0})\sqrt{N_1}}{\sqrt{\conductance_{-,0} N_0}} \cdot 
    \Bigl(\frac {r_0} N\Bigr)^{m/2}.
  \end{equation*}
\end{corollary}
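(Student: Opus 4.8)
The plan is to derive both constants by substituting the symmetric values of $r_+$ and $\mu_{+,m}$ directly into the general expression~\eqref{eq:main.delta} furnished by \Thm{main}; no genuinely new estimate is needed, only the bounds collected in \Lem{symm.frac}. I would first note that for a symmetric fractal all renormalisation factors coincide, $r_j = r_0$, so that $r_+ = \max_j r_j = r_0$ and, by~\eqref{eq:tau.est}, $\conductance_{-,m} = \conductance_{-,0}\,r_0^{-m}$. Formula~\eqref{eq:main.delta} then reads $\delta_m = \frac{1+\sqrt{N_0}}{\sqrt{\conductance_{-,0}}}\,(r_0^m\mu_{+,m})^{1/2}$, and the entire computation is reduced to bounding the single quantity $\mu_{+,m}$.

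For the first estimate I would read off from \Lem{symm.frac} that every cell satisfies $\mu(K_w) = N^{-m}$ by~\eqref{eq:cell.m.symm}, while $\max_{x \in V_m}\mu_m(x) \le N_1 N^{-m}$ by~\eqref{eq:mu.m.symm}. Because $N_1 \ge 1$, the vertex term dominates the maximum~\eqref{eq:mu.m.pm}, so $\mu_{+,m} \le N_1 N^{-m}$ and hence $r_0^m\mu_{+,m} \le N_1 (r_0/N)^m$. Inserting this into the displayed form of $\delta_m$ gives the first claimed bound $\frac{(1+\sqrt{N_0})\sqrt{N_1}}{\sqrt{\conductance_{-,0}}}(r_0/N)^{m/2}$. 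For the second estimate I would instead use the sharper vertex bound~\eqref{eq:mu.m.symm'}, valid once $V_0$ is also symmetric, namely $\max_{x\in V_m}\mu_m(x) \le N_1/(N_0 N^m)$; substituting $\mu_{+,m} \le N_1/(N_0 N^m)$ produces the additional factor $N_0^{-1/2}$ and the sharper constant $\frac{(1+\sqrt{N_0})\sqrt{N_1}}{\sqrt{\conductance_{-,0}N_0}}(r_0/N)^{m/2}$.

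The step that I expect to require the most care is the evaluation of the maximum in~\eqref{eq:mu.m.pm}, that is, deciding whether the vertex contribution $\max_x \mu_m(x)$ or the cell contribution $\max_w \mu(K_w) = N^{-m}$ dominates. In the first case this is automatic, since $N_1 \ge 1$ makes the vertex term win. In the symmetric-boundary case it is the delicate point, because the refined vertex bound $N_1/(N_0 N^m)$ drops below the cell value $N^{-m}$ precisely when $N_1 < N_0$; one must therefore keep track of where the cell measure actually enters the proof of \Thm{main} --- only through the single $u - JJ^{\prime1}u$ term --- so that the sharper vertex bound may be used in the remaining contributions, which is what the stated constant reflects.
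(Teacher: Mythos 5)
Your derivation of the first constant is correct and coincides with the paper's own argument: for a symmetric fractal $r_+=r_0$ and $\conductance_{-,m}=\conductance_{-,0}r_0^{-m}$ by~\eqref{eq:tau.est}, both entries of the maximum in~\eqref{eq:mu.m.pm} are bounded by $N_1N^{-m}$ (the cell term by~\eqref{eq:cell.m.symm} together with $N_1\ge 1$, the vertex term by~\eqref{eq:mu.m.symm}), and inserting $\mu_{+,m}\le N_1N^{-m}$ into~\eqref{eq:main.delta} gives the first displayed $\delta_m$.

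For the second constant, however, there is a genuine gap --- and in locating it you have actually exposed a defect in the paper's own proof. You are right that $\mu_{+,m}\le N_1/(N_0N^m)$ fails whenever $N_1<N_0$ (e.g.\ for the \Sierpinski gasket, $N_1=2<3=N_0$), because the cell term $\max_w\mu(K_w)=N^{-m}$ in~\eqref{eq:mu.m.pm} then dominates; the paper's proof nevertheless writes $\mu_{+,m}=N_1/(N_0N^m)$, contradicting its own definition. Your proposed repair --- keep the cell measure only where it genuinely enters the proof of \Thm{main}, namely in the $u-JJ^{\prime1}u$ estimate~\eqref{eq:quasi-uni.b''}, and use the sharper vertex bound~\eqref{eq:mu.m.symm'} in the remaining terms --- is the right idea, but it does \emph{not} yield the stated constant. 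Carrying it out, \eqref{eq:quasi-uni.b''} gives $\delta'=\bigl(\max_w\mu(K_w)/\conductance_{-,m}\bigr)^{1/2}=\conductance_{-,0}^{-1/2}(r_0/N)^{m/2}$, and within this method no boundary symmetry can improve it since $\mu(K_w)=N^{-m}$ exactly; the estimate for~\eqref{eq:quasi-uni.c} gives $\deltaC\le\bigl(N_0\cdot\tfrac{N_1}{N_0N^m}\cdot\tfrac{r_0^m}{\conductance_{-,0}}\bigr)^{1/2}=\sqrt{N_1}\,\conductance_{-,0}^{-1/2}(r_0/N)^{m/2}$; and \Lem{quasi-uni.b} combines them into
\begin{equation*}
  \delta_m=\delta'+\deltaC
  =\frac{1+\sqrt{N_1}}{\sqrt{\conductance_{-,0}}}\Bigl(\frac{r_0}{N}\Bigr)^{m/2},
\end{equation*}
whereas the corollary claims $\bigl(\sqrt{N_1/N_0}+\sqrt{N_1}\bigr)\conductance_{-,0}^{-1/2}(r_0/N)^{m/2}$. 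Since $\sqrt{N_1/N_0}<1$ exactly when $N_1<N_0$, the claimed constant is strictly smaller than what your (or any) tracking of the proof of \Thm{main} delivers; reaching it would require $\max_w\mu(K_w)\le N_1/(N_0N^m)$, which is precisely the false inequality you set out to avoid. So your closing assertion that the careful bookkeeping ``is what the stated constant reflects'' does not hold. What you can honestly prove is the corollary with $1+\sqrt{N_1}$ in place of $(1+\sqrt{N_0})\sqrt{N_1/N_0}$ --- still better than the first constant, as $1+\sqrt{N_1}\le(1+\sqrt{N_0})\sqrt{N_1}$ --- while the second statement as printed should be regarded as unsupported by the paper's method (an apparent slip in the paper, which your analysis brings to light).
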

\begin{proof}
  Recall the definition of $\mu_{+,m}$ in~\eqref{eq:mu.m.pm}.  From
  \Lem{symm.frac} we conclude
  \begin{equation*}
    \mu_{+,m} 
    = \frac{N_1}{N^m} 
    \le \frac 1{N^{m-1}}
    \quadtext{resp.}
    \mu_{+,m} 
    = \frac{N_1}{N_0N^m} 
    \le \frac 1{N^m}
  \end{equation*}
  in the case of a non-symmetric resp.\ symmetric boundary, hence the
  result follows from the expression of $\delta_m$
  in~\eqref{eq:main.delta} for \Thm{main}.
\end{proof}

For a symmetric fractal $K$, the number of vertices $\card {V_m}$ is
given recursively by $\card{V_0}=N_0$ and $\card{V_{m+1}}:=N
\card{V_m} - b$, where $b$ is the number of vertices identified from
one generation to the next one.  In particular, we have
\begin{align*}
  \card{V_0}=N_0, \qquad
  \card{V_m}=N^m N_0 - \frac{N^m-1}{N-1} b,
\end{align*}
and $\card {V_m}=N^m N_0 + \Err(N^{m-1})$ if $m \to \infty$, i.e., the
size of the approximating matrices of $\Delta_{(G_m,\mu_m)}$ increases
of order $N^m N_0$ as $m \to \infty$.

\subsection{\Sierpinski gasket and related fractals}
\label{ssec:sierpinski}

\myparagraph{The unit interval.}
The unit interval $K=[0,1]$ can be seen as a self-similar fractal with
$F_1(x)=x/2$ and $F_2(x)=x/2+1/2$ with boundary $V_0=\{0,1\}$.  This
fractal is symmetric, and if we choose the symmetric self-similar
measure $\mu$ (which is here the one-dimensional Lebesgue measure).
In particular, $K$, $\mu$ and $V_0$ are all symmetric and we have
\begin{align*}
  N&=N_0=N_1=2, &
  \theta_0&=\frac 12,&
  r_0&=\frac 12,&
  b&=1,&
  \card {V_m}&=2^m+1.
\end{align*}
Moreover, the conductances at generation $0$ are
$\conductance_{e_0}=1$ for the single edge $e_0$ in $G_0$.  The
approximating graphs $G_m$ are path graphs with $2^m+1$ vertices.  The
error is then given by
\begin{equation*}
  \delta_m
  =(1+\sqrt 2)\cdot 
  \frac 1{2^m}.
\end{equation*}
This convergence rate is quite good, as $\delta_m$ is smaller than
$0.01$ for $m \ge 8$, and the number of vertices is still not too
large, namely $\card {V_8}=2^8+1=257$.

\myparagraph{\Sierpinski gasket.}  The \Sierpinski gasket is given by
three contractions with ratio $\theta_0=1/2$ and fixed points at the
vertices of an equilateral triangle.  Here, $N=N_0=3$ and $N_1=2$.
Again, this fractal is symmetric and we fix the symmetric measure
$\mu$; also the boundary is symmetric.  In particular, we have
\begin{align*}
  N&=N_0=3, &
  N_1&=2, &
  \theta_0&=\frac 12,&
  r_0&=\frac 35,&
  b&=3,&
  \card {V_m}&
  =\frac32\left( 3^m +1\right).
\end{align*}
Moreover, we have $\conductance_{e_0}=1$; and the error is given by
\begin{equation*}
  \delta_m
  =\frac{(1+\sqrt 3)\sqrt 2}{\sqrt 3} \cdot 
  \frac 1 {5^{m/2}}.
\end{equation*}
The error $\delta_m$ is smaller than $0.01$ for $m \ge 7$, but the
number of vertices is then already quite large, namely $\card
{V_7}=3282$.

\myparagraph{\Sierpinski gaskets in higher dimension.}  Here, we
consider the self-similar symmetric set in $\R^{N-1}$ for $N \ge 2$
with contraction ratio $\theta_0=1/2$ and $N$ fixed points lying on an
$N$-dimensional pyramid with side length $1$.  For $N=2$, this is the
interval, for $N=3$ the fractal is the \Sierpinski gasket and for
$N=4$ the \Sierpinski pyramid.  We also have $N_0=N$ and $N_1=2$ (at
most two cells meet).  Also the boundary is symmetric.  We have here
\begin{align*}
  N&=N_0, &
  N_1&=2, &
  \theta_0&=\frac 12,&
  r_0&=\frac N{N+2},&
  b&=\frac {N(N-1)}2,&
  \card {V_m}&
  &=\frac N2 \left(N^m + 1\right).
\end{align*}
Moreover, we have $\conductance_{e_0}=1$; and the error is given by
\begin{equation*}
  \delta_m
  =\frac{(1+\sqrt N)\sqrt 2}{\sqrt N} \cdot \frac 1{(N+2)^{m/2}}.
\end{equation*}
If e.g.\ $N=4$, we have $\delta_m$ smaller than $0.01$ for $m \ge 6$,
but the number of vertices is then already quite large, namely $\card
{V_6}=8194$.

\myparagraph{Related fractals.} The \emph{level $n$ \Sierpinski
  gasket} $SG_n$ is a self-similar compact set in $\R^2$ defined as
follows (see e.g.~\cite[Ex.~4.1.1, Fig.~4.1.1]{strichartz:06}):
subdivide an equilateral triangle into $3(n-1)$ equilateral triangles
of side length $\theta_0=\theta_j=1/n$ of the original side length.
Each of these $3(n-1)$ triangles can be obtained by a similarity from
the original triangle with contraction ratio $1/n$ and a fixed point.
Hence, we have $N=3(n-1)$ fixed points, and only the $3$ fixed points
on the vertices of the original triangle form the boundary, hence
$N_0=3$.  By $D_3$-symmetry, we choose all conductances
$\conductance_{e_0}$ to be equal, say $1$.  The renormalisation
factors are again all the same and given e.g.\ for $n=3$ by
$r_0=\nicefrac7{15}$ (see \cite[Ex.~4.3.1]{strichartz:06}).  The
level $n$ \Sierpinski gasket is an example of a fractal where up to
$3$ cells meet at a junction point (if $n=3$, it is the junction point
in the centre of a cell in the previous generation), hence $N_1=3$.
For the level $3$ \Sierpinski gasket, we have
\begin{align*}
  N&=6, &
  N_0&=N_1=3, &
  \theta_0&=\frac 12,&
  r_0&=\frac 7{15},&
  b&=8,&
  \card {V_m}
  &=\frac75 \cdot 6^m +\frac 85.
\end{align*}
Moreover, we can choose $\conductance_{e_0}=1$ for all three edges of
$G_0$; and the error is given by
\begin{equation*}
  \delta_m
  =\frac{(1+\sqrt 3)\sqrt 3}{\sqrt 3}
  \left(\frac {\nicefrac 7{15}}6\right)^{m/2}
  =(1+\sqrt 3)
  \left(\frac 7{90}\right)^{m/2}.
\end{equation*}
The error $\delta_m$ is smaller than $0.01$ for $m \ge 5$, but the
number of vertices is then already quite large, namely $\card
{V_5}=10888$.

\subsection{Pentagasket}
\label{ssec:pentagasket}

The pentagasket is the self-similar structure with five similarities
of contraction ratio $\theta_0=(3-\sqrt 5)/2\approx 0.382$ and fixed
points located at the vertices of a pentagon (see~\cite{asst:01}
or~\cite[Example~4.3.3 and Exercise~4.3.2]{strichartz:06}).

One can start with all five vertices as $V_0$, hence $G_0$ is the
complete graph $K_5$ with $\card{E_0}=10$ edges (not embeddable in the
plane), cf.~\Fig{pentagasket5}.  Here, we have $N=N_0=5$, and at most
$N_1=2$ cells meet in one vertex.  Moreover,
\begin{align*}
  \theta_0&=\frac {3-\sqrt 5}2 \approx 0.382,&
  r_0&=\frac{\sqrt{161}-9}8 \approx 0.461,&
  b&=5,&
  \card {V_m}
  &=\frac{15}4 \cdot 5^m +\frac 54.
\end{align*}
Moreover, the conductances of the $10$ edges at generation $0$ can be
chosen to be
\begin{equation*}
  \conductance_{e_0}=
  \begin{cases}
    \dfrac{\sqrt{161}-7}{16}\approx 0.356 
         &\text{if $e_0$ is on the pentagon}\\[2ex]
    \dfrac{15-\sqrt{161}}{16}\approx 0.144 
         &\text{if $e_0$ joins every second vertex}
  \end{cases}
\end{equation*}
By symmetry, the five elementary harmonic functions on $K$ all have
the same integral value $\int_K \psi_{x_0,0} \dd \mu =1/5$ for $x_0
\in V_0$, hence $\mu_m(x)=2/5^{m+1}$ if $x$ is a junction point and
$\mu_m(x)=1/5^{m+1}$ if not.  Moreover, any $m$-cells has measure
$1/5^m$, hence $\mu_{+,m}=1/5^m$.

The error is then given by
\begin{equation*}
  \delta_m
  = \frac {(1+\sqrt 5)\sqrt 2}{(\conductance_{-,0})^{1/2}\sqrt 5}
  \cdot \Bigl(\frac{\sqrt{161}-9}{5 \cdot 8}\Bigr)^{m/2}
  \le 5.3848 \cdot 0.3037^m
\end{equation*}
see~\eqref{eq:main.delta}.  Here,
$q:=((\sqrt{161}-9)/40)^{1/2}=0.30366 \ldots \le 0.3037$; so $q^m$ is
smaller than $0.01$ for $m \ge 6$, and the number of vertices is
then $\card{V_6}=58595$.

\subsubsection*{A fractal with non-symmetric boundary}
If we start with only three boundary points $V_0$ (two opposite of the
third on the pentagon), and if one uses the five similarities first
contracting towards the fixed point and then rotating around the
centre of the pentagon by $2\pi/5$, then one obtains another
\begin{figure}[h] 
  \centering
  \includegraphics[width=0.8\textwidth]{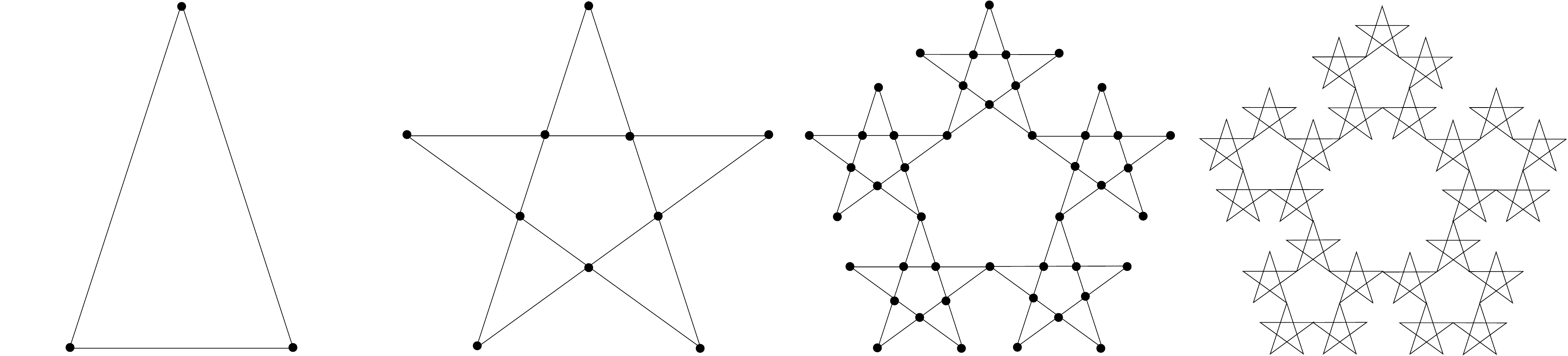}
  \caption{The pentagasket approximation graphs $G_m$ starting with
    only three boundary vertices for $m=0,\dots,3$ with $(\card
    {V_m})_m=(3, 10, 10\cdot 5-5=45, 45\cdot 5-5=220, \dots)$.}
  \label{fig:pentagasket3}
\end{figure}
approximating sequence for the pentagasket, see \Fig{pentagasket3}.
Here
\begin{equation*}
  \conductance_{e_0} =
  \begin{cases}
    \dfrac{\sqrt{161}-7}{14}\approx 0.406
         &\text{if $e_0$ is the base (shorter side) of $G_0$}\\[2ex]
    \dfrac{21-\sqrt{161}}{28}\approx 0.297 
         &\text{if $e_0$ is one of the sides of $G_0$}.
  \end{cases}
\end{equation*}
Note that $G_0$ is an isosceles triangle, hence the fractal has no
longer a symmetric boundary (as the rotational symmetry of $G_0$ is no
longer present).  The renormalisation factor $r_0=(\sqrt{161}-9)/8$ is
the same as above, only $N_0=3$ and
$\card{V_m} 
=\frac74 \cdot 5^m +\frac 54$ changes from the above setting, hence
the error term $\delta_m$ is the same as above, just with the factor
$(1+\sqrt 5)\sqrt 2/((\conductance_{-,0})^{1/2}\sqrt 5) \le 5.3848$
replaced by $(1+\sqrt 3)\sqrt 2/(\conductance_{-,0})^{1/2} \le
7.0917$.  Moreover, $\delta_m$ is smaller than $0.01$ if $m \ge 6$,
and here $\card{V_6}=27345$.

%
%

\newcommand{\etalchar}[1]{$^{#1}$}
\def\cprime{$'$}
\providecommand{\bysame}{\leavevmode\hbox to3em{\hrulefill}\thinspace}
\providecommand{\MR}{\relax\ifhmode\unskip\space\fi MR }
\providecommand{\MRhref}[2]{%
  \href{http://www.ams.org/mathscinet-getitem?mr=#1}{#2}
}
\providecommand{\href}[2]{#2}



\end{document}